\newtheorem{thm}{Theorem}[section]
\newtheorem{lem}[thm]{Lemma}
\newtheorem{example}[thm]{Example}
\newtheorem{defn}[thm]{Definition}
\newtheorem*{thm*}{Theorem}
\newtheorem{rem}[thm]{Remark}
\newtheorem{prop}[thm]{Proposition}
\newtheorem{cor}[thm]{Corollary}
\newtheorem{notation}[thm]{Notation}
\newcommand{\Q}{\mathbb Q}
\newcommand{\Z}{\mathbb Z}
\newcommand{\R}{\mathbb R}
\newcommand{\X}{{\mathcal X}}
\newcommand{\Y}{{\mathcal Y}}
\renewcommand{\O}{{\mathcal O}}
\newcommand{\M}{{\mathcal M}}
\newcommand{\Hom}{{\mathrm{Hom}}}
\newcommand{\Spec}{{\mathrm{Spec}}}
\newcommand{\cok}{{\operatorname{coker }}}
\newcommand{\im}{{\operatorname{im }}}
\newcommand{\cone}{\operatorname{cone }}
\newcommand{\rec}{{\mathrm{rec }}}
\newcommand{\Tor}{{\operatorname{Tor}}}
\newcommand{\et}{{\mathrm{et}}}
\title[On Integral Class Field Theory]{On Integral Class field theory for varieties over $p$-adic fields}
\begin{document}

\date{}
\author{Thomas H.\ Geisser}
\address{
	Department of Mathematics, Rikkyo University, Ikebukuro, Tokyo, Japan
}
\email{geisser@rikkyo.ac.jp}
\author{Baptiste Morin}
\address{
	Department of Mathematics, Universit\'e de Bordeaux,
	Bordeaux, France
}
\email{Baptiste.Morin@math.u-bordeaux.fr}
\thanks{The first named author is supported by JSPS Grant-in-Aid (C) 18K03258,
and the second named author by grant ANR-15-CE40-0002}
\subjclass[2020]{Primary:\ 14G45;\ Secondary:\ 11G25, 14G20, 14F42}
\keywords{Geometric class field theory; Local fields}

\begin{abstract}
Let $K$ be a finite extension of the $p$-adic numbers $\Q_p$ with ring of 
integers $\O_K$, $\X$ a regular scheme, proper, flat, and geometrically irreducible
over $\O_K$ of dimension $d$, and $\X_K$ its generic fiber.
We show, under some assumptions on $\X_K$, that
there is a reciprocity isomorphism of locally compact groups
$H_{ar}^{2d-1}(\X_K,\Z(d))\simeq \pi_1^{ab}(\X_K)_{W}$
from the cohomology theory defined in \cite{Geisser-Morin-21} to an
integral model $\pi_1^{ab}(\X_K)_{W}$ of the abelianized geometric fundamental
groups $\pi_1^{ab}(\X_K)^{geo}$. After removing the contribution from the base 
field, the map becomes an isomorphism of finitely generated abelian groups.
The key ingredient is the duality result in \cite{Geisser-Morin-21}.
\end{abstract}

\maketitle


\section{Introduction}
Let $K$ be a finite extension of the $p$-adic numbers $\Q_p$ with ring of 
integers $\O_K$, $\X$ a regular scheme, proper, flat, and geometrically irreducible
over $\O_K$ of dimension $d$, and $\X_K$ its generic fiber.
Classically, the abelianized fundamental group $\pi_1^{ab}(\X_K)$ was 
studied using the reciprocity map
\begin{equation*}
\rho: SK_1(\X_K)\simeq H_\M^{2d-1}(\X_K,\Z(d))\rightarrow\pi^{ab}_1(\X_K).
\end{equation*}
Removing the contribution from the base field we 
let $SK_1(\X_K)^0$ and $\pi_1^{ab}(\X_K)^{geo}$ be the kernels of the norm map 
$SK_1(\X_K)\rightarrow K^{\times}$ and the natural surjection 
$\pi_1^{ab}(\X_K)\to G_K$ to the Galois group of $K$, respectively. 
If $\X_K$ is a curve, then S.\ Saito \cite{Saito85} showed that $\rho$ 
has a divisible kernel, and a cokernel isomorphic to $\widehat{\Z}^r$, and  
the resulting map
\begin{equation}\label{rec0}
SK_1(\X_K)^0\rightarrow\pi_1^{ab}(\X_K)^{geo}
\end{equation}
has divisible kernel, finite image, and a  cokernel isomorphic to $\widehat{\Z}^r$. 
More generally, Yoshida \cite{Yoshida} showed that in arbitrary dimension 
$\pi_1^{ab}(\X_K)^{geo}$ has finite torsion, and its torsion free quotient
is isomorphic to $\hat \Z^r$, where $r$ is the $K$-rank of the special fiber of
the N\'eron model of the Albanese variety of $\X_K$.
If $\X_K$ is a surface, then Sato \cite{Sato05} gave an example showing
that the kernel of 
$\rho$ need not be divisible. On the other hand, Szamuely \cite{Szamuely} showed that
the kernel of $\rho$ is $l$-divisible if $H^2_\et(\X_K,\Q_l)$ vanishes, 
and Jannsen-Saito \cite{Jannsen-Saito}
showed that it is a direct sum of a finite group and a group divisible by
all primes different from $p$. Forr\'e \cite{Forre} generalized this statement
to arbitrary dimension, and Yamazaki \cite{Yamazaki} 
showed that the kernel of $\rho$ is divisible for a product of curves
all but one of which have potentially good reduction.

In this paper, we study
the abelianized fundamental group $\pi_1^{ab}(\X_K)$ using the cohomology
theory $R\Gamma_{ar}(\X_K,\Z(d))$ consisting of locally compact groups
defined in \cite{Geisser-Morin-21}. 
We recall the construction of the cohomology theory, and define an
integral integral model $\pi_1^{ab}(\X_K)_{W}$ of $\pi_1^{ab}(\X_K)$,
similar to the construction in \cite{Geisser-Schmidt-17}.
Our main result is that the
resulting reciprocity map is an isomorphism under some mild assumptions. 
The main tool is the duality theorem of \cite[Cor.\ 5.13]{Geisser-Morin-21},
which is inspired by the Pontryagin duality \cite[Thm.\ 4.9]{Flach-Morin-17}
of the Weil-Arakelov cohomology defined by Flach and the second named author
in \cite{Flach-Morin-17}. 
The advantage of $R\Gamma_{ar}(-,\Z(n))$ is that it is expected to satisfy a 
duality with integral coefficients, whereas \'etale motivic cohomology with 
integral coefficients does not satisfy duality \cite{Geisser16}.


More precisely, let $\X$ be a regular, connected scheme of Krull dimension $d$, 
which is proper and flat over the ring of integers $\O_K$ of 
$K$. The geometric fibers of $\X/\O_K$ are connected, and we assume that they 
are reduced.
We denote by $\X_K$ and $\X_s$ the generic and special fiber of $\X$, respectively.
Following \cite{Geisser06}, we denote by $\Z^c(0)$ Bloch's cycle complex on $\X_s$,
and by $H_{i}^W(\X_s,\Z)$ the $i$th homology group of the complex 
$R\Gamma_W(\X_s,\Z^c(0))[1]$. Let $\Pi^{ab}_1(\X_{s})$ the abelian enlarged 
fundamental group of the closed fiber, a constant pro-group, let
$\Pi^{ab}_1(\X_{s})_W:= \Pi^{ab}_1(\X_s)\times_{G_{\kappa(s)}}W_{\kappa(s)}$,
and define
$$\pi_1^{ab}(\X_K)_W:= \pi^{ab}_1(\X_{K})\times_{\pi^{ab}_1(\X_{s})} 
\Pi_1^{ab}(\X_{s})_W.$$

\begin{thm}[Thm.\ \ref{ICFT}]
Assume that $\X$ has good or strictly semi-stable reduction and that 
$R\Gamma_W(\X_s,\Z^c(0))$ is a perfect complex of abelian groups. 
Then there exists a unique functorial isomorphism of locally compact groups
\begin{equation}\label{intrecintro}
\underline{\rec}:H^{2d-1}_{ar}(\X_{K},\Z(d))\stackrel{\sim}{ \longrightarrow}
\pi_1^{ab}(\X_K)_{W}
\end{equation}
inducing the classical isomorphism
$$H^{2d-1}_{et}(\X_{K},\widehat{\Z}(d))\stackrel{\sim}{ \longrightarrow} 
\pi_1^{ab}(\X_K)$$
after profinite completion. 
\end{thm}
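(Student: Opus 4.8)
The plan is to obtain $\underline{\rec}$ as the Pontryagin adjoint of the duality pairing of \cite[Cor.\ 5.13]{Geisser-Morin-21}, and then to read off both the isomorphism property and the compatibility with classical reciprocity from that duality. Concretely, \cite[Cor.\ 5.13]{Geisser-Morin-21} provides a perfect pairing of complexes of locally compact abelian groups which, in the relevant degree, exhibits $H^{2d-1}_{ar}(\X_K,\Z(d))$ as the Pontryagin dual of the Weil-arakelov cohomology group $H^1_{ar}(\X_K,\Z(0))$, the pairing being induced by cup product followed by a trace map to $\bt=\R/\Z$. I would first extract this degreewise statement from the complex-level duality, checking that the neighbouring cohomology groups do not interfere; here the hypothesis that $R\Gamma_W(\X_s,\Z^c(0))$ is perfect is what guarantees the requisite finiteness and the absence of higher $\mathrm{Ext}$-terms, so that the dual of $H^{2d-1}_{ar}(\X_K,\Z(d))$ is genuinely concentrated in degree one.

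The core of the argument is then the identification
$$H^1_{ar}(\X_K,\Z(0))\;\simeq\;\Hom_{cont}\bigl(\pi_1^{ab}(\X_K)_W,\bt\bigr),$$
after which Pontryagin double duality yields the desired isomorphism $H^{2d-1}_{ar}(\X_K,\Z(d))\simeq\pi_1^{ab}(\X_K)_W$, and we define $\underline{\rec}$ to be this composite. Classically $H^1_{\et}(\X_K,\Q/\Z)=\Hom\bigl(\pi_1^{ab}(\X_K),\Q/\Z\bigr)$, and the point is that passing from \'etale to Weil-arakelov cohomology replaces the profinite residue-field Galois group $G_{\kappa(s)}\cong\widehat{\Z}$ by the Weil group $W_{\kappa(s)}\cong\Z$ in the base direction — precisely the modification encoded by the fibre products defining $\Pi_1^{ab}(\X_s)_W$ and hence $\pi_1^{ab}(\X_K)_W$ — while the extra continuous part of the character group appears because the arakelov trace takes values in $\bt$ rather than in $\Q/\Z$. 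To make this rigorous I would relate the generic-fibre fundamental group to the special fibre: under good reduction this is smooth proper base change for $\pi_1$, identifying the geometric part of $\pi_1^{ab}(\X_K)$ with that of $\pi_1^{ab}(\X_s)$; under strictly semistable reduction one must instead invoke the comparison furnished by $R\Gamma_W(\X_s,\Z^c(0))$ together with the structure of the nearby cycles, the perfectness hypothesis again ensuring that the resulting groups are finitely generated so that the integral duality is clean.

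Granting the identification, $\underline{\rec}$ is an isomorphism of locally compact groups because the pairing of \cite[Cor.\ 5.13]{Geisser-Morin-21} is perfect. For the asserted compatibility I would check that profinite completion carries $R\Gamma_{ar}(\X_K,\Z(d))$ to $R\Gamma_{\et}(\X_K,\widehat{\Z}(d))$ and carries the arakelov pairing to the classical Poincar\'e--Artin--Verdier pairing; since the profinite completion of $\pi_1^{ab}(\X_K)_W$ is $\pi_1^{ab}(\X_K)$ (the Weil group $\Z$ completing to $G_{\kappa(s)}=\widehat{\Z}$), the completed map is identified with the classical isomorphism $H^{2d-1}_{\et}(\X_K,\widehat{\Z}(d))\simeq\pi_1^{ab}(\X_K)$. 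Uniqueness and functoriality then follow because a continuous homomorphism inducing the classical isomorphism after completion is determined by that completion together with its behaviour on the $\Z$-direction coming from the base, which is pinned down by the degree/trace map; two such maps agree on a dense subgroup and hence coincide.

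The step I expect to be the main obstacle is the identification of the dual side with $\pi_1^{ab}(\X_K)_W$ in the strictly semistable case. In the good reduction case smooth proper base change makes the comparison with the special fibre essentially formal, but under semistable reduction the specialization map on fundamental groups is no longer an isomorphism on the nose, and one must control the discrepancy through the weight-monodromy structure of the nearby cycles while keeping track of the integral (as opposed to rational or mod-$\ell$) information. This is exactly where the perfectness of $R\Gamma_W(\X_s,\Z^c(0))$ is indispensable, as it is what allows the otherwise delicate integral duality to be carried out.
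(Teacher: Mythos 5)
Your proposal follows the same skeleton as the paper's proof: $\underline{\rec}$ is built by composing the duality of \cite[Cor.\ 5.13]{Geisser-Morin-21} with a corepresentability statement for $H^1_{ar}$ and Pontryagin double duality, and uniqueness is deduced from the profinite completion (your closing remark is essentially the paper's argument, which uses that $\pi_1^{ab}(\X_K)_W\rightarrow\pi_1^{ab}(\X_K)$ is injective and that $\mathrm{disc}(-)$ is faithful). However, the step you yourself single out as the core — the identification of the dual side with the character group of $\pi_1^{ab}(\X_K)_W$ — has two genuine problems. First, a coefficient error: the duality exhibits $H^{2d-1}_{ar}(\X_K,\Z(d))$ as $\underline{\Hom}(H^{1}_{ar}(\X_K,\R/\Z),\R/\Z)$, not as the dual of $H^1_{ar}(\X_K,\Z(0))$. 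The group $H^1_{ar}(\X_K,\Z)$ cannot equal $\underline{\Hom}(\pi_1^{ab}(\X_K)_W,\R/\Z)$, since integral $H^1$ misses all torsion and profinite characters; the correct statement (Proposition \ref{pi1corepresent}) concerns $R\Gamma_{ar}(\X_K,\R/\Z):=R\Gamma_{ar}(\X_K,\Z)\underline{\otimes}^L\R/\Z$.

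Second, and more seriously, your proposed route to that identification — smooth proper base change for $\pi_1$ under good reduction, and nearby cycles with weight-monodromy under semistable reduction — would fail integrally. Specialization identifies the fundamental groups of the geometric fibers only on prime-to-$p$ quotients, and the kernel $I^{ab}(\X)$ of $\pi_1^{ab}(\X_K)\rightarrow\pi_1^{ab}(\X)$ is nonzero already for $\X=\Spec(\O_K)$; no comparison of $\pi_1(\X_K)$ with $\pi_1(\X_s)$ of this kind is available or needed. The paper instead exploits the fiber-product definition $\pi_1^{ab}(\X_K)_W=\pi^{ab}_1(\X_{K})\times_{\pi^{ab}_1(\X_{s})}\Pi_1^{ab}(\X_{s})_W$: one dualizes the strictly exact sequence
$$0\rightarrow \pi^{ab}_1(\X_K)_{W}\rightarrow \pi^{ab}_1(\X_K)\oplus \Pi^{ab}_1(\X_s)_{W}\rightarrow \pi^{ab}_1(\X)\rightarrow 0$$
and matches it against the exact sequence coming from the push-out square relating $R\Gamma_{et}(\X,\R/\Z)$, $R\Gamma_{et}(\X_K,\R/\Z)$, $R\Gamma_{ar}(\X_s,\R/\Z)$ and $R\Gamma_{ar}(\X_K,\R/\Z)$; the inputs are proper base change with finite coefficients, the isomorphism $\Hom(\Pi^{ab}_1(\X_s)_{W},A)\simeq H^1_W(\X_s,A)$ of Geisser--Schmidt, and the comparison $H^1_W(\X_s,\R/\Z)\simeq H^1_{Wh}(\X_s,\R/\Z)$ (Proposition \ref{sameH1}). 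Relatedly, the perfectness of $R\Gamma_W(\X_s,\Z^c(0))$ is not used to control $\pi_1$ in the semistable case: its role is to place $R\Gamma_{ar}(\X_K,\Z(d))$ in $\mathbf{D}^b(\mathrm{FLCA})$ so that the duality of \cite[Cor.\ 5.13]{Geisser-Morin-21} applies, while on the weight-$0$ side perfectness comes from the strict normal crossing hypothesis (or resolution of singularities).
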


For $\X=\O_K$, the map $\underline{\rec}$ is the isomorphism
$$K^{\times}=H^{1}_{ar}(\X_{K},\Z(1))\stackrel{\sim}{ \longrightarrow} \pi_1^{ab}(\X_K)_{W}=W_K^{ab}$$
of Weil's local class field theory, where $W_K$ is the Weil group of $K$.
The Theorem can be refined as follows.
Setting $\pi_1^{ab}(\X_K)^{geo}_W$ and $H_{ar}^{2d-1}(\X_K,\Z(d))^0$ 
to be the kernel of the map
$\pi_1^{ab}(\X_K)_{W}\rightarrow W_K^{ab}$ and 
$H_{ar}^{2d-1}(\X_K,\Z(d))^0\rightarrow K^{\times}$, respectively, we obtain:

\begin{cor}[Cor.\ \ref{ICFT0}]
The reciprocity map  \eqref{intrecintro} induces an isomorphism of 
{\it finitely generated abelian groups}
$$rec: H_{ar}^{2d-1}(\X_K,\Z(d))^0\stackrel{\sim}{\longrightarrow} 
\pi_1^{ab}(\X_K)^{geo}_W$$
of rank 
\begin{equation*}
r= \mathrm{rank}_{\Z}(H_{ar}^{2d-1}(\X_K,\Z(d))^0)=
\mathrm{rank}_{\Z}(H_1^W(\X_s,\Z))-1.
\end{equation*}
\end{cor}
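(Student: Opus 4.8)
The plan is to derive the Corollary from Theorem~\ref{ICFT} by restricting the reciprocity isomorphism $\underline{\rec}$ to the ``geometric'' parts and then computing the common free rank through the Weil-\'etale homology of the special fibre.

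First I would record the compatibility of $\underline{\rec}$ with the proper structure morphism $\X\to\Spec\O_K$. By covariant functoriality (pushforward) of the reciprocity map along this map, the square
\[
\begin{CD}
H^{2d-1}_{ar}(\X_{K},\Z(d)) @>{\underline{\rec}}>> \pi_1^{ab}(\X_K)_{W}\\
@VVV @VVV\\
K^{\times}=H^{1}_{ar}(\Spec K,\Z(1)) @>{\sim}>> W_K^{ab}
\end{CD}
\]
commutes, the bottom isomorphism being the case $\X=\O_K$ of the Theorem, i.e.\ Weil's local class field theory. Since $\underline{\rec}$ and the bottom arrow are isomorphisms, the kernel of the left vertical map is carried isomorphically onto the kernel of the right vertical map; this is exactly the asserted isomorphism $rec\colon H^{2d-1}_{ar}(\X_K,\Z(d))^0\stackrel{\sim}{\longrightarrow}\pi_1^{ab}(\X_K)^{geo}_W$. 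Uniqueness and functoriality are inherited from those of $\underline{\rec}$.

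Next I would prove finite generation. The hypothesis that $R\Gamma_W(\X_s,\Z^c(0))$ is perfect forces every $H_i^W(\X_s,\Z)$, in particular $H_1^W(\X_s,\Z)$, to be a finitely generated abelian group. Because $rec$ is an isomorphism it suffices to see finiteness for the target. Unwinding $\pi_1^{ab}(\X_K)_{W}=\pi^{ab}_1(\X_{K})\times_{\pi^{ab}_1(\X_{s})}\Pi_1^{ab}(\X_{s})_W$, the specialization map $\pi_1^{ab}(\X_K)\to\pi_1^{ab}(\X_s)\simeq\pi_1^{ab}(\X)$ is surjective, and forming the fibre product with $\Pi_1^{ab}(\X_s)_W=\Pi_1^{ab}(\X_s)\times_{G_{\kappa(s)}}W_{\kappa(s)}$ replaces the profinite $G_{\kappa(s)}=\widehat{\Z}$ by the discrete $W_{\kappa(s)}=\Z$. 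Using integral Weil-\'etale reciprocity for the proper variety $\X_s/\kappa(s)$, which identifies $\Pi_1^{ab}(\X_s)_W$ with $H_1^W(\X_s,\Z)$, one sees that $\pi_1^{ab}(\X_K)^{geo}_W$ is governed by this finitely generated group and is itself finitely generated.

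It remains to compute the rank. On the one hand, the Theorem induces the classical isomorphism after profinite completion, so the profinite completion of the finitely generated group $\pi_1^{ab}(\X_K)^{geo}_W$ is $\pi_1^{ab}(\X_K)^{geo}$, whose torsion-free quotient is $\widehat{\Z}^{\,r}$ by \cite{Yoshida}; hence the free rank of $\pi_1^{ab}(\X_K)^{geo}_W$ is $r$. On the other hand, over the finite residue field the Weil-\'etale $1-\phi$ exact sequence exhibits a free summand of $H_1^W(\X_s,\Z)$ of rank one coming from $W_{\kappa(s)}=\Z$ (equivalently from $H_0^W(\X_s,\Z)=\Z$), which is precisely the contribution of the base removed in passing from $\Pi_1^{ab}(\X_s)_W$ to its geometric kernel. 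Matching this with the fibre-product description gives $\mathrm{rank}_{\Z}(\pi_1^{ab}(\X_K)^{geo}_W)=\mathrm{rank}_{\Z}(H_1^W(\X_s,\Z))-1$, which together with the previous sentence yields the displayed equalities. The main obstacle is this integral identification: one must align Weil-\'etale reciprocity for $\X_s/\kappa(s)$ with the specialization isomorphism for the arithmetic fundamental group \emph{integrally}, not merely after profinite completion, and then isolate the single rank-one summand attached to the residue field so that the bookkeeping produces exactly $\mathrm{rank}\,H_1^W(\X_s,\Z)-1$ rather than an off-by-one discrepancy; perfectness is what guarantees that all groups in sight are finitely generated and makes this rank count meaningful.
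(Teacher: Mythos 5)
Your first step coincides with the paper's: functoriality of $\underline{\rec}$ along the proper map $\X\to\mathrm{Spec}(\O_K)$ identifies the kernel of $H^{2d-1}_{ar}(\X_K,\Z(d))\to K^{\times}$ with the kernel of $\pi_1^{ab}(\X_K)_W\to W_K^{ab}$. The genuine gap is in your finite generation argument. You try to read finiteness off the target, arguing that the fibre product replaces $G_{\kappa(s)}=\widehat{\Z}$ by $W_{\kappa(s)}=\Z$ and that $\pi_1^{ab}(\X_K)^{geo}_W$ is therefore ``governed by'' the finitely generated group $H_1^W(\X_s,\Z)$. But the fibre product only discretizes the unramified \emph{quotient}: by the paper's own diagram, $\pi_1^{ab}(\X_K)_W$ is an extension of the finitely generated group $\Pi_1^{ab}(\X_s)_W$ by the full profinite inertia $I^{ab}(\X)$, so $\pi_1^{ab}(\X_K)^{geo}_W$ contains the geometric inertia $\ker\bigl(I^{ab}(\X)\to \O_K^{\times}\bigr)$, a profinite group. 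Its finiteness is a serious arithmetic input and does not follow from perfectness of $R\Gamma_W(\X_s,\Z^c(0))$ or from any bookkeeping with the special fibre. In the paper this is exactly Proposition \ref{propkey} (finiteness of $H^{2d-1}_{ar}(\X,\Z(d))^0\simeq H^{2d-1}_{et}(\X,\widehat{\Z}(d))^0$), proved via the Bloch--Kato Selmer group computation $H^{2d-1}_{et}(\X,\Q_p(d))\simeq H^1_f(G_K,H^{2d-2}(\X_{\bar K},\Q_p(d)))$ from \cite{Geisser-Morin-21} together with $\ell$-adic finiteness; your proposal never supplies this, and this is also why the good/strictly semi-stable hypothesis enters.

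The rank computation also drifts from what is needed. The paper works entirely on the source: the localization diagram of Proposition \ref{exact-sequences} gives an exact sequence
$$H_2^W(\X_s,\Z)\rightarrow H^{2d-1}_{ar}(\X,\Z(d))^0\rightarrow H^{2d-1}_{ar}(\X_K,\Z(d))^0\stackrel{f}{\rightarrow} H_1^W(\X_s,\Z)^0$$
with $\cok(f)$ finite, so Proposition \ref{propkey} yields both finite generation and $\mathrm{rank}_{\Z}H^{2d-1}_{ar}(\X_K,\Z(d))^0=\mathrm{rank}_{\Z}H_1^W(\X_s,\Z)^0$; the $-1$ then comes from the finite cokernel of the degree map $H_1^W(\X_s,\Z)\to H_1^W(s,\Z)\simeq\Z$, seen by pushing forward from a closed point of $\X_s$. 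Your route instead quotes Yoshida for $\mathrm{rank}=r$ (the N\'eron model rank) and separately asserts an integral identification $\Pi_1^{ab}(\X_s)_W\simeq H_1^W(\X_s,\Z)$; but the Corollary asserts $\mathrm{rank}=\mathrm{rank}_{\Z}H_1^W(\X_s,\Z)-1$, and equating this with Yoshida's $r$ is precisely the unproved comparison you yourself flag as ``the main obstacle.'' The paper never uses (and does not have) such a homological reciprocity isomorphism for $\X_s$; it only uses the cohomological statement $\Hom(\Pi_1^{ab}(\X_s)_W,A)\simeq H^1_W(\X_s,A)$ from \cite[Prop.\ 2.4]{Geisser-Schmidt-17}. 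Finally, your appeal to profinite completion of $\pi_1^{ab}(\X_K)^{geo}_W$ presupposes the finite generation not yet established, so that part of the argument is circular as written.
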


In the last three sections of this paper, we compare our reciprocity map 
with the classical reciprocity map  \eqref{rec0}. 
For $\X$ of arbitrary dimension, we obtain some results on the kernel 
of $\rho$ in Sections $4$ and $5$. In Section 6 we specialize these
results to obtain the following:

\begin{thm}[Thm.\ \ref{thmreccurve}]
If $\X_K$ is a curve with good or strictly semi-stable reduction, then 
\eqref{rec0} factors as follows:
$$SK_1(\X_K)^0\to  
H_{ar}^3(\X_K,\Z(2))^0 \stackrel{\sim}{\longrightarrow}  
\pi_1^{ab}(\X_K)_W^{geo}\rightarrow  \pi_1^{ab}(\X_K)^{geo}.$$
The left map 
has kernel the maximal divisible subgroup of $SK_1(\X_K)^0$ and its image is 
the torsion subgroup of the finitely generated abelian group 
$H_{ar}^3(\X_K,\Z(2))^0$.
The right map is the inclusion of the finitely generated group 
$\pi_1^{ab}(\X_K)_W^{geo}$ into its profinite completion $\pi_1^{ab}(\X_K)^{geo}$. 
\end{thm}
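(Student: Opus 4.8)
The plan is to construct the asserted factorization from the comparison map between motivic and Arakelov cohomology, to identify the middle isomorphism with Corollary \ref{ICFT0}, and then to read off the kernel and image of the left-hand map from S.\ Saito's structural results on $\rho$. Specializing to $d=2$, so that $SK_1(\X_K)\simeq H^3_\M(\X_K,\Z(2))$, I would first produce the natural comparison map $H^3_\M(\X_K,\Z(2))\to H^3_{ar}(\X_K,\Z(2))$ arising from the construction of $R\Gamma_{ar}(\X_K,\Z(2))$ in \cite{Geisser-Morin-21}, and check that it is compatible with the maps recording the contribution of the base field, so that it induces $SK_1(\X_K)^0\to H^3_{ar}(\X_K,\Z(2))^0$ on the degree-zero parts. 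Composing with the isomorphism of Corollary \ref{ICFT0} and the canonical map $\pi_1^{ab}(\X_K)^{geo}_W\to\pi_1^{ab}(\X_K)^{geo}$ yields a map $SK_1(\X_K)^0\to\pi_1^{ab}(\X_K)^{geo}$, and the crucial point is that this composite agrees with the classical reciprocity map \eqref{rec0}. I would deduce this from Theorem \ref{ICFT}, which asserts that $\underline{\rec}$ recovers the classical isomorphism $H^{3}_{et}(\X_K,\widehat{\Z}(2))\simeq\pi_1^{ab}(\X_K)$ after profinite completion, combined with the compatibility of the motivic-to-Arakelov comparison with the motivic cycle class map into \'etale cohomology through which $\rho$ is defined.

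The right-hand map is the inclusion into the profinite completion: by Corollary \ref{ICFT0} the group $\pi_1^{ab}(\X_K)^{geo}_W$ is finitely generated of rank $r$, and its profinite completion is $\pi_1^{ab}(\X_K)^{geo}$, by the fiber-product definition of $\pi_1^{ab}(\X_K)_W$ together with the comparison between the Weil and Galois fundamental groups of the special fiber. In particular this map is injective, and hence so is the composite of the middle isomorphism with it. Consequently the kernel of $SK_1(\X_K)^0\to H^3_{ar}(\X_K,\Z(2))^0$ equals the kernel of \eqref{rec0}, which by \cite{Saito85} is the maximal divisible subgroup of $SK_1(\X_K)^0$.

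For the image I would argue as follows. By \cite{Saito85} the map \eqref{rec0} has finite image and cokernel isomorphic to $\widehat{\Z}^r$. A finite subgroup of the finitely generated group $\pi_1^{ab}(\X_K)^{geo}_W$ is contained in its torsion subgroup $T$, which under the middle isomorphism corresponds to the torsion subgroup of $H^3_{ar}(\X_K,\Z(2))^0$; moreover the torsion of $\pi_1^{ab}(\X_K)^{geo}_W$ coincides with that of its profinite completion $\pi_1^{ab}(\X_K)^{geo}$. Since the cokernel of \eqref{rec0} is torsion-free, the image cannot be a proper subgroup of $T$---otherwise $T$ would contribute nonzero torsion to the cokernel---so the image is exactly $T$, the torsion subgroup of $H^3_{ar}(\X_K,\Z(2))^0$.

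I expect the main obstacle to be the compatibility in the first paragraph, namely that the composite through $H^3_{ar}(\X_K,\Z(2))^0$ and the profinite completion reproduces the classical map \eqref{rec0}. This requires matching two a priori distinct constructions of the reciprocity map---the cycle-theoretic one underlying $\rho$ and the duality-theoretic one underlying $\underline{\rec}$---by tracing the motivic-to-Arakelov comparison through the duality of \cite[Cor.\ 5.13]{Geisser-Morin-21} and checking agreement with the \'etale cycle class map after profinite completion. Once this identification is in place, the remaining assertions are formal consequences of Saito's structure theorem and the finite generation furnished by Corollary \ref{ICFT0}.
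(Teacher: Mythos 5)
Your overall architecture --- the comparison map $H^3_\M(\X_K,\Z(2))\to H^3_{ar}(\X_K,\Z(2))$, the isomorphism of Corollary \ref{ICFT0}, the inclusion into $\pi_1^{ab}(\X_K)^{geo}$, and compatibility with $\rho$ via the profinite-completion clause of Theorem \ref{ICFT} --- coincides with the paper's, but your identification of kernel and image takes a genuinely different route: you import Saito's structure theorem for \eqref{rec0} (divisible kernel, finite image, cokernel $\widehat{\Z}^r$) as a black box, whereas the paper never uses \cite{Saito85}. Instead, the paper factors the left-hand map through the intermediate group $\widehat{H}^3_{et}(\X_K,\Z(2))^0$: Theorem \ref{neutheorem} (resting on Lemma \ref{lemreduct}, the vanishing $CH^d(\X_{\O_{K^{un}}})=0$ of Lemma \ref{lemvanishingChow}, and Milne's result on universally divisible elements) shows that $SK_1(\X_K)^0\to \widehat{H}^3_{et}(\X_K,\Z(2))^0$ is surjective with kernel the maximal divisible subgroup, and Proposition \ref{proplast} together with $CH_0(\X_s,2)_{\Q}=0$ for curves (Proposition \ref{katofinite}) shows that $\widehat{H}^3_{et}(\X_K,\Z(2))^0\hookrightarrow H^3_{ar}(\X_K,\Z(2))^0$ with image exactly the torsion subgroup. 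This buys independence from \cite{Saito85} (Section 6 is intended as a reproof and refinement of Saito, which your route would undercut in spirit though not in logic), a finer output (the torsion of the kernel is computed as $\cok(H^2_{et}(\X_K,\Z(2))\to\widehat{H}^2_{et}(\X_K,\Z(2)))\otimes\Q/\Z$), and an argument valid in arbitrary dimension. Granting Saito, your deductions are essentially sound: with the right-hand maps injective, the kernel of the left map is $\ker$\eqref{rec0}, which is divisible and hence equals the maximal divisible subgroup (note Saito gives only divisibility; equality needs the easy remark that the finitely generated group $H^3_{ar}(\X_K,\Z(2))^0$ has no nonzero divisible subgroups), and your torsion-free-cokernel argument correctly forces the image to be the full torsion subgroup.

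Two steps, however, need more than you give. The first is the compatibility you yourself flag as the crux: you defer it, while the paper carries it out in Theorem \ref{thmRec1} by observing that $\rho$ is the composite $SK_1(\X_K)\simeq H^3_{et}(\X_K,\Z(2))\to H^3_{et}(\X_K,\widehat{\Z}(2))\simeq\pi_1^{ab}(\X_K)$ and that the middle map factors through $\widehat{H}^3_{et}(\X_K,\Z(2))$ and $H^3_{ar}(\X_K,\Z(2))$ \emph{by construction} of these complexes, the square from the proof of Theorem \ref{ICFT} closing the diagram; any complete writeup must include this. The second, and more serious, is your claim that $\pi_1^{ab}(\X_K)^{geo}$ is the profinite completion of $\pi_1^{ab}(\X_K)^{geo}_W$ ``by the fiber-product definition together with the comparison of Weil and Galois fundamental groups of the special fiber.'' This is not a formal consequence: a finitely generated dense subgroup of a profinite group need not have that group as its profinite completion ($\Z\subset\Z_p$ is dense, but $\widehat{\Z}\neq\Z_p$). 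Density gives surjectivity of the induced map from the completion; its injectivity is precisely what the duality input supplies, either via the profinite-completion clause of Theorem \ref{ICFT} restricted to degree-zero parts (the paper's route), or, inside your framework, from Saito's identification of the cokernel of \eqref{rec0} with $\widehat{\Z}^r$, which yields $0\to T\to\pi_1^{ab}(\X_K)^{geo}\to\widehat{\Z}^r\to 0$ with $T$ the image, after which injectivity follows from the fact that a surjective continuous endomorphism of $\widehat{\Z}^r$ is an isomorphism. Either patch works, but the step cannot be waved through as definitional.
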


\section{Review of arithmetic cohomology with Tate twists $0$ and $d$.}

Let $p$ be a prime number, let $K/\Q_p$ be a  finite extension, and let $\bar{K}/K$ 
be an algebraic closure. 
We denote by $K^{un}$ the maximal unramified extension of $K$ inside $\bar{K}$, 
by $\O_K$ and $\O_{K^{un}}$ the rings of integers in $K$ and $K^{un}$ respectively, 
and by $s$ and  $\bar{s}$ the closed points of $\mathrm{Spec}(\O_K)$ and 
$\mathrm{Spec}(\O_{K^{un}})$ respectively.  Let $\X/\O_K$ be a $d$-dimensional, 
connected, regular scheme, proper and flat over $\O_K$. Let $\X_s$ (resp. $\X_K$) 
be its special (resp. generic) fiber. We consider the following diagram.
\[ \xymatrix{
\X_{K^{un}}\ar[r]^{\bar{j}}\ar[d]^{}&\X_{\O_{K^{un}}}\ar[d]& \ar[l]_{\bar{i}}\X_{\bar{s}}\ar[d]\\
\X_{K} \ar[r]^{j} &\X&\ar[l]_{i}\X_{s} 
}
\]
We denote by $\Z(d)$ Bloch's cycle complex, which we consider as a complex of \'etale sheaves. We denote  by $G_{\kappa(s)}\simeq\widehat{\Z}$ and by $W_{\kappa(s)}\simeq\Z$ the Galois group and the Weil group of the finite field $\kappa(s)$. We define 
\begin{eqnarray*}
R\Gamma_W(\X_K,\Z(d))&:=&R\Gamma(W_{\kappa(s)},R\Gamma_{et}(\X_{K^{un}},\Z(d))),\\
R\Gamma_W(\X,\Z(d))&:=&R\Gamma(W_{\kappa(s)},R\Gamma_{et}(\X_{\O_{K^{un}}},\Z(d))),\\
R\Gamma_{W}(\X_{s},Ri^!\Z(d))&:=&R\Gamma(W_{\kappa(s)},R\Gamma_{et}(\X_{\bar{s}},R\bar{i}^!\Z(d)).
\end{eqnarray*}
There is a fiber sequence
\begin{equation}\label{locfibsequenceKun+}
R\Gamma_{et}(\X_{\bar{s}},R\bar{i}^!\Z(d))\rightarrow R\Gamma_{et}(\X_{\O_{K^{un}}},\Z(d))\rightarrow R\Gamma_{et}(\X_{K^{un}},\Z(d)).
\end{equation}
Applying the natural transformation $R\Gamma(G_{\kappa(s)},-)\rightarrow R\Gamma(W_{\kappa(s)},-)$ to (\ref{locfibsequenceKun+}), we obtain
the morphism of fiber sequences in $\mathbf{D}(\mathrm{Ab})$
\[ \xymatrix{
R\Gamma_{et}(\X_{s},Ri^!\Z(d))\ar[d]\ar[r]& R\Gamma_{et}(\X,\Z(d))\ar[d]\ar[r]& R\Gamma_{et}(\X_K,\Z(d))\ar[d]\\
R\Gamma_{W}(\X_{s},Ri^!\Z(d))\ar[r]& R\Gamma_W(\X,\Z(d))\ar[r]& R\Gamma_W(\X_K,\Z(d))
}
\]
The complex $R\Gamma_W(\X,\Z(d))$ is not known to be bounded below. 
However the complex $$R\Gamma_W(\X,\Q/\Z(d))
\simeq R\Gamma_{et}(\X,\Q/\Z(d))$$
is bounded, as can be seen by duality, hence the cohomology groups 
$H^{i}_W(\X,\Z(d))$ are $\Q$-vector spaces for $i<< 0$. In particular, for $a<b<< 0$ the map
$$\tau^{>a}R\Gamma_W(\X,\Z(d))\rightarrow \tau^{>b}R\Gamma_W(\X,\Z(d))$$
induces an equivalence
$$(\tau^{>a}R\Gamma_W(\X,\Z(d)))\widehat{\underline{\otimes}}\widehat{\Z}\stackrel{\sim}{\longrightarrow} (\tau^{>b}R\Gamma_W(\X,\Z(d)))\widehat{\underline{\otimes}}\widehat{\Z}.$$
The same observation applies to $R\Gamma_{et}(\X,\Z(d))$.
Here the functor $$(-)\widehat{\underline{\otimes}}\widehat{\Z}:\mathbf{D}^b(\mathrm{Ab})\rightarrow \mathbf{D}^b(\mathrm{LCA})$$ is defined in \cite[Section 2.3]{Geisser-Morin-21}, where $\mathbf{D}^b(\mathrm{Ab})$ (resp. $\mathbf{D}^b(\mathrm{LCA})$) denotes the derived $\infty$-category of bounded complexes of abelian groups (respectively of bounded complexes of locally compact abelian groups),  see \cite{Hoffmann-Spitzweck-07}  and \cite[Section 2.2]{Geisser-Morin-21}.

\begin{defn}
Let $a\ll 0$. We define $$R\Gamma_{ar}(\X,\Z(d)):= (\tau^{>a}R\Gamma_{W}(\X,\Z(d)))\widehat{\underline{\otimes}}\widehat{\Z}.$$
If $R\Gamma_W(\X_s,\Z^c(0))$ is cohomologically bounded, we define $R\Gamma_{ar}(\X_K,\Z(d))$ as the cofiber of the composite map
$$R\Gamma_W(\X_s,Ri^!\Z(d))\rightarrow \tau^{>a}R\Gamma_{W}(\X,\Z(d)) \rightarrow R\Gamma_{ar}(\X,\Z(d))$$
in $\mathbf{D}^b(\mathrm{LCA})$. Similarly, we define
$$R\widehat{\Gamma}_{et}(\X,\Z(d)):=(\tau^{>a}R\Gamma_{et}(\X,\Z(d)))\widehat{\underline{\otimes}}\widehat{\Z}.$$
and  $R\widehat{\Gamma}_{et}(\X_K,\Z(d))$ as the cofiber of the composite map
$$R\Gamma_{et}(\X_s,Ri^!\Z(d))\rightarrow \tau^{>a}R\Gamma_{et}(\X,\Z(d))\rightarrow R\widehat{\Gamma}_{et}(\X,\Z(d))$$
in $\mathbf{D}^b(\mathrm{LCA})$. 
\end{defn}
\begin{rem}On the connected, $d$-dimensional and regular scheme $\X$, we have  $\Z(d)^{\X}=\Z^c(0)^{\X}[-2d]$ by definition. By \cite[Cor.\  7.2]{Geisser10}, we have $Ri^!\Z^c(0)^{\X}=\Z^c(0)^{\X_s}$, hence $Ri^!\Z(d)^{\X}=\Z^c(0)^{\X_s}[-2d]$. Moreover, the following assertions are equivalent:
\begin{itemize}
\item $R\Gamma_W(\X_s,\Z^c(0))$ is cohomologically bounded;
\item $R\Gamma_W(\X_s,\Q^c(0))$ is cohomologically bounded;
\item $R\Gamma_{et}(\X_s,\Z^c(0))$ is cohomologically bounded;
\item $R\Gamma_{et}(\X_s,\Q^c(0))$ is cohomologically bounded.
\end{itemize}
Indeed, both $H^{i}_W(\X_s,\Z^c(0))$ and $H^{i}_{et}(\X_s,\Z^c(0))$ are $\Q$-vector spaces for $i<<0$. Moreover all these complexes are bounded above. Hence the result follows from the direct sum decomposition 
$$R\Gamma_W(\X_s,\Q^c(0))\simeq R\Gamma_{et}(\X_s,\Q^c(0))\oplus R\Gamma_{et}(\X_s,\Q^c(0))[-1].$$
\end{rem}

If $R\Gamma_W(\X_s,\Z^c(0))$ is cohomologically bounded, then  we have a commutative diagram in $\mathbf{D}^b(\mathrm{LCA})$:
\[ \xymatrix{
R\Gamma_{et}(\X_{s},Ri^!\Z(d))\ar[d]\ar[r]& R\Gamma_{et}(\X,\Z(d))\ar[d]\ar[r]& R\Gamma_{et}(\X_K,\Z(d))\ar[d]\\
R\Gamma_{et}(\X_{s},Ri^!\Z(d))\ar[d]\ar[r]& R\widehat{\Gamma}_{et}(\X,\Z(d))\ar[d]\ar[r]& R\widehat{\Gamma}_{et}(\X_K,\Z(d))\ar[d]\\
R\Gamma_{W}(\X_{s},Ri^!\Z(d))\ar[r]& R\Gamma_{ar}(\X,\Z(d))\ar[r]& R\Gamma_{ar}(\X_K,\Z(d))
}
\]
where the rows are fiber sequences, and the top row consists of discrete objects in $\mathbf{D}^b(\mathrm{LCA})$ in the sense of \cite[Def.\ 2.7]{Geisser-Morin-21}.

\begin{prop}\label{propeqarethat}
The map $$R\widehat{\Gamma}_{et}(\X,\Z(d))\rightarrow R\Gamma_{ar}(\X,\Z(d))$$
is an equivalence. If $R\Gamma_W(\X_s,\Z^c(0))$ is cohomologically bounded, 
then we have a canonical cofiber sequence
\begin{equation}\label{descent-triangle-top-redefined-n=d}
R\Gamma_{et}(\X_s,Ri^{!}\Q(d))[-1]\rightarrow 
R\widehat{\Gamma}_{et}(\X_K,\Z(d))\rightarrow R\Gamma_{ar}(\X_K,\Z(d)).
\end{equation}
\end{prop}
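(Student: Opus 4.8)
The plan is to prove two separate assertions. For the first assertion, I would compare the two constructions of $R\Gamma_{ar}(\X,\Z(d))$ and $R\widehat{\Gamma}_{et}(\X,\Z(d))$: both are obtained by applying $(-)\widehat{\underline{\otimes}}\widehat{\Z}$ to a suitable truncation, the only difference being that one uses the Weil-equivariant complex $R\Gamma_W(\X,\Z(d))$ and the other the \'etale complex $R\Gamma_{et}(\X,\Z(d))$. The key input is the natural comparison map $R\Gamma_{et}(\X,\Z(d))\to R\Gamma_W(\X,\Z(d))$ coming from the transformation $R\Gamma(G_{\kappa(s)},-)\to R\Gamma(W_{\kappa(s)},-)$. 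Since $G_{\kappa(s)}\simeq\widehat{\Z}$ and $W_{\kappa(s)}\simeq\Z$, the cofiber of this map is controlled by the difference between continuous cohomology of $\widehat{\Z}$ and group cohomology of $\Z$ acting on the same coefficients, and this difference becomes trivial after profinite completion. Thus I would show that the induced map
\begin{equation*}
(\tau^{>a}R\Gamma_{et}(\X,\Z(d)))\widehat{\underline{\otimes}}\widehat{\Z}\rightarrow (\tau^{>a}R\Gamma_W(\X,\Z(d)))\widehat{\underline{\otimes}}\widehat{\Z}
\end{equation*}
is an equivalence because the functor $(-)\widehat{\underline{\otimes}}\widehat{\Z}$ kills the cofiber.

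For the second assertion, I would work with the rightmost vertical column of the large commutative diagram displayed just before the proposition, namely the maps
\begin{equation*}
R\Gamma_{et}(\X_K,\Z(d))\rightarrow R\widehat{\Gamma}_{et}(\X_K,\Z(d))\rightarrow R\Gamma_{ar}(\X_K,\Z(d)).
\end{equation*}
Since the first assertion gives $R\widehat{\Gamma}_{et}(\X,\Z(d))\simeq R\Gamma_{ar}(\X,\Z(d))$, and both $R\widehat{\Gamma}_{et}(\X_K,\Z(d))$ and $R\Gamma_{ar}(\X_K,\Z(d))$ are defined as cofibers of maps out of the same source $R\Gamma_{et}(\X_s,Ri^!\Z(d))$ respectively $R\Gamma_W(\X_s,Ri^!\Z(d))$, the cofiber of the comparison map between them should be identified with the cofiber of
\begin{equation*}
R\Gamma_{et}(\X_s,Ri^!\Z(d))\rightarrow R\Gamma_W(\X_s,Ri^!\Z(d)).
\end{equation*}
Here I would invoke the remark that $Ri^!\Z(d)^{\X}=\Z^c(0)^{\X_s}[-2d]$, so that this comparison map is (up to shift) the map $R\Gamma_{et}(\X_s,\Z^c(0))\to R\Gamma_W(\X_s,\Z^c(0))$. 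The standard relationship between Galois and Weil cohomology over a finite field gives a fiber sequence expressing the difference in terms of the coinvariants, and after rationalizing I expect the cofiber to be $R\Gamma_{et}(\X_s,Ri^!\Q(d))[-1]$, yielding the stated sequence (\ref{descent-triangle-top-redefined-n=d}).

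Concretely, for a complex $C$ with an action of Frobenius, the difference between $R\Gamma(W_{\kappa(s)},C)$ and $R\Gamma(G_{\kappa(s)},C)$ is governed by comparing the cone of $1-\mathrm{Frob}$ on $C$ with its profinite analogue, and the integral part of this difference is precisely a copy of the rationalized coinvariants shifted by one. I would make this precise using the direct sum decomposition recorded in the remark, $R\Gamma_W(\X_s,\Q^c(0))\simeq R\Gamma_{et}(\X_s,\Q^c(0))\oplus R\Gamma_{et}(\X_s,\Q^c(0))[-1]$, which isolates exactly the extra rational summand $R\Gamma_{et}(\X_s,\Q^c(0))[-1]=R\Gamma_{et}(\X_s,Ri^!\Q(d))[2d-1]$ appearing when passing from \'etale to Weil cohomology.

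The main obstacle I anticipate is the careful bookkeeping inside $\mathbf{D}^b(\mathrm{LCA})$: I must verify that the functor $(-)\widehat{\underline{\otimes}}\widehat{\Z}$ interacts correctly with the truncation and with the formation of cofibers, and that the discrete-versus-compact structure of the objects is respected so that the cofiber sequence (\ref{descent-triangle-top-redefined-n=d}) genuinely lives in $\mathbf{D}^b(\mathrm{LCA})$ rather than merely in $\mathbf{D}^b(\mathrm{Ab})$. The cohomological boundedness hypothesis on $R\Gamma_W(\X_s,\Z^c(0))$ is exactly what is needed to guarantee that $R\Gamma_{ar}(\X_K,\Z(d))$ is defined, and I would check that the rational term $R\Gamma_{et}(\X_s,Ri^!\Q(d))$ is then a bounded complex of $\Q$-vector spaces so that its image in $\mathbf{D}^b(\mathrm{LCA})$ behaves as a discrete object.
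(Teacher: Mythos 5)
Your proposal follows the paper's proof essentially verbatim: the first assertion is deduced from the equivalence $(\tau^{>a}R\Gamma_{et}(\X,\Z(d)))\otimes^L\Z/m\Z\simeq(\tau^{>a}R\Gamma_{W}(\X,\Z(d)))\otimes^L\Z/m\Z$ together with the fact that $(-)\widehat{\underline{\otimes}}\widehat{\Z}$ depends only on these mod-$m$ reductions, and the second from the three-by-three diagram whose top row is the fiber sequence $R\Gamma_{et}(\X_s,Ri^!\Q(d))[-2]\rightarrow R\Gamma_{et}(\X_s,Ri^!\Z(d))\rightarrow R\Gamma_{W}(\X_s,Ri^!\Z(d))$ arising from the rational Galois/Weil decomposition you cite. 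Just fix one piece of suspension bookkeeping: the cofiber of $R\widehat{\Gamma}_{et}(\X_K,\Z(d))\rightarrow R\Gamma_{ar}(\X_K,\Z(d))$ is not equal to the cofiber of the special-fiber comparison map but to that cofiber shifted by $[1]$, namely $R\Gamma_{et}(\X_s,Ri^!\Q(d))$, which is precisely what puts the fiber term $R\Gamma_{et}(\X_s,Ri^!\Q(d))[-1]$ in \eqref{descent-triangle-top-redefined-n=d}.
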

\begin{proof}
In view of the equivalence $$(\tau^{>a}R\widehat{\Gamma}_{et}(\X,\Z(d)))\otimes^L\Z/m\Z \stackrel{\sim}{\rightarrow} (\tau^{>a}R\Gamma_{W}(\X,\Z(d)))\otimes^L\Z/m\Z$$
for any $m$, the first assertion follows from \cite[Remark 2.12]{Geisser-Morin-21}.

If $R\Gamma_W(\X_s,\Z^c(0))$ is cohomologically bounded, then we have a commutative square 
\[ \xymatrix{
 R\Gamma_{et}(\X_{s},Ri^{!}\Z(d)) \ar[r]\ar[d]& R\Gamma_{W}(\X_{s},Ri^{!}\Z(d))\ar[d]\\
R\widehat{\Gamma}_{et}(\X,\Z(d))\ar[r]^{\sim}& R\Gamma_{ar}(\X,\Z(d))
}
\]
in $\mathbf{D}^b(\mathrm{LCA})$ and a fiber sequence
$$R\Gamma(\X_{s},Ri^{!}\Q(d)))[-2]\rightarrow R\Gamma_{et}(\X_{s},Ri^{!}\Z(d)) \rightarrow R\Gamma_{W}(\X_{s},Ri^{!}\Z(d))$$
of discrete objects in $\mathbf{D}^b(\mathrm{LCA})$ in the sense of \cite[Def.\ 2.7]{Geisser-Morin-21}. We obtain the following diagram
\[ \xymatrix{
R\Gamma_{et}(\X_{s},Ri^{!}\Q(d)))[-2] \ar[r]\ar[d]^{}& R\Gamma_{et}(\X_{s},Ri^{!}\Z(d)) \ar[r]\ar[d]& R\Gamma_{W}(\X_{s},Ri^{!}\Z(d))\ar[d]\\
0\ar[r]\ar[d]&R\widehat{\Gamma}_{et}(\X,\Z(d))\ar[d]\ar[r]^{\sim}& R\Gamma_{ar}(\X,\Z(d))\ar[d]
\\
R\Gamma_{et}(\X_{s},Ri^{!}\Q(d)))[-1]\ar[r]&R\widehat{\Gamma}_{et}(\X_{K},\Z(d))\ar[r]& R\Gamma_{ar}(\X_{K},\Z(d))
}
\]
with exact rows and columns. The lower horizontal cofiber sequence is (\ref{descent-triangle-top-redefined-n=d}).
\end{proof}

\begin{notation}
We set
$$R\Gamma_{et}(\X,\widehat{\Z}(d)):= R\Gamma_{et}(\X,\Z(d)) \widehat{\otimes} \widehat{\Z}:= R\mathrm{lim} \left(R\Gamma_{et}(\X,\Z(d))\otimes^L \Z/m\Z\right)$$
$$R\Gamma_{et}(\X_K,\widehat{\Z}(d)):= R\Gamma_{et}(\X_K,\Z(d)) \widehat{\otimes} \widehat{\Z}:= R\mathrm{lim} \left(R\Gamma_{et}(\X_K,\Z(d))\otimes^L \Z/m\Z\right)$$
in $\mathbf{D}^b(\mathrm{Ab})$.
\end{notation}

Recall from \cite[Prop.\  2.6]{Geisser-Morin-21} that we have an adjunction
$$\iota:\mathbf{D}^b(\mathrm{Ab})\rightleftarrows \mathbf{D}^b(\mathrm{LCA}):\mathrm{disc}$$
We sometimes denote $(-)^{\delta}:=\mathrm{disc}(-)$ for brevity and suppress any mention of the functor $\iota$.

\begin{prop}\label{ar-et-hat-et-complete}
We have equivalences
\begin{equation}\label{forgettopmap}
\mathrm{disc}(R\widehat{\Gamma}_{et}(\X,\Z(d)))\simeq 
\mathrm{disc}(R\Gamma_{ar}(\X,\Z(d)))\simeq R\Gamma_{et}(\X,\widehat{\Z}(d))
\end{equation}
in $\mathbf{D}^b(\mathrm{Ab})$. If $R\Gamma_W(\X_s,\Z^c(0))$
is cohomologically bounded, then we have a canonical map
\begin{equation}\label{forgettopmap2}
\mathrm{disc}(R\Gamma_{ar}(\X_K,\Z(d)))\rightarrow R\Gamma_{et}(\X_K,\widehat{\Z}(d))
\end{equation}
in $\mathbf{D}^b(\mathrm{Ab})$, which induces an equivalence after applying $(-)\widehat{\otimes} \widehat{\Z}$.
\end{prop}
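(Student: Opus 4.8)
The plan is to treat the three assertions in turn, reducing everything to Proposition~\ref{propeqarethat} and to the behaviour of the two completion functors under $\mathrm{disc}$. For the first assertion, applying the functor $\mathrm{disc}$ to the equivalence $R\widehat{\Gamma}_{et}(\X,\Z(d))\stackrel{\sim}{\rightarrow}R\Gamma_{ar}(\X,\Z(d))$ of Proposition~\ref{propeqarethat} immediately yields the left-hand equivalence. For the right-hand equivalence, I would invoke the basic compatibility from \cite[Section 2.3]{Geisser-Morin-21} that $\mathrm{disc}(C\,\widehat{\underline{\otimes}}\,\widehat{\Z})\simeq C\,\widehat{\otimes}\,\widehat{\Z}$ for $C\in\mathbf{D}^b(\mathrm{Ab})$, which gives $\mathrm{disc}(R\widehat{\Gamma}_{et}(\X,\Z(d)))\simeq(\tau^{>a}R\Gamma_{et}(\X,\Z(d)))\widehat{\otimes}\widehat{\Z}$. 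Since $H^i_{et}(\X,\Z(d))$ is a $\Q$-vector space for $i\ll0$, the complex $\tau^{\le a}R\Gamma_{et}(\X,\Z(d))$ consists of $\Q$-vector spaces for $a\ll0$, so $\tau^{\le a}R\Gamma_{et}(\X,\Z(d))\otimes^L\Z/m\simeq0$ for all $m$; taking $R\mathrm{lim}$ shows that the truncation map induces $(\tau^{>a}R\Gamma_{et}(\X,\Z(d)))\widehat{\otimes}\widehat{\Z}\simeq R\Gamma_{et}(\X,\Z(d))\widehat{\otimes}\widehat{\Z}=R\Gamma_{et}(\X,\widehat{\Z}(d))$.

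For the second assertion, I would produce \eqref{forgettopmap2} as the map induced on cofibers by a morphism of fiber sequences. Applying the exact functor $\mathrm{disc}$ to the bottom row of the commutative diagram preceding Proposition~\ref{propeqarethat}, using that $R\Gamma_{W}(\X_{s},Ri^{!}\Z(d))$ is a discrete object and using the first assertion to identify the middle term, gives the fiber sequence
\[
R\Gamma_{W}(\X_{s},Ri^{!}\Z(d))\rightarrow R\Gamma_{et}(\X,\widehat{\Z}(d))\rightarrow\mathrm{disc}(R\Gamma_{ar}(\X_K,\Z(d))).
\]
Completing the top (étale) localization row gives the fiber sequence
\[
R\Gamma_{et}(\X_{s},Ri^{!}\widehat{\Z}(d))\rightarrow R\Gamma_{et}(\X,\widehat{\Z}(d))\rightarrow R\Gamma_{et}(\X_K,\widehat{\Z}(d)).
\]
By the Remark, $Ri^{!}\Z(d)=\Z^c(0)^{\X_s}[-2d]$, and the Weil-étale comparison fiber sequence used in the proof of Proposition~\ref{propeqarethat} shows that the fibre of the natural map $R\Gamma_{et}(\X_{s},Ri^{!}\Z(d))\rightarrow R\Gamma_{W}(\X_{s},Ri^{!}\Z(d))$ is a complex of $\Q$-vector spaces. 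Hence the completion unit identifies $R\Gamma_{et}(\X_{s},Ri^{!}\widehat{\Z}(d))\simeq R\Gamma_{W}(\X_{s},Ri^{!}\Z(d))\widehat{\otimes}\widehat{\Z}$, and the resulting completion map $R\Gamma_{W}(\X_{s},Ri^{!}\Z(d))\rightarrow R\Gamma_{et}(\X_{s},Ri^{!}\widehat{\Z}(d))$ provides the left-hand vertical arrow. Together with the identity on the common middle term, this is a morphism of fiber sequences, and the induced map on the right-hand cofibers is \eqref{forgettopmap2}.

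For the third assertion, I would apply $(-)\widehat{\otimes}\widehat{\Z}$ to this morphism of fiber sequences. The middle map is the identity of $R\Gamma_{et}(\X,\widehat{\Z}(d))$, which remains an equivalence after $\widehat{\otimes}\widehat{\Z}$ by idempotence of derived $\widehat{\Z}$-completion; the left-hand map becomes $R\Gamma_{W}(\X_{s},Ri^{!}\Z(d))\widehat{\otimes}\widehat{\Z}\rightarrow R\Gamma_{et}(\X_{s},Ri^{!}\widehat{\Z}(d))$, which is an equivalence by the identification of the previous paragraph together with idempotence. Since a map of fiber sequences that is an equivalence on two of the three terms is an equivalence on the third, the induced map on the right-hand cofibers is an equivalence after $\widehat{\otimes}\widehat{\Z}$.

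The main obstacle I anticipate lies in the second assertion: verifying that the left-hand square, which compares the Weil structure map of the bottom row with the completed étale structure map of the localization sequence via the completion unit on the special fibre, genuinely commutes, so that the induced map on cofibers is canonical and functorial. This requires tracing the constructions through the commutative diagram preceding Proposition~\ref{propeqarethat} and through the definition of the completion functors, and in particular confirming that $\mathrm{disc}$ preserves the relevant fiber sequences despite $R\Gamma_{et}(\X,\Z(d))$ not being bounded below; once this naturality is in place, the remaining two assertions are formal consequences of the $\Q$-vector space vanishing under completion and the idempotence of $\widehat{\otimes}\widehat{\Z}$.
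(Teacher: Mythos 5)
Your proposal is correct and follows essentially the same route as the paper: the first equivalence comes from Proposition~\ref{propeqarethat}, the second is the content of \cite[Remark 4.12]{Geisser-Morin-21} (which you re-derive via the $\Q$-vector-space truncation argument already sketched in Section~2), and the map \eqref{forgettopmap2} together with its completed equivalence is obtained, exactly as in the paper, from a morphism of cofiber sequences comparing $\mathrm{disc}$ of the arithmetic localization sequence with the $\widehat{\Z}$-completed \'etale localization sequence, with the left vertical map an equivalence after completion because the fiber of $R\Gamma_{et}(\X_s,Ri^!\Z(d))\rightarrow R\Gamma_W(\X_s,Ri^!\Z(d))$ is a complex of $\Q$-vector spaces, and the conclusion by two-out-of-three. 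Your write-up is in fact more explicit than the paper's terse proof about constructing the uncompleted map \eqref{forgettopmap2} and about the commutativity of the left-hand square, which is a reasonable elaboration rather than a deviation.
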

\begin{proof}
The equivalences of (\ref{forgettopmap}) are given by Proposition \ref{propeqarethat} and \cite[Remark 4.12]{Geisser-Morin-21} respectively. If $R\Gamma_W(\X_s,\Z^c(0))$ is cohomologically bounded, then we have a morphism of cofiber sequences
\[ \xymatrix{
R\Gamma_{W}(\X_s,Ri^{!}\Z(d))\widehat{\otimes} \widehat{\Z} \ar[r]\ar[d]^{\simeq}&R\Gamma_{ar}(\X,\Z(d))^{\delta}\widehat{\otimes} \widehat{\Z}\ar[d]^{\simeq}\ar[r]&R\Gamma_{ar}(\X_K,\Z(d))^{\delta}\widehat{\otimes} \widehat{\Z}    \ar[d]\\
R\Gamma_{et}(\X_s,Ri^{!}\Z(d))\widehat{\otimes} \widehat{\Z} \ar[r]& R\Gamma_{et}(\X,\widehat{\Z}(d))\ar[r]& R\Gamma_{et}(\X_K,\widehat{\Z}(d))  
}
\]
where the left and middle vertical maps are equivalences. Hence the right vertical map is an equivalence as well. 

\end{proof}
\begin{rem}\label{remfunctoriality}
Using the push-forward map of \cite[Cor.\  7.2]{Geisser10}, it is easy to check that the complexes $R\Gamma_{ar}(\X,\Z(d_{\X}))$ and $R\Gamma_{ar}(\X_K,\Z(d_{\X}))$ are covariantly functorial for proper maps $\X\rightarrow \mathcal{Y}$, and that the equivalences (\ref{forgettopmap})  and the map (\ref{forgettopmap2}) are functorial.
\end{rem}
\begin{example}
We have $R\Gamma_{et}(\mathrm{Spec}(\O_K),\Z(1))\simeq R\Gamma(G_{\kappa(s)}, \O^{\times}_{K^{un}})\simeq \O^{\times}_{K}[-1]$ in $\mathbf{D}^b(\mathrm{Ab})$. We obtain
\begin{eqnarray*}
R\Gamma_{ar}(\mathrm{Spec}(\O_K),\Z(1))&\simeq&R\widehat{\Gamma}_{et}(\mathrm{Spec}(\O_K),\Z(1))\\
&\simeq&  (\tau^{>a}R\Gamma_{et}(\mathrm{Spec}(\O_K),\Z(1))\widehat{\underline{\otimes}}\widehat{\Z}\\
&\simeq& \left(\O^{\times}_K[-1] \right)\widehat{\underline{\otimes}}\widehat{\Z}\\
&\simeq& \O^{\times}_K[-1]
\end{eqnarray*}
 where $\O^{\times}_K$ is endowed with its profinite topology. Here we use Proposition \ref{propeqarethat} and 
 the fact that $\O^{\times}_K$ is derived profinite complete. Moreover, we have $Ri^!\Z(1)\simeq \Z[-2]$, $H_W^{i}(s,\Z)=\Z$ for $i=0,1$, and $H_W^{i}(s,\Z)=0$ for $i\neq 0,1$. On the other hand, we have $H_{et}^{i}(s,\Z)=\Z,0,\Q/\Z$ for $i=0,1,2$, and $H_{et}^{i}(s,\Z)=0$ for $i\neq 0,2$. We obtain
\begin{eqnarray*}
H^{i}_{ar}(\mathrm{Spec}(K),\Z(1))&\simeq& K^{\times},\Z\textrm{ for }i=1,2,\\
\widehat{H}^{i}_{et}(\mathrm{Spec}(K),\Z(1))&\simeq& K^{\times},0,\Q/\Z\textrm{ for }i=1,2,3
\end{eqnarray*}
where $K^{\times}$ is endowed with its natural topology, $H^{i}_{ar}(\mathrm{Spec}(K),\Z(1))=0$ for $i\neq 1,2$, and $\widehat{H}^{i}_{et}(\mathrm{Spec}(K),\Z(1))=0$ for $i\neq 1,3$.
\end{example}

\subsection{Weight $0$}
Recall that for a finite field $k$ with algebraic closure $\bar k$ and 
$W_k$ be its Weil-group, the Weil-\'etale and $Wh$-cohomology of a proper 
scheme $Y$ over $k$ of the constant sheaf $\Z$ are defined to be
$$R\Gamma_{W}(Y,\Z):= R\Gamma(W_k, R\Gamma_{et}(Y\times_k\bar k,\Z)),$$
$$R\Gamma_{Wh}(Y,\Z):= R\Gamma(W_k, R\Gamma_{eh}(Y\times_k\bar k,\Z)).$$
By \cite[Prop.\ 3.2, Prop.\ 3.5]{Geisser-Morin-21}, 
$R\Gamma_{W}(Y,\Z)$ is a perfect complex of abelian groups if $Y^{red}$
is a strict normal crossing scheme, and $R\Gamma_{Wh}(Y,\Z)$ is a 
perfect complex of abelian groups under resolution of singularities.

\begin{notation}
For the rest of the paper we will write $R\Gamma_{ar}(Y,\Z)$ in both of
the above cases. Since $R\Gamma_{W}(Y,\Z)\simeq R\Gamma_{Wh}(Y,\Z)$
if $Y^{red}$ is a strict normal crossing scheme under resolution of singularities,
there is no conflict in the intersection of both cases. 
\end{notation}

If $R\Gamma_{ar}(Y,\Z)$ is a perfect complex of abelian groups, it 
belongs to $\mathbf{D}^b(\mathrm{FLCA})$, 
where $\mathbf{D}^b(\mathrm{FLCA})\subseteq \mathbf{D}^b(\mathrm{LCA})$ is 
the full stable subcategory consisting of 
bounded complexes of locally compact abelian groups of finite ranks in the sense 
of \cite{Hoffmann-Spitzweck-07}. Hence we can define 
$$ 
R\Gamma_{ar}(\X_s,\R/\Z):=
R\Gamma_{ar}(\X_s,\Z)\underline{\otimes}^L\R/\Z \in \mathbf{D}^b(\mathrm{FLCA}).$$

Recall from \cite[Section 4.3, Notation 4.7]{Geisser-Morin-21} the definition of 
the complexes $R\Gamma_{ar}(\X,\Z)$ and $R\Gamma_{ar}(\X_K,\Z)$:
If $\X_s^{\mathrm{red}}$ is a simple normal crossing scheme in the sense of 
\cite[Def.\ 3.1]{Geisser-Morin-21} (in particular  when $\X$ has strictly 
semi-stable reduction in the sense of \cite[Def.\ 4.15]{Geisser-Morin-21}),
or if we assume resolution of singularities, we define
$R\Gamma_{ar}(\X,\Z)=R\Gamma_{ar}(\X_s,\Z)
\in\mathbf{D}^b(\mathrm{LCA})$, and 
$R\Gamma_{ar}(\X_K,\Z)\in\mathbf{D}^b(\mathrm{LCA})$ by 
the fiber sequences
\begin{equation*}
R\Gamma_{W}(\X_s,Ri^{!}\Z)\rightarrow R\Gamma_{ar}(\X,\Z)\rightarrow R\Gamma_{ar}(\X_K,\Z)
\end{equation*}
in $\mathbf{D}^b(\mathrm{LCA})$. 

The following result is \cite[Prop.\  4.13]{Geisser-Morin-21}, where
$\mathbf{D}^b(\mathrm{FLCA})\subseteq \mathbf{D}^b(\mathrm{LCA})$ is 
the full stable subcategory consisting of 
bounded complexes of locally compact abelian groups of finite ranks in the sense 
of \cite{Hoffmann-Spitzweck-07}.

\begin{prop} \label{prop-finiteranks}
1) Assume resolution of singularities for schemes over $\kappa(s)$ of dimension 
$\leq d-1$, or assume that $\X_s^{\mathrm{red}}$ is a simple normal crossing scheme. 
Then $R\Gamma_{ar}(\X,\Z)$ and $R\Gamma_{ar}(\X_K,\Z)$ belong to 
$\mathbf{D}^b(\mathrm{FLCA})$.

2) Assume that $R\Gamma_W(\X_s,\Z^c(0))$ is a perfect complex of abelian groups.
Then $R\Gamma_{ar}(\X,\Z(d))$ and $R\Gamma_{ar}(\X_K,\Z(d))$ belong to 
$\mathbf{D}^b(\mathrm{FLCA})$.
\end{prop}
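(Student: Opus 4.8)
The plan is to exploit that $\mathbf{D}^b(\mathrm{FLCA})$ is a full stable subcategory of $\mathbf{D}^b(\mathrm{LCA})$, hence closed under shifts and cofiber sequences, so that each of the four complexes need only be built by finitely many cofiber sequences out of objects already known to have finite ranks. The two kinds of building blocks will be: (i) perfect complexes of abelian groups, which are discrete with finitely generated cohomology and thus of finite rank; and (ii) complexes $C\,\widehat{\underline{\otimes}}\,\widehat{\Z}$ for which the underlying abelian complex $\mathrm{disc}(C\,\widehat{\underline{\otimes}}\,\widehat{\Z})=C\,\widehat{\otimes}\,\widehat{\Z}$ is a perfect complex of $\widehat{\Z}$-modules, so that by the construction of $\widehat{\underline{\otimes}}\,\widehat{\Z}$ in \cite[Section 2.3]{Geisser-Morin-21} each cohomology group carries its profinite topology, a compact group of finite rank.

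I would treat part 2) first, as it is cleaner. For the closed-fiber term the Remark gives $Ri^!\Z(d)\simeq \Z^c(0)^{\X_s}[-2d]$, so $R\Gamma_W(\X_s,Ri^!\Z(d))\simeq R\Gamma_W(\X_s,\Z^c(0))[-2d]$ is perfect by hypothesis, hence in $\mathbf{D}^b(\mathrm{FLCA})$ as a block of type (i). For $R\Gamma_{ar}(\X,\Z(d))$ I would use Proposition \ref{ar-et-hat-et-complete}: its image under $\mathrm{disc}$ is $R\Gamma_{et}(\X,\widehat{\Z}(d))=R\mathrm{lim}_m R\Gamma_{et}(\X,\Z/m\Z(d))$. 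Since $\X$ is proper over $\O_K$, proper base change over the henselian pair reduces the finite-coefficient cohomology to that of the proper special fiber over the finite residue field, where it is finite; as the $\ell$-adic ranks are bounded and torsion occurs for only finitely many primes, $R\Gamma_{et}(\X,\widehat{\Z}(d))$ is a perfect complex of $\widehat{\Z}$-modules. Since $R\Gamma_{ar}(\X,\Z(d))=(\tau^{>a}R\Gamma_W(\X,\Z(d)))\,\widehat{\underline{\otimes}}\,\widehat{\Z}$ is of type (ii), it lies in $\mathbf{D}^b(\mathrm{FLCA})$, and the cofiber $R\Gamma_{ar}(\X_K,\Z(d))$ is then in $\mathbf{D}^b(\mathrm{FLCA})$ by stability.

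For part 1), the term $R\Gamma_{ar}(\X,\Z)=R\Gamma_{ar}(\X_s,\Z)$ is a perfect complex of abelian groups by \cite[Prop.\ 3.2, Prop.\ 3.5]{Geisser-Morin-21} (in the strict normal crossing case, respectively under resolution of singularities), hence of finite rank; it then remains only to show that the weight-$0$ local-cohomology term $R\Gamma_W(\X_s,Ri^!\Z)$ lies in $\mathbf{D}^b(\mathrm{FLCA})$, after which $R\Gamma_{ar}(\X_K,\Z)$ follows as the cofiber. Here, unlike in weight $d$, there is no direct identification of $Ri^!\Z$ with a shift of the dualizing complex, so I would argue in two steps: reducing modulo $m$ and invoking absolute purity with finite coefficients (with the $p$-part via a logarithmic/syntomic variant adapted to the good or strictly semistable special fiber) identifies the torsion part with finite Weil-\'etale cohomology of the proper $\X_s$, while the free part is pinned down by the rational localization triangle $R\Gamma_W(\X_s,Ri^!\Q)\to R\Gamma_W(\X,\Q)\to R\Gamma_W(\X_K,\Q)$ together with the finite-dimensionality coming from perfectness of the closed-fiber complexes. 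Combining these shows $R\Gamma_W(\X_s,Ri^!\Z)$ is perfect, hence of finite rank.

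I expect this last point, controlling the weight-$0$ local cohomology $R\Gamma_W(\X_s,Ri^!\Z)$, to be the main obstacle, precisely because integral absolute purity is unavailable and one cannot transport the clean weight-$d$ identification. The delicate part is to show that the interaction of the $p$-part of the torsion with the Weil-group cohomology stays finitely generated for the given reduction type, and that the free rank is finite; once these generators are placed in $\mathbf{D}^b(\mathrm{FLCA})$, everything else propagates formally along the defining cofiber sequences.
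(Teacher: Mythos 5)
Note first that the paper does not prove this Proposition at all: it is quoted verbatim from \cite[Prop.\ 4.13]{Geisser-Morin-21}, so your attempt can only be judged on its own merits. Your overall architecture is the right one --- exploit closure of $\mathbf{D}^b(\mathrm{FLCA})$ under cofiber sequences, handle the weight-$d$ closed-fiber term by $Ri^!\Z(d)\simeq \Z^c(0)[-2d]$ plus the perfectness hypothesis, and handle $R\Gamma_{ar}(\X,\Z(d))$ through $\mathrm{disc}(R\Gamma_{ar}(\X,\Z(d)))\simeq R\Gamma_{et}(\X,\widehat{\Z}(d))$ --- but part 1) contains a genuinely false intermediate claim.

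The claim that $R\Gamma_W(\X_s,Ri^!\Z)$ is \emph{perfect} fails already for $\X=\mathrm{Spec}(\O_K)$. Since $Ri^!\Z$ is rationally trivial (the maps $H^i_{et}(\O_K,\Q)\to H^i_{et}(K,\Q)$ are isomorphisms), the Weil and \'etale local cohomologies agree, and $H^3_{et}(s,Ri^!\Z)$ is the cokernel of $\Q/\Z=H^2_{et}(\O_K,\Z)\to H^2_{et}(K,\Z)=\Hom_{cont}(G_K^{ab},\Q/\Z)$, i.e.\ the Pontryagin dual of the image of inertia $\O_K^{\times}$ in $G_K^{ab}$; this contains $(\Q_p/\Z_p)^{[K:\Q_p]}$, a torsion group of cofinite type which is not finitely generated. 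Your conclusion survives only because membership in $\mathbf{D}^b(\mathrm{FLCA})$ requires finite \emph{ranks}, not perfectness, and cofinite-type torsion qualifies. The repair is in fact much simpler than your proposed purity/syntomic detour: since $Ri^!\Z$ is torsion, $R\Gamma_W(\X_s,Ri^!\Z)\simeq R\Gamma_{et}(\X_s,Ri^!\Z)$ (as the paper itself observes in the proof of Proposition \ref{pi1corepresent}), and each $H^i_{\X_s}(\X,\Z/m)$ is finite by the localization triangle, because both $H^{\ast}_{et}(\X,\Z/m)$ (proper base change to the finite residue field) and $H^{\ast}_{et}(\X_K,\Z/m)$ (finiteness over $p$-adic fields) are finite; the coefficient sequence then bounds both $H^i[m]$ and $H^i/m$, giving finite ranks. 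Absolute purity as you invoke it is moreover inapplicable: $\X_s$ is not regular in the semistable case, so there is no identification of $Ri^!\Z/m$ with a twist of $\Z/m$ on $\X_s$ --- but no such identification is needed.

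A second, lesser gap sits in part 2): the $p$-primary finiteness of $H^i_{et}(\X,\Z(d)/p^r)$ is \emph{not} a proper base change statement, since $\Z(d)/p^r$ is not locally constant on $\X$; this is precisely where syntomic/logarithmic de Rham--Witt input is required, so your relegation of that input to part 1) (where it is unnecessary) inverts where the real content lies. Likewise, perfectness of $R\Gamma_{et}(\X,\widehat{\Z}(d))$ over $\widehat{\Z}$ needs not just finiteness of each $H^i_{et}(\X,\Z_\ell(d))$ for $\ell\neq p$ (a weight argument over the finite residue field) but vanishing for almost all $\ell$, which rests on Gabber-type results as in \cite[Lemma 4.16, 4.17]{Geisser-Morin-21} rather than on bookkeeping of ranks. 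With these two points corrected, your skeleton does yield the Proposition.
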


If $R\Gamma_{ar}(\X_K,\Z)$ belong to $\mathbf{D}^b(\mathrm{FLCA})$, 
then we define \cite{Geisser-Morin-21}
\begin{eqnarray*}
R\Gamma_{ar}(\X_K,\R)&:=&R\Gamma_{ar}(\X_K,\Z)\underline{\otimes}^L\R \in \mathbf{D}^b(\mathrm{FLCA})\\
R\Gamma_{ar}(\X_K,\R/\Z)&:=&R\Gamma_{ar}(\X_K,\Z)\underline{\otimes}^L\R/\Z \in \mathbf{D}^b(\mathrm{FLCA})
\end{eqnarray*}
where $\underline{\otimes}^L$ denotes the derived tensor product in $\mathbf{D}^b(\mathrm{FLCA})$. We have a fiber sequence
$$R\Gamma_{ar}(\X_K,\Z)\rightarrow R\Gamma_{ar}(\X_K,\R)\rightarrow R\Gamma_{ar}(\X_K,\R/\Z).$$
The cohomology groups $H^{i}_{ar}(\X_K,A)\in\mathcal{LH}(\mathrm{FLCA})$ for $A=\Z,\R,\R/\Z$ are defined using the $t$-structure on the stable $\infty$-category $\mathbf{D}^b(\mathrm{FLCA})$, whose heart is the abelian category $\mathcal{LH}(\mathrm{FLCA})$. A fiber sequence in $\mathbf{D}^b(\mathrm{FLCA})$ therefore yields a long exact sequence in $\mathcal{LH}(\mathrm{FLCA})$.

\section{Class Field Theory}
From now on, $\X/\O_K$ denotes a flat, proper, separable $\O_K$-scheme with 
geometrically connected fibers. We assume that $\X$ is regular of Krull dimension $d$.  

\subsection{The abelian Weil-\'etale fundamental group}

We denote by $\pi^{ab}_1(S)$ the abelianization of the classical \'etale 
profinite fundamental group of a scheme $S$. We have morphisms
$$\pi^{ab}_1(\X_{K})\twoheadrightarrow  \pi^{ab}_1(\X)
\stackrel{\sim}{\leftarrow} \pi^{ab}_1(\X_s)$$
where the map on the left map is surjective because $\X$ is normal 
\cite[V, Prop.\ 8.2]{SGA1} and the right map is an isomorphism by 
\cite[X, Thm 2.1]{SGA1}.
We denote by $\Pi^{ab}_1(\X_{s})$ the abelian enlarged fundamental group of the 
closed fiber. The pro-group $\Pi_1^{ab}(\X_{s})$ is isomorphic to a 
constant abelian group.

\begin{defn} 
We define
$$\Pi^{ab}_1(\X_{s})_W:= \Pi^{ab}_1(\X_s)\times_{G_{\kappa(s)}}W_{\kappa(s)}$$
and
$$\pi_1^{ab}(\X_K)_W:= \pi^{ab}_1(\X_{K})\times_{\pi^{ab}_1(\X_{s})} \Pi_1^{ab}(\X_{s})_W.$$
\end{defn}

By \cite[Theorem 2.5]{Geisser-Schmidt-17}, $\Pi^{ab}_1(\X_{s})_W$ is a finitely generated abelian group, 
hence $\pi_1^{ab}(\X_K)_W$ is an extension of finitely generated abelian group by a profinite abelian group. 
In particular, $\pi_1^{ab}(\X_K)_W$ is locally compact. 
Moreover, $\Pi^{ab}_1(\X_{s})_W$ is a  dense subgroup of $\pi^{ab}_1(\X_{s})$, hence 
$\pi_1^{ab}(\X_K)_W$ is a dense subgroup of $\pi_1^{ab}(\X_K)$. We have a morphism of extensions
\[ \xymatrix{
0\ar[r]^{}&I^{ab}(\X) \ar[r]\ar[d]^{=}&\pi_1^{ab}(\X_K)_W \ar[r]\ar[d]^{}&\Pi^{ab}_1(\X_{s})_W\ar[d]\ar[r]^{}&0\\
0\ar[r]^{}&I^{ab}(\X) \ar[r]& \pi_1^{ab}(\X_K) \ar[r] & \pi^{ab}_1(\X_{s}) \ar[r]^{}&0
}
\]
where $I^{ab}(\X)$ is defined by the lower exact sequence, and the middle and right vertical arrows are continuous, injective with dense image.  

\begin{example}
If $\X=\mathrm{Spec}(\O_{K})$ then
$$\pi_1^{ab}(\X_K)_W\simeq G^{ab}_K\times_{G_{\kappa}}W_{\kappa}\simeq W^{ab}_K.$$
\end{example}

\begin{rem}
The group $\Pi^{ab}_1(\X_{s})$ is the abelianization of the fundamental group of 
the small \'etale topos  $\X_{s,et}$. 
It follows from \cite[Theorem 3.1, Corollary 3]{Flach-Morin-12} that  
$\Pi^{ab}_1(\X_{s})_W$ is the abelianization of the fundamental group of the 
Weil-\'etale topos $\X_{s,W}$. A geometrical interpretation of the fundamental group 
$\pi_1^{ab}(\X_K)_W$
is unknown yet. 
\end{rem}

\begin{notation}
If $A,B$ are locally compact abelian groups, then we denote by $\underline{\Hom}(A,B)$ the abelian group of continuous morphisms from $A$ to $B$, endowed with the compact-open topology. For any  locally compact abelian group $A$, we denote by
$A^D:=\underline{\Hom}(A,\R/\Z)$
the Pontryagin dual of $A$.
\end{notation}

\begin{lem} 1) If $\mathcal{Y}$ is a normal scheme of finite type over $\O_K$, then
$R\Gamma_{et}(\Y,\Z)$ is in $\mathbf{D}^b(\mathrm{FLCA})$ and 
$H_{et}^{i}(\mathcal{Y}, \Q/\Z) \simeq H_{et}^{i}(\mathcal{Y}, \R/\Z)$
for all $i\geq 1$. 

2) If $T$ is a scheme of finite type over a finite field such that
$R\Gamma_{Wh}(T,\Z)$ belongs to $\mathbf{D}^b(\mathrm{FLCA})$, then
$$H_{Wh}^i(T, \Q/\Z)\hookrightarrow H_{Wh}^i(T, \R/\Z)$$
is injective for any $i$. 
\end{lem}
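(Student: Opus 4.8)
The plan is to reduce both parts to a single formal computation resting on the flatness of $\R/\Q$. Since $\R/\Q$ is torsion-free and divisible it is a $\Q$-vector space, hence a flat $\Z$-module, so that $\Tor_i^{\Z}(M,\R/\Q)=0$ for all $i\ge1$ and all abelian groups $M$. Applying $\Tor_\bullet^{\Z}(M,-)$ to $0\to\Q/\Z\to\R/\Z\to\R/\Q\to0$ then shows that
\[
M\otimes\Q/\Z\longrightarrow M\otimes\R/\Z\quad\text{is injective with cokernel }M\otimes\R/\Q,
\]
and that $\Tor_1^{\Z}(M,\Q/\Z)\to\Tor_1^{\Z}(M,\R/\Z)$ is an isomorphism. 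Writing $C$ for $R\Gamma_{et}(\Y,\Z)$ in part 1 and for $R\Gamma_{Wh}(T,\Z)$ in part 2, I feed this into the universal coefficient sequence over the principal ideal domain $\Z$,
\[
0\to H^{i}(C)\otimes A\to H^{i}(C\otimes^{L}A)\to\Tor_1^{\Z}(H^{i+1}(C),A)\to0,
\]
which is functorial in $A$. The four lemma applied to the map of such sequences induced by $\Q/\Z\hookrightarrow\R/\Z$ then shows that $H^{i}(C\otimes^{L}\Q/\Z)\to H^{i}(C\otimes^{L}\R/\Z)$ is injective, with cokernel $H^{i}(C)\otimes\R/\Q$ (all of this read off at the level of underlying abelian groups).

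For part 2 this is the whole argument: the hypothesis $R\Gamma_{Wh}(T,\Z)\in\mathbf{D}^b(\mathrm{FLCA})$ ensures that $R\Gamma_{Wh}(T,\R/\Z):=R\Gamma_{Wh}(T,\Z)\underline{\otimes}^{L}\R/\Z$ is defined, and $R\Gamma_{Wh}(T,\Q/\Z)\simeq R\Gamma_{Wh}(T,\Z)\otimes^{L}\Q/\Z$, so the displayed injectivity is exactly the general statement above with $C=R\Gamma_{Wh}(T,\Z)$. No surjectivity holds in general, reflecting the fact that the free part of $H^{i+1}_{Wh}(T,\Z)$ contributes a nonzero cokernel $H^{i}_{Wh}(T,\Z)\otimes\R/\Q$.

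For part 1 I must, in addition, show that these cokernels vanish and that $R\Gamma_{et}(\Y,\Z)$ lies in $\mathbf{D}^b(\mathrm{FLCA})$. I first reduce to $\Y$ integral, since a normal Noetherian scheme is a finite disjoint union of integral ones and cohomology splits accordingly. The cokernel $H^{i}_{et}(\Y,\Z)\otimes\R/\Q$ vanishes for $i\ge1$ as soon as $H^{i}_{et}(\Y,\Z)$ is torsion there, and this follows from $H^{i}_{et}(\Y,\Q)=0$ for $i\ge1$: the étale and Zariski cohomology of the constant sheaf $\Q$ agree (one has $R^{q}\epsilon_{*}\Q=0$ for $q\ge1$, the stalks being the $\Q$-coefficient cohomology of strictly henselian local rings), and $\Q$ is flasque on the irreducible scheme $\Y$. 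Together with $H^{0}_{et}(\Y,\Z)=\Z$ and $H^{1}_{et}(\Y,\Z)=\Hom_{\mathrm{cont}}(\pi_1(\Y),\Z)=0$ (as $\pi_1(\Y)$ is profinite), this promotes the injections of the first paragraph to isomorphisms $H^{i}_{et}(\Y,\Q/\Z)\xrightarrow{\sim}H^{i}_{et}(\Y,\R/\Z)$ for all $i\ge1$.

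It remains to place $R\Gamma_{et}(\Y,\Z)$ in $\mathbf{D}^b(\mathrm{FLCA})$. Boundedness follows from the finite étale cohomological dimension of a finite-type $\O_K$-scheme together with the identification $H^{i}_{et}(\Y,\Z)\simeq H^{i-1}_{et}(\Y,\Q/\Z)$ for $i\ge2$ obtained above, and the finite-rank condition from the finiteness of $H^{i}_{et}(\Y,\Z/m)$ for every $m$, which makes each $H^{i-1}_{et}(\Y,\Q/\Z)=\varinjlim_{m}H^{i-1}_{et}(\Y,\Z/m)$ a torsion group of cofinite type, while $H^{0}_{et}(\Y,\Z)=\Z$ is finitely generated. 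I expect this last step to be the main obstacle: the homological comparison is purely formal, but controlling the ranks — in particular bounding the $p$-primary corank uniformly in the prime — requires the finiteness theorems for étale cohomology of schemes of finite type over the $p$-adic ring $\O_K$, which go beyond what is established in the excerpt.
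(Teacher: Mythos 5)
Your proposal is correct in substance, and for part 2 it takes a genuinely different route from the paper. The paper proves part 2 by comparing the long exact coefficient sequences for $\Z\to\Q\to\Q/\Z$ and $\Z\to\R\to\R/\Z$: the kernel of $H_{Wh}^i(T,\Q)\to H_{Wh}^i(T,\R)$ surjects onto the kernel in question, and the former map is injective ``because the groups are finitely generated.'' Your argument via the universal coefficient sequence and the flatness of $\R/\Q$ is cleaner and strictly more general: it uses no finiteness hypothesis at all (indeed $M\otimes\Q\to M\otimes\R$ is injective for \emph{every} abelian group $M$, since $M\otimes\Q$ only sees the torsion-free quotient, which is flat; so even the paper's appeal to finite generation is more than is needed), and it identifies the cokernel as $H^i(C)\otimes\R/\Q$. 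That identification is exactly what makes part 1 fall out: once $H^i_{et}(\Y,\Z)$ is torsion for $i\geq 1$, the cokernel vanishes. For part 1 your skeleton coincides with the paper's --- normality gives $H^i_{et}(\Y,\Q)=0$ for $i\geq1$, hence $H^i_{et}(\Y,\Z)$ is a quotient of $H^{i-1}_{et}(\Y,\Q/\Z)$ and $H^i_{et}(\Y,\R)=0$ --- the paper then reads off the isomorphism directly from the two coefficient sequences rather than from a cokernel computation. Both you and the paper silently identify the underlying abelian groups of the topological derived tensor product with the algebraic $C\otimes^L(-)$, so that is not a gap relative to the paper.

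Two caveats. First, your justification of $H^i_{et}(\Y,\Q)=0$ is misstated: for $\epsilon:\Y_{et}\to\Y_{Zar}$ the stalks of $R^q\epsilon_*\Q$ are the \'etale cohomology of the \emph{Zariski} local rings, not of strictly henselian ones, so the asserted vanishing is not immediate from your sentence. The standard argument --- and the place where normality is genuinely used --- goes through the generic point $g:\eta\to\Y$: normality gives $g_*\Q\simeq\Q$, and $R^qg_*\Q=0$ for $q\geq1$ because profinite group cohomology with coefficients in the uniquely divisible discrete module $\Q$ is torsion, hence zero, in positive degrees; then the Leray spectral sequence gives $H^i_{et}(\Y,\Q)\simeq H^i(G_{k(\eta)},\Q)=0$ for $i\geq1$. (The paper simply asserts this vanishing, so this costs you nothing relative to it.) Second, the step you flag as ``the main obstacle'' --- cofiniteness of $H^{i}_{et}(\Y,\Q/\Z)$, equivalently finiteness of the groups $H^i_{et}(\Y,\Z/m)$ --- is precisely what the paper settles with a one-line citation of Deligne's finiteness theorem \cite[Thm.\ 1.1]{Finitude}; your reduction to that input is exactly right, and it is a citation rather than a missing idea. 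Your specific worry about the $p$-primary corank is nonetheless well placed: Deligne's theorem assumes $m$ invertible on the base, so the citation as given covers the prime-to-$p$ part, while for the proper $\Y$ to which the lemma is applied in the paper the $p$-part is supplied by proper base change.
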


\begin{proof}
1) Since $\mathcal{Y}$ is normal, then $H_{et}^0(\mathcal{Y},\Z)=\Z$, and 
for $i\geq 1$ we have $H_{et}^i(\mathcal{Y},\Q)=0$. Thus 
$H_{et}^i(\mathcal{Y},\Z)$ is the image of the $H_{et}^{i-1}(\mathcal{Y},\Q/\Z)$,
a group of cofinite type \cite[Thm.\ 1.1]{Finitude}. Hence  
$R\Gamma_{et}(\Y,\Z)$ is in $\mathbf{D}^b(\mathrm{FLCA})$ and 
$H^{i}_{et}(\mathcal{Y},\R)=0$ for $i\geq 1$. We can conclude with 
the map of long exact coefficient sequences
$$\begin{CD}
\cdots H_{et}^i(\mathcal{Y},\Z)@>>> H_{et}^i(\mathcal{Y},\Q)@>>> 
H_{et}^i(\mathcal{Y},\Q/\Z)@>>>H_{et}^{i+1}(\mathcal{Y},\Z)\cdots\\
@|@VVV@VVV@|\\
\cdots H_{et}^i(\mathcal{Y},\Z)@>>> H_{et}^i(\mathcal{Y},\R)@>>> 
H_{et}^i(\mathcal{Y}, \R/\Z)@>>>H_{et}^{i+1}(\mathcal{Y},\Z)\cdots
\end{CD}$$

2) Comparing the analog coefficient sequences, we see that the kernel
of $H_{Wh}^i(T,\Q)\to H_{Wh}^i(T,\R)$ surjects
onto the kernel in question, but this map is injective because the groups
are finitely generated.
\end{proof}

\begin{prop}\label{sameH1}
Under resolution of singularities for schemes of dimension at most $\dim T$,
we have
$$H_{W}^1(T,\R/\Z)\simeq H_{Wh}^1(T,\R/\Z)$$
for any $T$ of finite type over a finite field.
\end{prop}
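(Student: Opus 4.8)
The plan is to prove the stronger statement that the comparison morphism $R\Gamma_{W}(T,\R/\Z)\to R\Gamma_{Wh}(T,\R/\Z)$, induced by the refinement of the \'etale topology by the $eh$-topology, is an isomorphism on $H^1$, and to do so by showing that the cofiber of the underlying integral map becomes zero after $\underline\otimes^L\R/\Z$. First I would work over $\bar k$, where the refinement gives a $W_k$-equivariant map $R\Gamma_{et}(\bar T,\Z)\to R\Gamma_{eh}(\bar T,\Z)$ with $\bar T=T\times_k\bar k$; call its cofiber $C$. Applying $R\Gamma(W_k,-)$ produces the cofiber $D$ of $R\Gamma_W(T,\Z)\to R\Gamma_{Wh}(T,\Z)$, so it suffices to check that $D\,\underline\otimes^L\,\R/\Z$ vanishes in the degrees computing $H^1$.

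The key step is to identify $C$ as a complex of $\Q$-vector spaces, i.e. $C\otimes^L\Z/n=0$ for every $n$. Equivalently, the comparison $R\Gamma_{et}(\bar T,\Z/n)\to R\Gamma_{eh}(\bar T,\Z/n)$ is an equivalence. This holds because \'etale cohomology with torsion coefficients satisfies descent for abstract blow-up squares (proper base change, i.e. proper cdh-descent for torsion \'etale cohomology); resolution of singularities in dimension $\le\dim T$ is exactly what lets one present $eh$-cohomology through such squares and thereby identify it with the torsion \'etale cohomology. Since $\otimes^L\Z/n$ commutes with $R\Gamma(W_k,-)$, the cofiber $D$ is rational as well.

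The mechanism reconciling the two sides is then the computation in $\mathbf{D}^b(\mathrm{LCA})$ that $\Q\,\underline\otimes^L\,\R/\Z=0$: writing $\R/\Z=\mathrm{cofib}(\Z\to\R)$ gives $\Q\,\underline\otimes^L\,\R/\Z=\mathrm{cofib}(\Q\hookrightarrow\R)$, whose cokernel $\R/\Q$ is an indiscrete group and hence vanishes in $\mathbf{D}^b(\mathrm{LCA})$. Consequently $D\,\underline\otimes^L\,\R/\Z=0$, and $H^1_W(T,\R/\Z)\to H^1_{Wh}(T,\R/\Z)$ is an isomorphism. The same fact reads $\Q/\Z\,\underline\otimes^L\,\R/\Z\simeq\R/\Z[1]$ (rather than $\Q/\Z[1]$), which explains concretely why a torsion class in $H^2_W(T,\Z)$ on the \'etale side and a free class in $H^1_{eh}(\bar T,\Z)$ on the $eh$-side each contribute the same copy of $\R/\Z$ to $H^1(-,\R/\Z)$.

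The hard part will be the bookkeeping in $\mathbf{D}^b(\mathrm{LCA})$ forced by the fact that $R\Gamma_W(T,\Z)$ need not be perfect (for an irreducible nodal curve, for instance, $H^2_W(T,\Z)\simeq\Q/\Z$), so that $R\Gamma_W(T,\R/\Z)$ is only defined through a truncation $\tau^{>a}$ and one must verify that the identifications above are valid topologically, not merely on underlying abelian groups, in the degree-$1$ range; this is precisely where restricting to $H^1$ and the finite-rank hypothesis on $R\Gamma_{Wh}(T,\Z)$ are used. A more computational alternative would avoid the cofiber and instead compare the $\Q/\Z$-, $\R$- and $\R/\Z$-coefficient sequences on the two sides, using $H^{*}_W(T,\Q/\Z)\simeq H^{*}_{Wh}(T,\Q/\Z)$ together with the injection of the preceding lemma, but the cofiber argument isolates the essential point most cleanly.
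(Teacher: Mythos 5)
Your reduction is correct up to the very last step: under resolution of singularities the comparison $R\Gamma_{et}(\bar T,\Z/n)\to R\Gamma_{eh}(\bar T,\Z/n)$ is indeed an equivalence (proper base change gives abstract blow-up descent with torsion coefficients), so the cofiber $D$ of $R\Gamma_{W}(T,\Z)\to R\Gamma_{Wh}(T,\Z)$ has uniquely divisible cohomology. The fatal gap is the assertion that $\Q\,\underline{\otimes}^L\,\R/\Z=0$ in $\mathbf{D}^b(\mathrm{LCA})$. In the quasi-abelian formalism of Hoffmann--Spitzweck and Schneiders that the paper works in, the cofiber of $\Q\hookrightarrow\R$ is a \emph{nonzero} object of the left heart $\mathcal{LH}(\mathrm{LCA})$: a two-term complex $[\Q\to\R]$ is strictly exact only if the map is strict, and the map from discrete $\Q$ onto its dense, non-closed image in $\R$ is not strict. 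Equivalently, if this cofiber vanished then $\Q\to\R$ would be an equivalence in $\mathbf{D}^b(\mathrm{LCA})$, contradicting $\mathrm{Hom}_{\mathbf{D}^b(\mathrm{LCA})}(\Z,\Q)=\Q\neq\R=\mathrm{Hom}_{\mathbf{D}^b(\mathrm{LCA})}(\Z,\R)$, since $\mathrm{LCA}\hookrightarrow\mathcal{LH}(\mathrm{LCA})$ is fully faithful. The heuristic ``indiscrete quotients are zero'' is precisely what the left heart is built to avoid: non-Hausdorff cokernels survive as genuine objects, and the paper's long exact sequences depend on this. Your companion identity $\Q/\Z\,\underline{\otimes}^L\,\R/\Z\simeq\R/\Z[1]$ fails for the same reason: resolving $\R/\Z$ by the strict sequence $\Z\to\R\to\R/\Z$ gives $\Q/\Z\,\underline{\otimes}^L\,\R/\Z\simeq\Q/\Z[1]$ (the algebraic $\Tor$, discretely topologized), and this is the value the paper itself uses, e.g.\ in the proof of Proposition \ref{pi1corepresent}, where $H_{et}^{i}(\X,\R/\Z)\simeq H_{et}^{i+1}(\X,\Z)\simeq H_{et}^{i}(\X,\Q/\Z)$ for $i\geq 1$; were your identity true, those cofinite-type torsion groups would instead come out as products of copies of $\R/\Z$. (As an aside, your motivating example is also off: for an irreducible nodal curve with split node one has $H^1_{et}(\bar T,\Z)\simeq\Z$ --- the infinite chain of projective lines is a $\Z$-torsor --- so $H^2_W(T,\Z)\simeq\Z$, not $\Q/\Z$.) Consequently $D\,\underline{\otimes}^L\,\R/\Z$ does not vanish, and your long exact sequence does not by itself yield an isomorphism on $H^1$.

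The rationality of $D$ is morally the same input as the paper's (both rest on proper base change for torsion coefficients), but the paper converts it into the degree-$1$ statement by hand rather than by a cofiber-vanishing argument: for normal $T$ it first proves $H^1_{eh}(S,\Z)=0$ for integral $S$ (Hochschild--Serre for the generic point), so that $H^1_W(T,\Z)\simeq H^1_{Wh}(T,\Z)\simeq\Z^{\pi_0(T)}$, then compares the two extensions $0\to(\R/\Z)^{\pi_0(T)}\to H^1(-,\R/\Z)\to\Tor H^{2}(-,\Z)\to 0$, whose right-hand terms agree by the $\Q/\Z$-coefficient isomorphism; the general case follows by induction on $\dim T$ using the abstract blow-up (normalization) Mayer--Vietoris sequences. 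This is essentially the ``computational alternative'' you sketch at the end, but note it needs the normal-case anchor and the blow-up induction, not just the coefficient sequences. Finally, no soft argument valid in all degrees should be expected: $H^2_W(T,\Z)$ and $H^2_{Wh}(T,\Z)$ can genuinely differ for normal proper surfaces, so the rational discrepancy $D$ really does contribute nonzero left-heart objects after $\underline{\otimes}^L\,\R/\Z$; the content of the proposition is that these contributions miss degree $1$, which is exactly what the paper's dévissage verifies.
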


\begin{proof}
Both sides do not change if we replace $T$ by $T^{red}$, so we
assume that $\X_s$ is reduced.
We first claim that the result holds for normal $T$. We first show that
for an integral scheme $S$ over a field, one has $H^1_{eh}(S,\Z)=0$. 
Consider the Hochschild-Serre spectral sequence 
$H^s_{eh}(S,Rj_*\Z)\Rightarrow H^{s+t}_{eh}(\eta,\Z)$
associated to the inclusion of the generic point $j:\eta\to S$.
Since $j_*\Z\cong \Z$,
we obtain an inclusion $H^1_{eh}(S,\Z)\hookrightarrow H^{1}_{eh}(\eta,\Z)$,
but the latter group vanishes because every abstract blow-up cover of a field
splits, so that this group agrees with the Galois cohomology of $\Z$, which
is zero. Now we obtain
$$\Z^{\pi_0(T)}\simeq H^0_{et}(\bar T,\Z)_W\simeq H_{W}^1(T,\Z)$$
and similarly $H_{Wh}^1(T,\Z)\simeq \Z^{\pi_0(T)}$. Thus there
is short exact sequences
$$\begin{CD}
0@>>> (\Q/\Z)^{\pi_0(\Y)}@>>> 
H_{W}^1(T,\Q/\Z)@>>>\Tor H_{W}^{2}(T,\Z)@>>>0\\
@.@|@VVV@VVV\\
0@>>>(\Q/\Z)^{\pi_0(T)}@>>> 
H_{Wh}^1(T,\Q/\Z)@>>>\Tor H_{Wh}^{2}(T,\Z)@>>>0.
\end{CD}$$
The proper base-change theorem implies that middle map, hence that the right hand
map is an isomorphism. 
As $H_{W}^{2}(T,\Z)$ is finitely generated discrete, the kernel of
$H_{W}^{2}(T,\Z)\to H_{W}^{2}(T,\R)\simeq H_{W}^{2}(T,\Z)\underline{\otimes} \R$ is 
the torsion subgroup of $H_{W}^{2}(T,\Z)$ as an abelian group, 
and similarly for $H_{W}^1(T,\Z)$.
Thus we obtain a map of short exact sequences 
$$\begin{CD}
0@>>> (\R/\Z)^{\pi_0(T)}@>>> 
H_{W}^1(T,\R/\Z)@>>>\Tor H_{W}^{2}(T,\Z)@>>>0\\
@.@|@VVV@|\\
0@>>>(\R/\Z)^{\pi_0(T)}@>>> 
H_{Wh}^1(T,\R/\Z)@>>>\Tor H_{Wh}^{2}(T,\Z)@>>>0
\end{CD}$$
and conclude that the middle map is an isomorphism.

In the general case let $T'\to T$ be the normalization, 
$Y$ the closed subscheme with the reduced structure
of smaller dimension where $T'\to T$ is not an isomorphism, 
and $Y'=Y\times_{T}T'$. 
Consider the abstract blow-up sequence of \cite[Lemma 3.4]{Geisser-Morin-21},
$$\begin{CD}
H_{W}^i(T,\R/\Z)@>>> H_{W}^i(T',\R/\Z)\oplus H_{W}^i(Y,\R/\Z)
@>>>H_{W}^i(Y',\R/\Z)\\
@VVV@VVV@VVV\\
H_{Wh}^i(T,\R/\Z)@>>> H_{Wh}^i(T',\R/\Z)\oplus H_{Wh}^i(Y,\R/\Z)
@>>> H_{Wh}^i(Y',\R/\Z)
\end{CD}$$
and proceed by induction on $\dim T$, using that
$H_{W}^0(S,\R/\Z)\simeq H_{Wh}^0(S,\R/\Z)\simeq (\R/\Z)^{\pi_0(S)}$
for any scheme $S$ of finite type over a finite field.
\end{proof}

\begin{prop}\label{pi1corepresent}
Assume resolution of singularities for schemes over $\kappa(s)$ of dimension 
$\leq d-1$, or assume that $\X$ has good or  strictly semi-stable reduction.
Then we have a canonical isomorphism of locally compact groups
$$H_{ar}^1(\X_K,\R/\Z)\simeq \underline{\Hom}(\pi^{ab}_1(\X_K)_W, \R/\Z).$$
 \end{prop}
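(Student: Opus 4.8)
The plan is to deduce the corepresentability of $\pi_1^{ab}(\X_K)_W$ on the generic fiber from the corresponding statement on the closed fiber, by comparing the fiber sequence defining $R\Gamma_{ar}(\X_K,\Z)$ with the Pontryagin dual of the extension defining $\pi_1^{ab}(\X_K)_W$. Applying the exact anti-equivalence $(-)^D=\underline{\Hom}(-,\R/\Z)$ to the extension
$$0\to I^{ab}(\X)\to \pi_1^{ab}(\X_K)_W\to \Pi^{ab}_1(\X_s)_W\to 0$$
yields a short exact sequence
$$0\to (\Pi^{ab}_1(\X_s)_W)^D\to (\pi_1^{ab}(\X_K)_W)^D\to (I^{ab}(\X))^D\to 0$$
of locally compact groups. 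On the other side, tensoring the fiber sequence
$$R\Gamma_W(\X_s,Ri^!\Z)\to R\Gamma_{ar}(\X,\Z)\to R\Gamma_{ar}(\X_K,\Z)$$
with $\R/\Z$ and using $R\Gamma_{ar}(\X,\Z)=R\Gamma_{ar}(\X_s,\Z)$ produces a long exact sequence in $\mathcal{LH}(\mathrm{FLCA})$ relating $H^1_{ar}(\X_K,\R/\Z)$ to $H^1_{ar}(\X_s,\R/\Z)$ and to the connecting terms coming from $R\Gamma_W(\X_s,Ri^!\Z)$. Using functoriality of both constructions I would produce a natural map between the two sequences and reduce the proposition to identifying the two outer terms.

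First I would treat the closed fiber. By \cite{Flach-Morin-12}, $\Pi^{ab}_1(\X_s)_W$ is the abelianization of the fundamental group of the Weil-\'etale topos $\X_{s,W}$, and since $H^1$ of a topos with coefficients in $\R/\Z$ classifies $\R/\Z$-torsors, this gives a canonical isomorphism
$$H^1_{ar}(\X_s,\R/\Z)\simeq (\Pi^{ab}_1(\X_s)_W)^D.$$
The hypothesis that $\X$ has good or strictly semi-stable reduction (or resolution of singularities) enters here: it guarantees via Proposition \ref{prop-finiteranks} that $R\Gamma_{ar}(\X_s,\Z)$ lies in $\mathbf{D}^b(\mathrm{FLCA})$, so that the compact-open topology is well behaved and Pontryagin duality applies, while Proposition \ref{sameH1} shows that the left-hand side is insensitive to whether $R\Gamma_{ar}(\X_s,\Z)$ is computed as Weil-\'etale or $Wh$-cohomology.

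Next I would identify the inertia term. Since $I^{ab}(\X)$ is profinite, $(I^{ab}(\X))^D=\Hom(I^{ab}(\X),\Q/\Z)$ is a discrete torsion group. The classical corepresentability $H^1_{et}(\X_K,\Q/\Z)\simeq\Hom(\pi_1^{ab}(\X_K),\Q/\Z)$, applied to the profinite extension $0\to I^{ab}(\X)\to \pi_1^{ab}(\X_K)\to \pi_1^{ab}(\X_s)\to 0$ together with $\pi_1^{ab}(\X)\simeq\pi_1^{ab}(\X_s)$, identifies $(I^{ab}(\X))^D$ with $\mathrm{coker}(H^1_{et}(\X_s,\Q/\Z)\to H^1_{et}(\X_K,\Q/\Z))$. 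The comparison $H^i_{et}(\Y,\Q/\Z)\simeq H^i_{et}(\Y,\R/\Z)$ for normal $\Y$ over $\O_K$, established in the Lemma above, lets me replace $\Q/\Z$ by $\R/\Z$ throughout, so that this cokernel is computed by the $\R/\Z$-coefficient \'etale fiber sequence, which maps to the arithmetic one via the natural transformation $R\Gamma(G_{\kappa(s)},-)\to R\Gamma(W_{\kappa(s)},-)$. I would then match this cokernel with the appropriate connecting term of the arithmetic sequence, using that the Weil-versus-Galois discrepancy between $R\Gamma_{et}$ and $R\Gamma_{ar}$ is concentrated in the closed-fiber term $H^1_{ar}(\X_s,\R/\Z)$ and hence does not affect the torsion inertia contribution.

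The main obstacle I expect is precisely this last identification: controlling the integral local-cohomology complex $R\Gamma_W(\X_s,Ri^!\Z)$ well enough to show that, after tensoring with $\R/\Z$, its contribution to degree one dies in $H^1_{ar}(\X_s,\R/\Z)$, so that $H^1_{ar}(\X_s,\R/\Z)\hookrightarrow H^1_{ar}(\X_K,\R/\Z)$, and that the resulting cokernel is exactly $\Hom(I^{ab}(\X),\Q/\Z)$. One must carry out this matching while keeping track of the locally compact topologies so that the final bijection is an isomorphism in $\mathbf{D}^b(\mathrm{FLCA})$ and not merely of abstract groups. Once the two short exact sequences are identified term by term, the five lemma gives the desired isomorphism $H^1_{ar}(\X_K,\R/\Z)\simeq (\pi_1^{ab}(\X_K)_W)^D=\underline{\Hom}(\pi_1^{ab}(\X_K)_W,\R/\Z)$, and canonicity follows from the naturality of all the maps involved.
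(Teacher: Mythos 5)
Your proposal has the right ingredients but a genuine gap at exactly the point you yourself flag as ``the main obstacle,'' and that step is not a formality. Because $Ri^!\Z$ is torsion, the local term $L:=R\Gamma_W(\X_s,Ri^!\Z)\simeq R\Gamma_{et}(\X_s,Ri^!\Z)$ satisfies $L\underline{\otimes}^L\R/\Z\simeq L[1]$, so the long exact sequence you propose to use reads
$$\cdots\rightarrow H^2(L)\rightarrow H^1_{ar}(\X_s,\R/\Z)\rightarrow H^1_{ar}(\X_K,\R/\Z)\rightarrow H^3(L)\rightarrow H^2_{ar}(\X_s,\R/\Z)\rightarrow\cdots$$
and you must prove both that the first map vanishes and that the image of the boundary in $H^3(L)$ is exactly $(I^{ab}(\X))^D$. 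Nothing in your sketch accomplishes this: the special fiber is in general only a normal crossing scheme, so no purity computation of $Ri^!\Z$ is available, and relating $H^j(L)$ to inertia requires an additional duality argument (compare the identification $H^{1}_{et,\X_s}(\X,\Q/\Z)^D\simeq\ker(\pi^{ab}_1(\X)^D\rightarrow\pi^{ab}_1(\X_K)^D)^D$ used later in the paper, which itself rests on the duality theorem). Your correct observation that the Weil-versus-Galois discrepancy is invisible on the (torsion) local term does not by itself yield either the injectivity or the cokernel identification, and the topological bookkeeping you defer is also where the paper has to work: the torsor-theoretic identification $H^1_{ar}(\X_s,\R/\Z)\simeq(\Pi^{ab}_1(\X_s)_W)^D$ is a priori only an isomorphism of abstract groups, and the paper upgrades it to a topological one via \cite[Prop.\ 2.4]{Geisser-Schmidt-17} for $A=\Z,\R,\R/\Z$, Proposition \ref{sameH1}, and the remark that an $\R$-linear bijection of finite-dimensional real vector spaces is bicontinuous.

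The paper sidesteps your obstacle entirely by choosing a different presentation of $\pi_1^{ab}(\X_K)_W$: instead of the inertia extension, it dualizes the fiber-product (pullback) sequence $0\rightarrow\pi^{ab}_1(\X_K)_W\rightarrow\pi^{ab}_1(\X_K)\oplus\Pi^{ab}_1(\X_s)_W\rightarrow\pi^{ab}_1(\X)\rightarrow 0$, which is strictly exact because the first map is a closed embedding and the second an open surjection. Dually, the same equivalence $R\Gamma_{et}(\X_s,Ri^!\Z)\stackrel{\sim}{\rightarrow}R\Gamma_W(\X_s,Ri^!\Z)$ that you note is used to conclude that the right-hand square of the morphism of localization fiber sequences is a \emph{push-out} in $\mathbf{D}^b(\mathrm{FLCA})$; tensoring with $\R/\Z$ then yields a Mayer--Vietoris sequence
$$0\rightarrow H_{et}^1(\X,\R/\Z)\rightarrow H_{et}^1(\X_K,\R/\Z)\oplus H_{ar}^1(\X_s,\R/\Z)\rightarrow H_{ar}^1(\X_K,\R/\Z)\rightarrow 0$$
in which the local term $L$ has cancelled, with left exactness coming from proper base change together with the injectivity of $H_{ar}^i(\X_s,\Q/\Z)\hookrightarrow H_{ar}^i(\X_s,\R/\Z)$ (finite generation of $H_{ar}^i(\X_s,\Z)$). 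This sequence is matched term by term with the Pontryagin dual of the pullback sequence, whose first map is $s^D$, and the isomorphism of cokernels is the proposition. To repair your argument, either reorganize it along these lines, or supply the missing vanishing and identification for $H^2(L)$ and $H^3(L)$ by hand --- which would essentially amount to redoing part of the duality theory of \cite{Geisser-Morin-21} that the push-out trick renders unnecessary.
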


\begin{proof} 
Assume resolution of singularities for schemes over $\kappa(s)$ of dimension 
$\leq d-1$. By the remark after Proposition \ref{prop-finiteranks}, 
$R\Gamma_{ar}(\X_K,\R/\Z)$ 
is well defined. One has a morphism of fiber sequences 
\[ \xymatrix{
R\Gamma_{et}(\X_s,Ri^{!}\Z) \ar[r]\ar[d]^{\simeq}&R\Gamma_{et}(\X,\Z) \ar[r]\ar[d]^{}&R\Gamma_{et}(\X_K,\Z)\ar[d]\\
R\Gamma_{W}(\X_s,Ri^{!}\Z) \ar[r]& R\Gamma_{ar}(\X_s,\Z) \ar[r] & R\Gamma_{ar}(\X_K,\Z) 
}
\]
where the left vertical map is an equivalence since $Ri^{!}\Z$ is torsion. 
It follows that the square on the right is a push-out square of (bounded) 
discrete complexes. The complexes of the top row are of finite ranks, since 
their cohomology is torsion in degrees $>0$ and finite modulo $l$ for all $l$. 
The complex  of abelian groups $R\Gamma_{ar}(\X_s,\Z)$ is perfect by 
\cite[Prop.\ 3.2, 3.5]{Geisser-Morin-21}. 
Hence the right square above is a push-out in $\mathbf{D}^b(\mathrm{FLCA})$. 
Applying $(-)\underline{\otimes}^L\R/\Z$, we obtain the push-out square
\[ \xymatrix{
R\Gamma_{et}(\X,\R/\Z) \ar[r]\ar[d]^{}&R\Gamma_{et}(\X_K,\R/\Z)\ar[d]\\
R\Gamma_{ar}(\X_s,\R/\Z) \ar[r] & R\Gamma_{ar}(\X_K,\R/\Z) 
}
\]
hence an exact sequence in $\mathcal{LH}(\mathrm{FLCA})$
$$\cdots \rightarrow H_{et}^1(\X, \R/\Z) \stackrel{t}{\rightarrow} 
H_{et}^1(\X_K, \R/\Z)\oplus  H_{ar}^1(\X_s, \R/\Z)\rightarrow  H_{ar}^1(\X_K, \R/\Z)$$
$$\rightarrow H_{et}^2(\X, \R/\Z)\rightarrow  
H_{et}^2(\X_K, \R/\Z)\oplus  H_{ar}^2(\X_s, \R/\Z).$$
We have $H_{et}^i(\X, \Q)=0$ for $i\geq 1$ since $\X$ is normal. It follows that 
$H_{et}^i(\X, \R):=H^{i}(R\Gamma_{et}(\X,\Z)\underline{\otimes}^L\R)=0$
for $i\geq 1$, hence 
$$H_{et}^{i}(\X, \R/\Z)\simeq H_{et}^{i+1}(\X, \Z)
\simeq H_{et}^{i}(\X, \Q/\Z)$$
for any $i\geq 1$. 
Moreover the morphism
$$H_{ar}^i(\X_s, \Q/\Z)\hookrightarrow H_{ar}^i(\X_s, \R/\Z)$$
is injective since the groups $H_{ar}^i(\X_s, \Z)$ are finitely generated. 
In view of the isomorphisms
$$H_{et}^i(\X, \Z/m\Z)\stackrel{\sim}{\rightarrow} 
H_{et}^i(\X_s, \Z/m\Z) \stackrel{\sim}{\rightarrow} H_{ar}^i(\X_s, \Z/m\Z)$$
given by proper base change, we see that the  morphism
$$H_{et}^i(\X, \R/\Z)\simeq H_{et}^i(\X, \Q/\Z) \stackrel{\sim}{\rightarrow} 
H_{ar}^i(\X_s, \Q/\Z)\hookrightarrow H_{ar}^i(\X_s, \R/\Z)$$
is injective for $i\geq 1$.
We obtain an exact sequence in $\mathcal{LH}(\mathrm{LCA})$
\begin{equation}\label{coh}
0\rightarrow H_{et}^1(\X, \R/\Z)\stackrel{t}{\rightarrow}  
H_{et}^1(\X_K, \R/\Z)\oplus  H_{ar}^1(\X_s, \R/\Z)\rightarrow  
H_{ar}^1(\X_K, \R/\Z)\rightarrow 0.
\end{equation}
On the other hand, by definition of $\pi^{ab}_1(\X_K)_{W}$, 
we have a strictly exact sequence 
\begin{equation*}
0\rightarrow \pi^{ab}_1(\X_K)_{W}\stackrel{\iota}{\rightarrow} 
\pi^{ab}_1(\X_K)\oplus \Pi^{ab}_1(\X_s)_{W}\stackrel{s}{\rightarrow} 
\pi^{ab}_1(\X)\rightarrow 0
\end{equation*}
since $\iota$ is a closed embedding by definition, and since $s$ is an open surjection (as $\pi^{ab}_1(\X_K)\rightarrow\pi^{ab}_1(\X)$ is an open surjection), 
see \cite[Section 1]{Hoffmann-Spitzweck-07}. 
The dual of a strictly exact sequence in $\mathrm{LCA}$ is also strictly exact, 
and the fully faithful functor 
$\mathrm{LCA}\hookrightarrow \mathcal{LH}(\mathrm{LCA})$ 
sends a strictly exact sequence in $\mathrm{LCA}$ to an exact sequence in the 
abelian category $\mathcal{LH}(\mathrm{LCA})$, 
see \cite[Cor.\  1.2.28]{Schneiders99}.
Hence we have an exact sequence
$$
0\rightarrow  \underline{\Hom}(\pi^{ab}_1(\X), \R/\Z)
\stackrel{s^D}{\rightarrow}  
\underline{\Hom}(\pi^{ab}_1(\X_K), \R/\Z)\oplus  
\underline{\Hom}(\Pi^{ab}_1(\X_s)_{W}, \R/\Z)$$
$$\rightarrow  \underline{\Hom}(\pi^{ab}_1(\X_K)_{W}, \R/\Z) \rightarrow 0.$$
in $\mathcal{LH}(\mathrm{LCA})$.  

By \cite[Prop.\  2.4]{Geisser-Schmidt-17} and Proposition \ref{sameH1}, 
we have canonical isomorphisms of discrete abelian groups
$$\Hom(\Pi^{ab}_1(\X_s)_{W},A)\stackrel{\sim}{\rightarrow} 
H_{W}^1(\X_s,A)  \stackrel{\sim}{\rightarrow} H_{Wh}^1(\X_s,A)$$
for $A=\Z, \R, \R/\Z$. Moreover, the map $\Hom(\Pi^{ab}_1(\X_s)_{W},\R)\rightarrow H_{W}^1(\X_s,\R)$ 
is an $\R$-linear map between finite dimensional $\R$-vector spaces, hence bicontinuous with respect 
with the Euclidean topology. This implies that 
$\underline{\Hom}(\Pi^{ab}_1(\X_s)_{W}, \R)\rightarrow H_{ar}^1(\X_s, \R)$ 
is an isomorphism of topological groups and then it follows that 
$\underline{\Hom}(\Pi^{ab}_1(\X_s)_{W}, \R/\Z)\rightarrow H_{ar}^1(\X_s, \R/\Z)$ is continuous as well.
We obtain a canonical isomorphism 
$H_{ar}^1(\X_s, \R/\Z)\simeq \underline{\Hom}(\Pi^{ab}_1(\X_s)_{W}, \R/\Z)$
of compact abelian groups.
Moreover, the map $t$ in (\ref{coh}) is canonically identified with $s^D$.  
We obtain a canonical isomorphism between their cokernels:
$$\underline{\Hom}(\pi^{ab}_1(\X_K)_{W}, \R/\Z) 
\stackrel{\sim}{\rightarrow} H_{ar}^1(\X_K,\R/\Z).$$

\end{proof}

\subsection{Class field theory}
\begin{thm}\label{ICFT} Assume that $\X$ has good or strictly semi-stable reduction 
and that $R\Gamma_W(\X_s,\Z^c(0))$ is a perfect complex of abelian groups. 
Then there exists a unique functorial isomorphism of locally compact groups
\begin{equation*}
\underline{\rec}:H^{2d-1}_{ar}(\X_{K},\Z(d))
\stackrel{\sim}{ \longrightarrow} \pi_1^{ab}(\X_K)_{W}
\end{equation*}
inducing
$$H^{2d-1}_{et}(\X_{K},\widehat{\Z}(d))\stackrel{\sim}{ \longrightarrow} 
\pi_1^{ab}(\X_K)$$
after profinite completion.
\end{thm}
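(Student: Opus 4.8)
The plan is to realize $\underline{\rec}$ as a composite of canonical isomorphisms built from the duality theorem and the co-representability result already in hand. First I would apply the integral duality \cite[Cor.\ 5.13]{Geisser-Morin-21}. Its hypotheses are met: by Proposition \ref{prop-finiteranks}, the assumptions that $\X$ has good or strictly semi-stable reduction and that $R\Gamma_W(\X_s,\Z^c(0))$ is perfect force both $R\Gamma_{ar}(\X_K,\Z(d))$ and $R\Gamma_{ar}(\X_K,\Z)$ to lie in $\mathbf{D}^b(\mathrm{FLCA})$, so the duality applies. It provides, for each $i$, a perfect pairing of locally compact groups between $H^i_{ar}(\X_K,\Z(d))$ and $H^{2d-i}_{ar}(\X_K,\R/\Z)$; taking $i=2d-1$, so that the complementary degree is $1$, yields a canonical topological isomorphism
$$H^{2d-1}_{ar}(\X_K,\Z(d)) \stackrel{\sim}{\longrightarrow} \left(H^1_{ar}(\X_K,\R/\Z)\right)^D.$$

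Next I would substitute Proposition \ref{pi1corepresent}, which gives a canonical isomorphism of locally compact groups $H^1_{ar}(\X_K,\R/\Z) \simeq \underline{\Hom}(\pi^{ab}_1(\X_K)_W,\R/\Z) = \left(\pi^{ab}_1(\X_K)_W\right)^D$. Since $\pi^{ab}_1(\X_K)_W$ is locally compact, being an extension of a finitely generated group by a profinite group as recorded after its definition, Pontryagin double duality identifies $\left(\left(\pi^{ab}_1(\X_K)_W\right)^D\right)^D$ with $\pi^{ab}_1(\X_K)_W$. Composing these steps,
$$H^{2d-1}_{ar}(\X_K,\Z(d)) \stackrel{\sim}{\longrightarrow} \left(H^1_{ar}(\X_K,\R/\Z)\right)^D \stackrel{\sim}{\longrightarrow} \left(\left(\pi^{ab}_1(\X_K)_W\right)^D\right)^D \stackrel{\sim}{\longrightarrow} \pi^{ab}_1(\X_K)_W,$$
and I would define $\underline{\rec}$ to be this composite. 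Functoriality is inherited from that of the duality pairing (compatible with pushforward, cf.\ Remark \ref{remfunctoriality}) and of Proposition \ref{pi1corepresent}.

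To establish compatibility with the classical reciprocity map, I would apply $\mathrm{disc}(-)$ followed by $(-)\widehat{\otimes}\widehat{\Z}$ and invoke Proposition \ref{ar-et-hat-et-complete}: the natural map $\mathrm{disc}(R\Gamma_{ar}(\X_K,\Z(d))) \to R\Gamma_{et}(\X_K,\widehat{\Z}(d))$ becomes an equivalence after profinite completion, so on cohomology it identifies the profinite completion of $H^{2d-1}_{ar}(\X_K,\Z(d))$ with $H^{2d-1}_{et}(\X_K,\widehat{\Z}(d))$. Under this completion the integral pairing specializes to the classical \'etale duality with finite coefficients, and the Pontryagin dual of the completed identification of Proposition \ref{pi1corepresent} is precisely the classical reciprocity isomorphism $H^{2d-1}_{et}(\X_K,\widehat{\Z}(d)) \stackrel{\sim}{\to} \pi^{ab}_1(\X_K)$. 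This produces a commutative square whose right-hand vertical map is the dense inclusion $\pi^{ab}_1(\X_K)_W \hookrightarrow \pi^{ab}_1(\X_K)$. Uniqueness then follows from this square: the completion map $H^{2d-1}_{ar}(\X_K,\Z(d)) \to H^{2d-1}_{et}(\X_K,\widehat{\Z}(d))$ has dense image while the inclusion on the right is injective, so any continuous homomorphism inducing the classical isomorphism after completion is determined.

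The main obstacle I anticipate lies in this last step: verifying that the abstract integral duality of \cite[Cor.\ 5.13]{Geisser-Morin-21}, after reduction modulo $m$ and passage to the limit, genuinely agrees with the classical duality underpinning higher-dimensional local class field theory, so that $\underline{\rec}$ truly refines the classical reciprocity map rather than being merely some isomorphism between the same source and target. A secondary technical point is to confirm that $H^{2d-1}_{ar}(\X_K,\Z(d))$, a priori only an object of the left heart $\mathcal{LH}(\mathrm{FLCA})$, is an honest locally compact abelian group; this should drop out of the duality together with the local compactness of $\pi^{ab}_1(\X_K)_W$.
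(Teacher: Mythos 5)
Your construction of $\underline{\rec}$ is exactly the paper's: \cite[Cor.\ 5.13]{Geisser-Morin-21} in degree $2d-1$, Proposition \ref{pi1corepresent}, and Pontryagin double duality (valid since $\pi_1^{ab}(\X_K)_W$ is an extension of a finitely generated group by a profinite group, hence locally compact and reflexive); and your compatibility/uniqueness mechanism --- pass to underlying discrete groups, use the canonical map of Proposition \ref{ar-et-hat-et-complete} to $H^{2d-1}_{et}(\X_K,\widehat{\Z}(d))$, and exploit injectivity of $\pi_1^{ab}(\X_K)_W\hookrightarrow \pi_1^{ab}(\X_K)$ --- is also the paper's, which phrases it via faithfulness of $(-)^{\delta}:\mathrm{LCA}\rightarrow\mathrm{Ab}$ rather than density. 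The ``main obstacle'' you flag at the end is real but already settled in the reference: the commutative square of \cite[Prop.\ 5.8]{Geisser-Morin-21} is precisely the statement that the integral duality, after discretization and completion, agrees with the classical \'etale duality with finite coefficients underlying the reciprocity isomorphism, and the paper invokes it at exactly this point.

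The one step where your justification does not hold up as written is functoriality. You claim it is inherited from functoriality of the duality pairing, citing Remark \ref{remfunctoriality}; but that remark only records covariant functoriality of the complexes $R\Gamma_{ar}(\X,\Z(d_{\X}))$, $R\Gamma_{ar}(\X_K,\Z(d_{\X}))$ and of the comparison maps (\ref{forgettopmap}) and (\ref{forgettopmap2}) --- it asserts nothing about compatibility of the pairing of \cite[Cor.\ 5.13]{Geisser-Morin-21} with proper pushforward, which would be a projection-formula statement (note that for $\X\rightarrow\mathcal{Y}$ one has $d_{\X}\neq d_{\mathcal{Y}}$ in general, so the two dualities pair different twists and degrees), and no such statement is established anywhere in the paper. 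The paper instead deduces functoriality by the same device as uniqueness: for a proper map $\X\rightarrow\mathcal{Y}$ it compares the two composites after applying $(-)^{\delta}$ and mapping to $H^{2d_{\mathcal{Y}}-1}_{et}(\mathcal{Y}_K,\widehat{\Z}(d_{\mathcal{Y}}))\simeq \pi_1^{ab}(\mathcal{Y}_K)$, using functoriality of the classical reciprocity map for proper maps, injectivity of $\pi_1^{ab}(\mathcal{Y}_K)_W\rightarrow\pi_1^{ab}(\mathcal{Y}_K)$, and faithfulness of $(-)^{\delta}$. Your own uniqueness argument already contains this fix: run it on the two composites around the functoriality square instead of appealing to an unproved pushforward-compatibility of the pairing.
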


\begin{proof}
By \cite[Cor.\  5.13]{Geisser-Morin-21}, Proposition \ref{pi1corepresent},
and Pontryagin duality, we have isomorphisms  of locally compact groups
\begin{eqnarray*}
H^{2d-1}_{ar}(\X_{K},\Z(d))&\stackrel{\sim}{ \longrightarrow}& 
\underline{\Hom}(H^{1}_{ar}(\X_{K},\R/\Z),\R/\Z)\\
&\simeq& \underline{\Hom}(\underline{\Hom}(\pi_1^{ab}(\X_K)_W,\R/\Z),\R/\Z)\\
&\simeq & \pi_1^{ab}(\X_K)_{W}.
\end{eqnarray*}
To prove uniqueness, we set $(-)^{\delta}:=\mathrm{disc}(-)$ for brevity. 
Recall from Proposition \ref{ar-et-hat-et-complete} that we have canonical 
morphisms of discrete abelian groups
$$H^{i}_{ar}(\X_{K},\Z(d))^{\delta}\rightarrow H^{i}_{et}(\X_{K},\widehat{\Z}(d)).$$
The commutative square of \cite[Prop.\  5.8]{Geisser-Morin-21}
 (respectively the proof of Proposition \ref{pi1corepresent}) 
shows that the left square (respectively the right square) of the following
the diagram of discrete abelian groups
\[ \xymatrix{
H^{2d-1}_{ar}(\X_{K},\Z(d))^{\delta} \ar[r]^{\hspace{-0.8cm}\sim}\ar[d]^{} 
&\underline{\Hom}(H^{1}_{ar}(\X_{K},\R/\Z),\R/\Z)^{\delta} \ar[d]^{}
&\pi_1^{ab}(\X_K)_{W}^{\delta}\ar[d]\ar[l]_{\hspace{1.5cm}\sim}\\
H^{2d-1}_{et}(\X_{K},\widehat{\Z}(d)) \ar[r]^{\hspace{-0.8cm}\sim}
& \underline{\Hom}(H^{1}_{et}(\X_{K},\Q/\Z),\R/\Z)^{\delta} 
& \pi^{ab}_1(\X_K)^{\delta}\ar[l]_{\hspace{1.5cm}\sim}
}
\]
commutes. Here the right vertical maps is injective, the horizontal maps are all isomorphisms 	and  the composition of the upper horizontal maps is $\underline{\rec}^{\delta}$. The unicity of the map $\underline{\rec}^{\delta}$ follows, as well as the unicity of the map $\underline{\rec}$ since the functor $(-)^{\delta}:\mathrm{LCA}\rightarrow \mathrm{Ab}$ is faithful. 

Finally, we check that $\underline{\rec}$ is functorial. 
A proper map $\X\rightarrow \mathcal{Y}$ induces a commutative square
\[ \xymatrix{
H^{2d_{\X}-1}_{ar}(\X_{K},\Z(d_{\X}))^{\delta} \ar[r]^{}\ar[d]^{}&H^{2d_{\mathcal{Y}}-1}_{ar}(\mathcal{Y}_{K},\Z(d_{\mathcal{Y}}))^{\delta}\ar[d]\\
H^{2d_{\X}-1}_{et}(\X_{K},\widehat{\Z}(d_{\X})) \ar[r]^{} & H^{2d_{\mathcal{Y}}-1}_{et}(\mathcal{Y}_{K},\widehat{\Z}(d_{\mathcal{Y}}))
}
\]
as mentioned in Remark \ref{remfunctoriality}. Considering the reciprocity maps to the corresponding (discrete) fundamental groups, we obtain a 8-terms diagram. A look at this 8-terms diagram shows the result, since $H^{2d-1}_{et}(\X_{K},\widehat{\Z}(d)) \rightarrow \pi_1^{ab}(\X_K)$ is functorial for proper maps, $\pi_1^{ab}(\mathcal{Y}_K)_W\rightarrow \pi_1^{ab}(\mathcal{Y}_K)$ is injective and since $(-)^{\delta}$ is faithful. 
\end{proof}

\begin{example}
If $\X_K=\mathrm{Spec}(K)$, we recover Weil's isomorphism of locally compact abelian groups
$$K^{\times}=H^{1}_{ar}(\X_{K},\Z(1))\stackrel{\sim}{ \longrightarrow} \pi_1^{ab}(\X_K)_{W}=W_K^{ab}.$$
\end{example}

\begin{notation}\label{notgeo}
We define the locally compact groups
\begin{eqnarray*}
H^{2d-1}_{ar}(\X_K,\Z(d))^0&:=&\mathrm{Ker}\left(H^{2d-1}_{ar}(\X_K,\Z(d))\rightarrow K^{\times}\right);\\
H^{2d-1}_{ar}(\X,\Z(d))^0&:=&\mathrm{Ker}\left(H^{2d-1}_{ar}(\X,\Z(d))\rightarrow \O_{K}^{\times}\right);\\
\pi_1^{ab}(\X_K)_W^{geo}&:=&\mathrm{Ker}\left(\pi_1^{ab}(\X_K)_W\rightarrow W_K^{ab}\right).
\end{eqnarray*}
We define similarly the discrete abelian group
\begin{eqnarray*}
H^{2d-1}_{et}(\X,\widehat{\Z}(d))^0&:=&
\mathrm{Ker}\left(H^{2d-1}_{et}(\X,\widehat{\Z}(d))\rightarrow \O_{K}^{\times}\right).
\end{eqnarray*}
\end{notation}

\begin{lem}\label{finitecoker}
The cokernel of the push-forward maps from
$$H^{2d-1}_\M(\X,\Z(d)), H^{2d-1}_{et}(\X,\Z(d)), 
H^{2d-1}_{et}(\X,\Z(d))^{\widehat{}},$$
$$\widehat{H}^{2d-1}_{et}(\X,\Z(d)), H^{2d-1}_{ar}(\X,\Z(d))$$ 
to $\O^{\times}_K$ are finite, where $(-)^{\widehat{}}$ denotes the 
profinite completion.
Similarly, the cokernels of the push-forward from
$$H^{2d-1}_{\M}(\X_K,\Z(d)),
H^{2d-1}_{et}(\X_K,\Z(d)), \widehat{H}^{2d-1}_{et}(\X_K,\Z(d)),
H^{2d-1}_{ar}(\X_K,\Z(d))$$ 
to $K^{\times}$ are finite.
\end{lem}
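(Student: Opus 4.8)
The plan is to reduce every case to the classical motivic one and to invoke local class field theory. All groups in the first list receive natural maps from $H^{2d-1}_{\M}(\X,\Z(d))$: the cycle class map $H^{2d-1}_{\M}(\X,\Z(d))\to H^{2d-1}_{et}(\X,\Z(d))$, the completion map $H^{2d-1}_{et}(\X,\Z(d))\to H^{2d-1}_{et}(\X,\Z(d))^{\widehat{}}$, the map to $\widehat{H}^{2d-1}_{et}(\X,\Z(d))$ induced by $R\Gamma_{et}(\X,\Z(d))\to R\widehat{\Gamma}_{et}(\X,\Z(d))$, and finally the equivalence $\widehat{H}^{2d-1}_{et}(\X,\Z(d))\simeq H^{2d-1}_{ar}(\X,\Z(d))$ of Proposition \ref{propeqarethat}; the same applies to the second list, using the three-row diagram relating $R\Gamma_{et}$, $R\widehat{\Gamma}_{et}$ and $R\Gamma_{ar}$. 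By Remark \ref{remfunctoriality} these maps are compatible with the proper push-forward, and over the base $\Spec\O_K$ (resp. $\Spec K$) they restrict to the identity of $\O_K^\times$ (resp. $K^\times$) by the Example. Hence the image in $\O_K^\times$ (resp. $K^\times$) only grows along these maps, and the cokernel of each of the other push-forwards is a quotient of the cokernel of the motivic one; so it suffices to treat $H^{2d-1}_{\M}$.

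For the motivic statement over $\X_K$, I would choose a closed point $x\in\X_K$ with residue field $L:=\kappa(x)$, a finite extension of $K$, so that $\overline{\{x\}}=\Spec L\hookrightarrow\X_K$. The proper push-forward along $\Spec L\to\X_K\to\Spec K$ factors the transfer $N_{L/K}\colon L^\times=H^1_{\M}(\Spec L,\Z(1))\to K^\times$ through $H^{2d-1}_{\M}(\X_K,\Z(d))$, the first arrow being push-forward in codimension $d-1$ (increasing cohomological degree by $2(d-1)$ and landing in $H^{2d-1}_{\M}(\X_K,\Z(d))$). Since $[K^\times:N_{L/K}L^\times]<\infty$ by local class field theory, the image of $H^{2d-1}_{\M}(\X_K,\Z(d))\to K^\times$ contains a finite index subgroup, and the cokernel is finite.

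For $\X$ the argument runs through a horizontal curve. The Zariski closure of $x$ in $\X$ is a one-dimensional integral scheme, flat and proper over $\O_K$, whose normalization is $\Spec\O_L$, finite over $\O_K$; push-forward along the resulting finite map $\Spec\O_L\to\X\to\Spec\O_K$ then factors $N_{L/K}\colon\O_L^\times=H^1_{\M}(\Spec\O_L,\Z(1))\to\O_K^\times$ through $H^{2d-1}_{\M}(\X,\Z(d))$. To deduce finiteness of $\O_K^\times/N_{L/K}\O_L^\times$ from that of $K^\times/N_{L/K}L^\times$, I would use that $v_K\circ N_{L/K}=f_{L/K}\,v_L$ with $f_{L/K}\geq 1$, so any unit in the image of $N_{L/K}$ is a norm of a unit; thus $\O_K^\times\cap N_{L/K}L^\times=N_{L/K}\O_L^\times$ and $\O_K^\times/N_{L/K}\O_L^\times$ embeds into the finite group $K^\times/N_{L/K}L^\times$. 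The essential input is therefore just the local class field theory finiteness of norm cokernels together with the existence of a closed point realizing the norm; the one point to be checked with some care — and the only genuine obstacle — is that the cycle class, completion and comparison maps commute with proper push-forward and restrict to the identity on the unit groups over the base, since it is this compatibility that lets the single motivic computation control all five (resp.\ four) theories simultaneously.
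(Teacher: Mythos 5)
Your proof is correct and follows essentially the same route as the paper: reduce all the groups to the motivic one via the comparison maps (which over the base $\Spec \O_K$, resp.\ $\Spec K$, are isomorphisms onto $\O_K^\times$, resp.\ $K^\times$), then push forward from $\Spec \O_L$ (resp.\ $\Spec L$) attached to a closed point of $\X_K$ with residue field $L$. The only divergence is the final finiteness step: where you invoke the norm-index finiteness of local class field theory together with the identity $\O_K^\times\cap N_{L/K}L^\times=N_{L/K}\O_L^\times$, the paper instead restricts the composite to $\O_K^\times\subseteq\O_L^\times$ (resp.\ $K^\times\subseteq L^\times$), where it is simply multiplication by $[L:K]$, whose cokernel is finite by the known structure of $\O_K^\times$ and $K^\times$ --- a slightly more elementary argument that avoids class field theory entirely.
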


\begin{proof}
We first note that for $\X=\Spec(\O_K)$ the first groups are isomorphic to 
$\O_K^\times$, and for $X=\Spec(K)$ the second groups are isomorphic to $K^\times$.
In view of Proposition \ref{ar-et-hat-et-complete} 
the push-forward maps $H^{2d-1}_\M(\X,\Z(d)) \to \O^{\times}_K$ and 
$H^{2d-1}_{\M}(\X_K,\Z(d))\to K^\times$ factor through the other groups, so that
it suffices to prove the Lemma for these maps.

Let $L/K$ be a finite extension such that $\X_K$ has a $L$-rational point.
By properness, we obtain a finite flat morphism $\mathrm{Spec}(\O_L)\rightarrow \X$ 
over $\mathrm{Spec}(\O_K)$.  Then the composite maps
$$\O_K^\times\subseteq 
\O_L^{\times}\simeq H^{1}_{\M}(\O_L,\Z(1))\rightarrow 
H^{2d-1}_{\M}(\X,\Z(d)) \rightarrow 
H^{1}_{\M}(\O_K,\Z(1))\simeq \O_K^{\times}$$
$$K^\times\subseteq L^{\times}\simeq H^{1}_{\M}(L,\Z(1))\rightarrow 
H^{2d-1}_{\M}(\X_K,\Z(d)) \rightarrow 
H^{1}_{\M}(K,\Z(1))\simeq K^{\times}$$
are multiplication by $[L:K$], hence have finite cokernels by the known structure
of $\O_K^{\times}$ and $K^{\times}$.
\end{proof}

\begin{prop}\label{propkey}
Assume that $\X$ has good or strictly semi-stable reduction. Then the group 
$H^{2d-1}_{ar}(\X,\Z(d))^0\simeq H^{2d-1}_{et}(\X,\widehat{\Z}(d))^0$ is finite. 
\end{prop}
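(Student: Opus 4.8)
The plan is to first establish the isomorphism $H^{2d-1}_{ar}(\X,\Z(d))^0\simeq H^{2d-1}_{et}(\X,\widehat{\Z}(d))^0$ that is asserted, and then prove finiteness of the common group. For the isomorphism, I would invoke Proposition \ref{ar-et-hat-et-complete}, which gives an equivalence $\mathrm{disc}(R\Gamma_{ar}(\X,\Z(d)))\simeq R\Gamma_{et}(\X,\widehat{\Z}(d))$; taking $H^{2d-1}$ and comparing the kernels of the two push-forward maps to $\O_K^\times$ (which are compatible by the functoriality of Remark \ref{remfunctoriality}) yields the identification of the degree-zero parts. So the substance is finiteness.

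For finiteness I would proceed as follows. First I would use Proposition \ref{prop-finiteranks}(2): since $R\Gamma_W(\X_s,\Z^c(0))$ is perfect, the complex $R\Gamma_{ar}(\X,\Z(d))$ lies in $\mathbf{D}^b(\mathrm{FLCA})$, so each $H^{i}_{ar}(\X,\Z(d))$ is a locally compact group of finite rank. In particular $H^{2d-1}_{ar}(\X,\Z(d))^0$, being a closed subgroup, is itself of finite rank. Next I would exploit the push-forward map to $\O_K^\times$ together with Lemma \ref{finitecoker}, which tells me this map has \emph{finite cokernel}. Combined with the definition of the superscript $0$ as the kernel, I get a short exact sequence
$$0\to H^{2d-1}_{ar}(\X,\Z(d))^0\to H^{2d-1}_{ar}(\X,\Z(d))\to \O_K^\times$$
with finite cokernel onto $\O_K^\times$, so it suffices to control the full group $H^{2d-1}_{ar}(\X,\Z(d))$ modulo a copy of $\O_K^\times$. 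Equivalently, working with the equivalent discrete group $H^{2d-1}_{et}(\X,\widehat{\Z}(d))$, I would show its degree-zero part is both torsion and of finite corank, forcing finiteness.

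The key computation I expect to do is to identify $H^{2d-1}_{et}(\X,\widehat{\Z}(d))$ via the $\widehat{\Z}$-coefficient duality and the known structure of étale motivic cohomology on the regular proper scheme $\X$ over $\O_K$, using the localization/descent triangles of Section 2 (the fiber sequence \eqref{locfibsequenceKun+} and the diagrams following it). The rank-$r$ free part of $H^{2d-1}_{ar}(\X,\Z(d))$ should map isomorphically, up to finite error, onto the free part $\O_K^\times/(\text{torsion})$ of the target under the push-forward, so that after passing to the kernel of the map to $\O_K^\times$ the free rank drops to zero, leaving a group that is simultaneously of finite rank and torsion, hence finite.

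The main obstacle will be the rank bookkeeping: proving that the trace/push-forward map $H^{2d-1}_{ar}(\X,\Z(d))\to \O_K^\times$ is, after tensoring with $\R$, an isomorphism onto the $\R$-line coming from $\O_K^\times$, so that the kernel has trivial $\R$-rank. This is where the perfectness hypothesis on $R\Gamma_W(\X_s,\Z^c(0))$ and the good/semi-stable reduction assumption must be used to pin down the Euler characteristic and the ranks in degree $2d-1$; the finiteness of the cokernel from Lemma \ref{finitecoker} alone does not immediately give that the map is a rational isomorphism onto the relevant line, and that rational surjectivity/injectivity is the crux. Once the rank of the kernel is shown to be zero, finiteness is automatic since a finite-rank locally compact group with no free part and only torsion cohomology is finite.
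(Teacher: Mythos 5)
There is a genuine gap, and you have in fact pointed at it yourself: everything in your outline reduces the problem to showing that the push-forward $H^{2d-1}_{ar}(\X,\Z(d))\to\O_K^{\times}$ is rationally an isomorphism onto the line coming from $\O_K^{\times}$ (equivalently, that the kernel is torsion), and you then declare this ``rank bookkeeping'' to be the main obstacle without supplying an argument for it. This is precisely the mathematical content of the proposition, and no amount of Euler-characteristic or finite-rank formalism from Proposition \ref{prop-finiteranks} produces it. The paper proves it by a specific $p$-adic Hodge-theoretic input: by the proof of Theorem 4.16 in \cite{Geisser-Morin-21} (this is where good or strictly semi-stable reduction enters), one has $H^{2d-1}_{et}(\X,\Q_p(d))\simeq H^1_f(G_K,H^{2d-2}(\X_{\bar K},\Q_p(d)))$ and $H^1_{et}(\O_K,\Q_p(1))\simeq H^1_f(G_K,\Q_p(1))$, and the push-forward is identified with the map on $H^1_f$ induced by the trace isomorphism $H^{2d-2}(\X_{\bar K},\Q_p(d))\simeq\Q_p(1)$ (using that $\X_K$ is geometrically connected); hence $H^{2d-1}_{et}(\X,\Z_p(d))^0$ is torsion, and being a torsion submodule of a finitely generated $\Z_p$-module it is finite. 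For $l\neq p$ one uses that $H^{2d-1}_{et}(\X,\Z_l(d))\simeq H^{2d-1}_{et}(\X_s,\Z_l(d))$ is finite and vanishes for almost all $l$ \cite[Lemma 4.17]{Geisser-Morin-21}. Nothing in your proposal substitutes for these two inputs.

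Two further problems. First, your closing inference --- ``a finite-rank locally compact group with no free part and only torsion cohomology is finite'' --- is false as stated: $\Q/\Z$ is a discrete torsion group of finite ranks with no free part. Finite rank plus torsion does not force finiteness unless you also kill the divisible part; the paper avoids this trap because the $p$-part is a \emph{finitely generated} $\Z_p$-module, for which torsion does imply finite, and the $l$-parts are handled separately. Second, you invoke Proposition \ref{prop-finiteranks}(2), whose hypothesis is that $R\Gamma_W(\X_s,\Z^c(0))$ is perfect; Proposition \ref{propkey} does not assume this, and the paper's proof does not need it (it works entirely on the discrete side $H^{2d-1}_{et}(\X,\widehat\Z(d))^0$, and only at the end deduces that the locally compact group $H^{2d-1}_{ar}(\X,\Z(d))^0$, having finite underlying discrete group by Proposition \ref{ar-et-hat-et-complete}, is finite and hence discrete, which is also what legitimizes the topological identification you treated as a formality at the outset).
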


\begin{proof}
By \cite[Proof of Theorem 4.16]{Geisser-Morin-21}, we have isomorphisms 
$H^{2d-1}_{et}(\X,\Q_p(d))\simeq H^1_f(G_K,H^{2d-2}(\X_{\bar{K}}, \Q_p(d)))$ and 
$H^{1}_{et}(\O_K,\Q_p(1))\simeq H^1_f(G_K,\Q_p(1))$. 
The push-forward map
$H^{2d-1}_{et}(\X,\Q_p(d))\rightarrow H^{1}_{et}(\O_K,\Q_p(1))$ can be identified 
with the map 
$H^1_f(G_K,H^{2d-2}(\X_{\bar{K}}, \Q_p(d)))\rightarrow H^1_f(G_K,\Q_p(1))$ induced 
by the trace map $H^{2d-2}(\X_{\bar{K}}, \Q_p(d))\stackrel{\sim}{\rightarrow}\Q_p(1)$, 
which is an isomorphism since $\X_K$ is geometrically connected. 
Hence the kernel $$H^{2d-1}_{et}(\X,\Z_p(d))^0:=\mathrm{Ker}\left(H^{2d-1}_{et}(\X,\Z_p(d))\rightarrow H^{1}_{et}(\O_{K},\Z_p(1)) \right)$$ 
is torsion. Since $H^{2d-1}_{et}(\X,\Z_p(d))$ is a finitely generated $\Z_p$-module, $H^{2d-1}_{et}(\X,\Z_p(d))^0$ is finite. Moreover,  the group $H^{2d-1}_{et}(\X,\Z_l(d))\simeq H^{2d-1}_{et}(\X_s,\Z_l(d))$ is finite for all $l\neq p$ and vanishes for almost all $l$, see \cite[Lemma 4.17]{Geisser-Morin-21}. Hence 
$H^{2d-1}_{et}(\X,\widehat{\Z}(d))^0$ is finite as well. Since we have 
$\mathrm{disc}(H^{2d-1}_{ar}(\X,\Z(d))^0)=H^{2d-1}_{et}(\X,\widehat{\Z}(d))^0$ by Proposition \ref{ar-et-hat-et-complete}, the locally compact group $H^{2d-1}_{ar}(\X,\Z(d))^0$ is finite hence discrete, and may therefore be identified with $H^{2d-1}_{et}(\X,\widehat{\Z}(d))^0$.
\end{proof}

\begin{prop}\label{exact-sequences}Assume that $R\Gamma_W(\X_s,\Z^c(0))$ is cohomologically bounded. Then we have a diagram with exact rows and columns
\[ \xymatrix{
H_{2}^W(\X_s,\Z)\ar[r]\ar[d]^= & H^{2d-1}_{ar}(\X,\Z(d))^0  \ar[d]\ar[r] 
&   H^{2d-1}_{ar}(\X_K,\Z(d))^0      \ar[r]^{\hspace{0.5cm}f} \ar[d]
& H_{1}^W(\X_s,\Z)^0   \ar[d]  & \\
H_{2}^W(\X_s,\Z)\ar[d]\ar[r] & H^{2d-1}_{ar}(\X,\Z(d))  \ar[d]^s\ar[r] 
&   H^{2d-1}_{ar}(\X_K,\Z(d))             \ar[r] \ar[d]
& H_{1}^W(\X_s,\Z)   \ar[d] \ar[r] & 0\\
0\ar[r]& \O^{\times}_K  \ar[r]^{} &   K^{\times} \ar[r]& \Z\ar[r] & 0 
}
\]
Moreover, we have an injective map $\cok(f)\hookrightarrow \cok(s)$ of 
finite groups. 
\end{prop}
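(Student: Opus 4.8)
The plan is to obtain the three rows from three long exact sequences and to link them by the push-forward along the structure map $\pi\colon\X\to\Spec(\O_K)$.

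First I would take the middle row to be the long exact cohomology sequence of the defining cofiber sequence
$$R\Gamma_W(\X_s,Ri^!\Z(d))\to R\Gamma_{ar}(\X,\Z(d))\to R\Gamma_{ar}(\X_K,\Z(d)),$$
rewriting its special-fibre terms by means of the identification $Ri^!\Z(d)=\Z^c(0)^{\X_s}[-2d]$ of the Remark. A degree count from the definition of $H_i^W$ as the homology of $R\Gamma_W(\X_s,\Z^c(0))[1]$ gives $H^{2d-1}(R\Gamma_W(\X_s,Ri^!\Z(d)))=H_2^W(\X_s,\Z)$ and $H^{2d}(R\Gamma_W(\X_s,Ri^!\Z(d)))=H_1^W(\X_s,\Z)$, which places the correct groups at the ends of the row. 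The bottom row is the same construction for $\Spec(\O_K)$: since $R\Gamma_{ar}(\Spec\O_K,\Z(1))\simeq\O_K^\times[-1]$ (the Example) and $H_1^W(s,\Z)=\Z$, $H_2^W(s,\Z)=0$, the resulting sequence is the valuation sequence $0\to\O_K^\times\to K^\times\to\Z\to 0$. Applying covariant functoriality for the proper map $\pi$ (Remark \ref{remfunctoriality}) to the two cofiber sequences yields the vertical maps and the commutativity of the whole ladder; in particular the first and fourth columns are the push-forwards $H_2^W(\X_s,\Z)\to 0$ and $H_1^W(\X_s,\Z)\to\Z$, and I define $H_1^W(\X_s,\Z)^0$ to be the kernel of the latter. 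The top row is then the row of kernels of the vertical maps, which are exactly the superscript-$0$ groups of Notation \ref{notgeo}; its maps are the restrictions of those of the middle row and are well defined by commutativity.

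The step I expect to be the real obstacle is the surjectivity of $H^{2d-1}_{ar}(\X_K,\Z(d))\to H_1^W(\X_s,\Z)$ that makes the middle row terminate in $0$, equivalently the vanishing of the connecting map $H_1^W(\X_s,\Z)\to H^{2d}_{ar}(\X,\Z(d))$. I would deduce it from Pontryagin duality rather than directly. Using \cite[Cor.\ 5.13]{Geisser-Morin-21} together with the identification $H_1^W(\X_s,\Z)^D\simeq H^1_{ar}(\X_s,\R/\Z)$ (from \cite[Prop.\ 2.4]{Geisser-Schmidt-17} and Proposition \ref{sameH1}), the boundary map $c\colon H^{2d-1}_{ar}(\X_K,\Z(d))\to H_1^W(\X_s,\Z)$ is identified with the Pontryagin dual of the restriction $r\colon H^1_{ar}(\X_s,\R/\Z)\to H^1_{ar}(\X_K,\R/\Z)$; thus $c$ is surjective if and only if $r$ is injective. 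To see the latter I would use the push-out (Mayer--Vietoris) square in the proof of Proposition \ref{pi1corepresent}: any element of $\ker r$ comes, via that square, from $H^1_{et}(\X,\R/\Z)$, whose image under the complementary restriction $H^1_{et}(\X,\R/\Z)\to H^1_{et}(\X_K,\R/\Z)$ would then vanish; as this restriction is injective (being Pontryagin dual to the surjection $\pi^{ab}_1(\X_K)\twoheadrightarrow\pi^{ab}_1(\X)$, valid since $\X$ is normal, \cite[V, Prop.\ 8.2]{SGA1}), the element is $0$, so $c$ is surjective.

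With the ladder and this surjectivity in hand, the rest is diagram chasing. The columns are exact by construction, each superscript-$0$ group being the kernel of its vertical map, and $\cok(s)$ is finite by Lemma \ref{finitecoker}. Exactness of the top row at its two interior terms is read off directly from the commutativity of the squares and the injectivity of $\O_K^\times\hookrightarrow K^\times$, and needs neither the surjectivity above nor a snake-lemma argument. For the final injection I would construct the comparison map explicitly: writing $b,\gamma,\iota,v,\delta$ for the relevant maps of the ladder, lift $\xi\in H_1^W(\X_s,\Z)^0=\ker(\delta)$ to some $y\in H^{2d-1}_{ar}(\X_K,\Z(d))$ (possible by surjectivity), observe that $\gamma(y)$ lies in $\O_K^\times=\ker(v)$ since $v(\gamma(y))=\delta(\xi)=0$, and send $\xi$ to the class of $\gamma(y)$ in $\cok(s)$. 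The relation $\gamma\circ b=\iota\circ s$ shows this is independent of the lift and kills $\im(f)$, so it descends to $\cok(f)\to\cok(s)$; and it is injective because if $\gamma(y)=s(x)$ then $y-b(x)\in\ker(\gamma)=H^{2d-1}_{ar}(\X_K,\Z(d))^0$ still maps to $\xi$ under $c$, whence $\xi\in\im(f)$. Finiteness of $\cok(f)$ is then inherited from that of $\cok(s)$.
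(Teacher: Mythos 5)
Your scaffolding agrees with the paper's proof: the middle row is the long exact sequence of the localization cofiber sequence, with the degree count $H^{2d-1}(R\Gamma_W(\X_s,Ri^!\Z(d)))=H_2^W(\X_s,\Z)$ and $H^{2d}=H_1^W(\X_s,\Z)$ exactly as you computed; the bottom row is the case $\X=\Spec(\O_K)$; the top row is the row of kernels; your hand-built connecting map is just the snake lemma; and finiteness of $\cok(s)$ is Lemma \ref{finitecoker}. The divergence --- and the gap --- is at the one genuinely nontrivial point you correctly isolated, namely why the middle row ends in $0$. The paper does not dualize the boundary map; it proves the vanishing of the next term outright: by the equivalence (\ref{forgettopmap}) one has $H^{2d}_{ar}(\X,\Z(d))^{\delta}\simeq H^{2d}_{et}(\X,\widehat{\Z}(d))$, which by (unconditional) \'etale duality over $\X$ equals $H^{1}_{et,\X_s}(\X,\Q/\Z)^D\simeq\bigl(\mathrm{Ker}(\pi^{ab}_1(\X)^D\rightarrow \pi^{ab}_1(\X_K)^D)\bigr)^D=0$, using the same normality input \cite[V, Prop.\ 8.2]{SGA1} that you use. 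This argument needs nothing beyond the stated hypothesis that $R\Gamma_W(\X_s,\Z^c(0))$ is cohomologically bounded.

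Your route for the surjectivity has two concrete problems. First, the identification $H_1^W(\X_s,\Z)^D\simeq H^1_{ar}(\X_s,\R/\Z)$ does not follow from \cite[Prop.\ 2.4]{Geisser-Schmidt-17} together with Proposition \ref{sameH1}: those results identify $H^1_{ar}(\X_s,\R/\Z)$ with $\underline{\Hom}(\Pi^{ab}_1(\X_s)_W,\R/\Z)$, i.e.\ with the dual of the Weil fundamental group, not of $H_1^W(\X_s,\Z)$; equating the two is precisely the Geisser--Schmidt reciprocity isomorphism over finite fields, a substantial theorem nowhere invoked in this paper, and you would in addition have to check that under this identification (and the duality of \cite[Cor.\ 5.13]{Geisser-Morin-21}) the boundary map of the arithmetic localization triangle becomes the Pontryagin dual of the restriction $r$ --- a compatibility you assert but do not prove, and which is exactly where the content sits. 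Second, the inputs you call on (\cite[Cor.\ 5.13]{Geisser-Morin-21}, Propositions \ref{pi1corepresent} and \ref{sameH1}) are available in this paper only under good or strictly semistable reduction, perfectness of $R\Gamma_W(\X_s,\Z^c(0))$, or resolution of singularities, whereas Proposition \ref{exact-sequences} assumes only cohomological boundedness; so even granting the missing compatibilities, your argument would prove the proposition in strictly smaller generality than stated. The rest of your write-up (exactness of the top row without the surjectivity, the explicit descent of the map $\cok(f)\hookrightarrow\cok(s)$) is correct and matches the paper.
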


\begin{proof} We have by duality
$$H^{2d}_{ar}(\X,\Z(d))^{\delta}\simeq H^{2d}_{et}(\X,\widehat{\Z}(d))\simeq H^{1}_{et,\X_s}(\X,\Q/\Z)^D$$
$$\simeq \left(\mathrm{Ker}(\pi^{ab}_1(\X)^D\rightarrow \pi^{ab}_1(\X_K)^D)\right)^D=0$$
since $\pi^{ab}_1(\X_K)\rightarrow \pi^{ab}_1(\X)$ is surjective because $\X$ is normal 
\cite[V, Prop.\ 8.2]{SGA1}. 
Hence the middle row is the localization sequence for arithmetic cohomology. 
The fact that the lower squares commute follows from the functoriality of 
localization sequences. The top row is defined by taking kernels; 
it is easily seen to be exact. 
The injective map $\cok(f)\hookrightarrow \cok(s)$ is given by the snake lemma. 
The fact that $\cok(s)$ is finite is Lemma in \ref{finitecoker}.
\end{proof}

\begin{cor}\label{ICFT0}
Assume that $\X$ has good or strictly semi-stable reduction and that 
$R\Gamma_W(\X_s,\Z^c(0))$ is a perfect complex of abelian groups.
Then the map $\underline{\rec}$ induces an isomorphism
\begin{equation}
\underline{\rec}^0:H^{2d-1}_{ar}(\X_{K},\Z(d))^0
\stackrel{\sim}{ \longrightarrow} \pi_1^{ab}(\X_K)^{geo}_W
\end{equation}
of finitely generated abelian groups of ranks 
$$\mathrm{rank}_{\Z}(H_{ar}^{2d-1}(\X_K,\Z(d))^0)=
\mathrm{rank}_{\Z}(H_1^W(\X_s,\Z))-1.$$
\end{cor}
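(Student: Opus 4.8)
The plan is to obtain $\underline{\rec}^0$ by restricting the reciprocity isomorphism of Theorem \ref{ICFT} to the geometric subgroups, and then to read off finite generation and the rank from the localization diagram of Proposition \ref{exact-sequences}.

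First I would invoke the functoriality of $\underline{\rec}$ established in Theorem \ref{ICFT}, applied to the (proper) structure morphism $\X\rightarrow\Spec(\O_K)$. Since $\dim\Spec(\O_K)=1$ and the generic fiber of $\Spec(\O_K)$ is $\Spec(K)$, this produces a commutative square
\[
\begin{CD}
H^{2d-1}_{ar}(\X_K,\Z(d)) @>{\underline{\rec}}>> \pi_1^{ab}(\X_K)_W\\
@VVV @VVV\\
H^{1}_{ar}(\Spec(K),\Z(1)) @>{\underline{\rec}}>> \pi_1^{ab}(\Spec(K))_W
\end{CD}
\]
whose left vertical map is the push-forward to $K^{\times}=H^1_{ar}(\Spec(K),\Z(1))$, whose right vertical map is the projection to $\pi_1^{ab}(\Spec(K))_W=W_K^{ab}$, and whose bottom map is Weil's local reciprocity isomorphism. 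As both horizontal maps are isomorphisms, $\underline{\rec}$ carries the kernel $H^{2d-1}_{ar}(\X_K,\Z(d))^0$ isomorphically onto the kernel $\pi_1^{ab}(\X_K)^{geo}_W$; this simultaneously defines $\underline{\rec}^0$ and shows it is an isomorphism of locally compact groups.

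Next I would establish finite generation using the top row of Proposition \ref{exact-sequences},
$$H^{2d-1}_{ar}(\X,\Z(d))^0\rightarrow H^{2d-1}_{ar}(\X_K,\Z(d))^0\stackrel{f}{\rightarrow}H_1^W(\X_s,\Z)^0.$$
The first group is finite by Proposition \ref{propkey}. Since $R\Gamma_W(\X_s,\Z^c(0))$ is perfect, $H_1^W(\X_s,\Z)$ is finitely generated, hence so is its subgroup $H_1^W(\X_s,\Z)^0$. Therefore $\im(f)$ is finitely generated and $\ker(f)$, being a quotient of a finite group, is finite; thus $H^{2d-1}_{ar}(\X_K,\Z(d))^0$ is finitely generated, and so is $\pi_1^{ab}(\X_K)^{geo}_W$ via $\underline{\rec}^0$.

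For the rank I would combine three identities. First, since the image of $H^{2d-1}_{ar}(\X,\Z(d))^0$ is finite, $\mathrm{rank}_{\Z}H^{2d-1}_{ar}(\X_K,\Z(d))^0=\mathrm{rank}_{\Z}\im(f)$. Second, $\cok(f)$ is finite by Proposition \ref{exact-sequences}, so $\im(f)$ has finite index in $H_1^W(\X_s,\Z)^0$ and $\mathrm{rank}_{\Z}\im(f)=\mathrm{rank}_{\Z}H_1^W(\X_s,\Z)^0$. Third, the composite $H^{2d-1}_{ar}(\X_K,\Z(d))\rightarrow K^{\times}\rightarrow\Z$ has finite cokernel by Lemma \ref{finitecoker}, hence nonzero image; by the commuting right-hand column of Proposition \ref{exact-sequences} it factors through $H_1^W(\X_s,\Z)\rightarrow\Z$, so that map is nonzero and $\mathrm{rank}_{\Z}H_1^W(\X_s,\Z)^0=\mathrm{rank}_{\Z}H_1^W(\X_s,\Z)-1$. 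Chaining these yields the asserted formula. The step I expect to require the most care is the first one: verifying that $\underline{\rec}$ is compatible with the projection onto the base, i.e.\ that the square above genuinely commutes with the push-forward identified with Weil reciprocity. Granting the functoriality of Theorem \ref{ICFT}, everything else is diagram chasing and rank bookkeeping.
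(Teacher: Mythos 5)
Your proposal is correct and takes essentially the same route as the paper: you obtain $\underline{\rec}^0$ by restricting $\underline{\rec}$ via its functoriality applied to the proper map $\X\rightarrow\Spec(\O_K)$ (where it becomes Weil's isomorphism $K^{\times}\simeq W_K^{ab}$), and you deduce finite generation and the rank formula from Proposition \ref{propkey} and the diagram of Proposition \ref{exact-sequences}, exactly as the paper does. The only cosmetic deviation is in the last rank step: the paper sees that $H_1^W(\X_s,\Z)\rightarrow H_1^W(s,\Z)\simeq\Z$ has finite cokernel by pushing forward from a closed point of $\X_s$, whereas you derive the same finite-cokernel statement from Lemma \ref{finitecoker} together with the commuting right-hand column of the diagram --- both arguments are valid and yield $\mathrm{rank}_{\Z}(H_1^W(\X_s,\Z)^0)=\mathrm{rank}_{\Z}(H_1^W(\X_s,\Z))-1$.
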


\begin{proof}
The fact that $\underline{\rec}^0$ is an isomorphism follows from the fact that
isomorphism $\underline{\rec}$ is functorial. Finite generation and the 
statement about the rank of $H^{2d-1}_{ar}(\X_K,\Z(d))^0$ follows 
from Proposition \ref{propkey} and Proposition \ref{exact-sequences}
because the map $H_{1}^W(\X_s,\Z)\rightarrow H_{1}^W(s,\Z)$ has finite cokernel, 
as can be seen by considering a closed point of $\X_s$.
\end{proof}

\section{Comparison to the classical reciprocity map}
In this section we study the composition 
$H_{et}^{2d-1}(\X_K,\Z(d))\rightarrow
\widehat{H}_{et}^{2d-1}(\X_K,\Z(d))\rightarrow{H}_{ar}^{2d-1}(\X_K,\Z(d))$,
generalizing the reciprocity map of Saito \cite{Saito85} for curves. 

\subsection{The map $H_{et}^{2d-1}(\X_K,\Z(d))\rightarrow
\widehat{H}_{et}^{2d-1}(\X_K,\Z(d)) $.}

The following lemma follows by an easy diagram chase.

\begin{lem}\label{lem-cokertokersequence}
Consider a diagram with exact rows of abelian groups 
\[ \xymatrix{
A^{i}\ar[r] \ar[d]^{\simeq}&B^{i} \ar[r] \ar[d]^{f^{i}} &C^{i} \ar[r]^{}\ar[d]^{g^{i}}&A^{i+1}\ar[r] \ar[d]^{\simeq}&B^{i+1} \ar[r] \ar[d]^{f^{i+1}} &C^{i+1} \ar[r]^{}\ar[d]^{g^{i+1}}&A^{i+2}\ar[d]^{\simeq}\\
A'^{i}\ar[r] &B'^{i} \ar[r]  &C'^{i} \ar[r]^{}&A'^{i+1}\ar[r] &B'^{i+1} \ar[r] &C'^{i+1}\ar[r] &A'^{i+2},\\
}
\]
where the maps $A^j\rightarrow A'^j$ are isomorphisms. Then there is an induced exact sequence
$$0\rightarrow \cok(f^{i})\rightarrow \cok(g^{i})\rightarrow 
\ker(f^{i+1})\rightarrow\ker (g^{i+1})\rightarrow 0.$$
\end{lem}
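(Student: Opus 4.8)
The plan is to write down the three arrows of the claimed sequence by hand and then check exactness at the four spots by a direct chase. Label the top row
$$A^{i}\xrightarrow{\alpha^{i}}B^{i}\xrightarrow{\beta^{i}}C^{i}\xrightarrow{\gamma^{i}}A^{i+1}\xrightarrow{\alpha^{i+1}}B^{i+1}\xrightarrow{\beta^{i+1}}C^{i+1}\xrightarrow{\gamma^{i+1}}A^{i+2},$$
with the analogous primed maps ${\alpha'}^{j},{\beta'}^{j},{\gamma'}^{j}$ on the bottom row, and let $a^{j}\colon A^{j}\stackrel{\sim}{\to}{A'}^{j}$ be the given vertical isomorphisms. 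Commutativity of the squares records the identities $f^{i}\alpha^{i}={\alpha'}^{i}a^{i}$, $g^{i}\beta^{i}={\beta'}^{i}f^{i}$, $a^{i+1}\gamma^{i}={\gamma'}^{i}g^{i}$, and their shifts, which together with exactness of the two rows are the only inputs.

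First I would construct the maps. The arrow $\cok(f^{i})\to\cok(g^{i})$ is the one induced by ${\beta'}^{i}$, which carries $\im(f^{i})$ into $\im(g^{i})$ by ${\beta'}^{i}f^{i}=g^{i}\beta^{i}$; and $\ker(f^{i+1})\to\ker(g^{i+1})$ is the restriction of $\beta^{i+1}$, which lands in $\ker(g^{i+1})$ because $g^{i+1}\beta^{i+1}={\beta'}^{i+1}f^{i+1}$. The interesting arrow is the connecting map $\cok(g^{i})\to\ker(f^{i+1})$, and this is where the hypothesis on the $a^{j}$ enters: for a class represented by $c'\in{C'}^{i}$ I set its image to be $\alpha^{i+1}\big((a^{i+1})^{-1}{\gamma'}^{i}(c')\big)$. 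It lies in $\ker(f^{i+1})$ since $f^{i+1}\alpha^{i+1}={\alpha'}^{i+1}a^{i+1}$ and ${\alpha'}^{i+1}{\gamma'}^{i}=0$, and it annihilates $\im(g^{i})$ because $(a^{i+1})^{-1}{\gamma'}^{i}g^{i}=\gamma^{i}$ forces the value on $c'=g^{i}(c)$ to be $\alpha^{i+1}\gamma^{i}(c)=0$; hence the map is well defined on $\cok(g^{i})$.

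Then I would verify exactness at each of the four spots; every verification is a short chase combining exactness of one row, one commuting square, and bijectivity of one $a^{j}$. Injectivity of $\cok(f^{i})\to\cok(g^{i})$ uses injectivity of $a^{i+1}$ to push a representative into $\im(\beta^{i})$ and then surjectivity of $a^{i}$ to absorb it into $\im(f^{i})$. Exactness at $\cok(g^{i})$ and at $\ker(f^{i+1})$ comes from exactness of the two rows at the neighbouring $A$-, $B$- and $C$-terms, together with the invertibility of $a^{i+1}$ already built into the connecting map. Surjectivity of $\ker(f^{i+1})\to\ker(g^{i+1})$ uses injectivity of $a^{i+2}$ to lift a class of $\ker(g^{i+1})$ along $\beta^{i+1}$, and surjectivity of $a^{i+1}$ to correct that lift into $\ker(f^{i+1})$. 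I do not expect a genuine obstacle: the only point requiring care is the connecting map---checking that it is independent of the chosen representative and really lands in $\ker(f^{i+1})$---after which the four exactness checks are mechanical, the one nuisance being to keep track of which of the $a^{j}$ must be injective and which surjective at each step.
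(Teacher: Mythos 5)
Your proposal is correct, and it is exactly the argument the paper intends: the paper states only that the lemma ``follows by an easy diagram chase,'' and your construction of the connecting map $[c']\mapsto\alpha^{i+1}\bigl((a^{i+1})^{-1}{\gamma'}^{i}(c')\bigr)$ together with the four exactness checks (each using precisely the injectivity/surjectivity of the $a^{j}$ you indicate) supplies that chase in full. Nothing is missing; your sketched verifications all go through as described.
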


\begin{lem}\label{lemreduct}
Assume that the group $H^{2d-1}_{et}(\X,\widehat{\Z}(d))^0$ is finite. 
Then one has $TH^{2d}_{et}(\X,\Z(d))=0$ and the map 
\begin{equation}\label{forcoker}
H_{et}^{2d-1}(\X,\Z(d))\rightarrow H_{et}^{2d-1}(\X,\Z(d))^{\widehat{}}
\end{equation}
is surjective, where $(-)^{\widehat{}}$ denotes the (naive) profinite completion.
In particular, 
$H_{et}^{2d-1}(\X,\Z(d))\rightarrow H_{et}^{2d-1}(\X,\widehat\Z(d))$
is surjective.
\end{lem}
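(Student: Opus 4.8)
Write $A:=H^{2d-1}_{et}(\X,\Z(d))$ and $B:=H^{2d}_{et}(\X,\Z(d))$, and let $T$ denote the torsion subgroup functor (so $TH^{2d}_{et}(\X,\Z(d))=TB$). The plan is to reduce both assertions to the single statement that the completion map $c\colon A\to H^{2d-1}_{et}(\X,\widehat{\Z}(d))$ is surjective, and then to establish that statement by comparing the push-forwards to $\O_K^{\times}$ in integral and in profinitely completed coefficients. I would first record the formal input: by proper base change (as in the proof of Proposition \ref{pi1corepresent}) the groups $H^i_{et}(\X,\Z/m(d))\cong H^i_{et}(\X_s,\Z/m(d))$ are finite for all $i,m$, so every system $\{H^i_{et}(\X,\Z/m(d))\}_m$ is Mittag--Leffler, all $\varprojlim^1$ vanish, and $H^i_{et}(\X,\widehat{\Z}(d))=\varprojlim_m H^i_{et}(\X,\Z/m(d))$. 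Passing to the limit in the universal coefficient sequence $0\to A/m\to H^{2d-1}_{et}(\X,\Z/m(d))\to B[m]\to 0$ then yields
\[ 0\to A^{\widehat{}}\to H^{2d-1}_{et}(\X,\widehat{\Z}(d))\to \varprojlim_m B[m]\to 0. \]

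Next I would show that $B$ is divisible. By the computation in the proof of Proposition \ref{exact-sequences} one has $H^{2d}_{et}(\X,\widehat{\Z}(d))=0$, which by the above equals $\varprojlim_m H^{2d}_{et}(\X,\Z/m(d))$; feeding this into the degree $2d$ universal coefficient sequence and using left exactness of $\varprojlim$ gives $\varprojlim_m B/m=0$. Since the transition maps $B/m'\to B/m$ are surjective, the limit surjects onto each $B/m$, whence $B/m=0$ for all $m$, i.e. $B$ is divisible. Consequently $TB=0$ if and only if the Tate module $\varprojlim_m B[m]$ vanishes. Now if $c$ is surjective, then in the limit sequence above the injection $A^{\widehat{}}\hookrightarrow H^{2d-1}_{et}(\X,\widehat{\Z}(d))$ becomes an isomorphism, so $\varprojlim_m B[m]=0$ and hence $TB=0$; moreover $A\to A^{\widehat{}}$ is then surjective, which is exactly \eqref{forcoker}, and the ``in particular'' statement is the surjectivity of $c$ itself. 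Thus all three conclusions follow from surjectivity of $c$.

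It remains to prove that $c$ is surjective, and this is where the hypothesis is used. The push-forwards $A\to\O_K^{\times}$ and $H^{2d-1}_{et}(\X,\widehat{\Z}(d))\to\O_K^{\times}$ have finite cokernel by Lemma \ref{finitecoker}, the latter has finite kernel $H^{2d-1}_{et}(\X,\widehat{\Z}(d))^0$ by hypothesis, and $\O_K^{\times}\cong\widehat{\O_K^{\times}}$ is profinitely complete. Comparing the two push-forward sequences through Lemma \ref{lem-cokertokersequence}, in which the common base contribution $\O_K^{\times}$ supplies the isomorphic columns, reduces surjectivity of $c$ to surjectivity on the geometric kernels. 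I would check this $\ell$-adically: for $\ell\neq p$ the group $H^{2d-1}_{et}(\X,\Z_\ell(d))\cong H^{2d-1}_{et}(\X_s,\Z_\ell(d))$ is finite and vanishes for almost all $\ell$ by \cite[Lemma 4.17]{Geisser-Morin-21}, so there the matching is finite and formal.

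I expect the main obstacle to be the prime $p$: one must show that the classes in $A$, after $p$-adic completion, already exhaust $H^{2d-1}_{et}(\X,\Z_p(d))$. Here I would use that $\O_K^{\times}$ is $p$-adically complete and that $H^{2d-1}_{et}(\X,\Z_p(d))^0$ is finite, as established in the proof of Proposition \ref{propkey} (where $H^{2d-1}_{et}(\X,\Z_p(d))$ is a finitely generated $\Z_p$-module whose kernel to $H^1_{et}(\O_K,\Z_p(1))$ is torsion, hence finite). Granting this $p$-adic surjectivity, applying profinite completion to $0\to A^0\to A\to \mathrm{im}(A\to\O_K^{\times})\to 0$ and a final diagram chase via Lemma \ref{lem-cokertokersequence} yield that $c$ is onto, and with the reductions above this proves $TB=0$, the surjectivity of \eqref{forcoker}, and the surjectivity of $H^{2d-1}_{et}(\X,\Z(d))\to H^{2d-1}_{et}(\X,\widehat{\Z}(d))$.
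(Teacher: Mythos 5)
Your formal reductions are fine as far as they go: the Milnor-sequence bookkeeping, the factorization of $c\colon H^{2d-1}_{et}(\X,\Z(d))\to H^{2d-1}_{et}(\X,\widehat{\Z}(d))$ through the profinite completion with cokernel the Tate module of $B:=H^{2d}_{et}(\X,\Z(d))$, and the observation that surjectivity of $c$ implies all three conclusions are all correct. (Your side argument that $B$ is divisible, via $H^{2d}_{et}(\X,\widehat{\Z}(d))=0$ from the proof of Proposition \ref{exact-sequences}, is also a valid unconditional observation, and it patches your reading of $T$ as the torsion subgroup --- in the paper $T$ denotes the Tate module, and only for divisible $B$ do the two vanishing statements coincide.) But the entire content of the lemma is the surjectivity of $c$, and there your argument stops being a proof: the decisive $p$-adic step (``the classes in $A$, after $p$-adic completion, already exhaust $H^{2d-1}_{et}(\X,\Z_p(d))$'') is explicitly \emph{granted}, not shown. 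Worse, the plan is circular: the cokernel of $H^{2d-1}_{et}(\X,\Z(d))^{\widehat{}}\otimes\Z_p\to H^{2d-1}_{et}(\X,\Z_p(d))$ is exactly the $p$-adic Tate module of $B$, so surjectivity of $c$ at $p$ \emph{presupposes} the vanishing $T_pB=0$ that you intended to deduce from it, and you give no independent argument for that vanishing. In addition, the inputs you invoke for the local checks --- finiteness of $H^{2d-1}_{et}(\X,\Z_\ell(d))$ for $\ell\neq p$ from \cite[Lemma 4.17]{Geisser-Morin-21} and finite generation of $H^{2d-1}_{et}(\X,\Z_p(d))$ over $\Z_p$ from the proof of Proposition \ref{propkey} --- are established under good or strictly semi-stable reduction, whereas Lemma \ref{lemreduct} assumes only that $H^{2d-1}_{et}(\X,\widehat{\Z}(d))^0$ is finite (and Theorem \ref{neutheorem} is deliberately stated in that generality).

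The paper's proof avoids all of this and is purely formal, and the missing idea is worth naming: one proves the Tate-module vanishing \emph{first}, by comparing the short exact sequence $0\to H^{2d-1}_{et}(\X,\Z(d))^{\widehat{}}\to H^{2d-1}_{et}(\X,\widehat{\Z}(d))\to TH^{2d}_{et}(\X,\Z(d))\to 0$ with its analogue for $\mathrm{Spec}(\O_K)$, where the completion equals $\O_K^\times$ and the Tate-module term is zero. The snake lemma squeezes $TH^{2d}_{et}(\X,\Z(d))$ between $\ker(b)=H^{2d-1}_{et}(\X,\widehat{\Z}(d))^0$ (finite by hypothesis) and $\cok(a)$ (finite by Lemma \ref{finitecoker}), so this torsion-free group is finite, hence zero; it also identifies $\ker(a)\simeq\ker(b)$ as finite groups. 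Then the two four-term push-forward sequences to $\O_K^\times$ and Lemma \ref{lem-cokertokersequence} show that the cokernel of \eqref{forcoker} is finite, and since the cokernel of any map $A\to\widehat{A}$ is divisible, it vanishes; the ``in particular'' statement follows because $H^{2d-1}_{et}(\X,\Z(d))^{\widehat{}}\to H^{2d-1}_{et}(\X,\widehat{\Z}(d))$ is now an isomorphism. If you establish the Tate-module vanishing this way, your remaining diagram chase does go through, with no $\ell$-by-$\ell$ decomposition and no reduction hypotheses.
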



\begin{proof}
Consider the morphism of short exact sequences:
\[ \xymatrix{
0\ar[r] & H^{2d-1}_{et}(\X,\Z(d))^{\widehat{}}  \ar[d]^{a} \ar[r] &   H^{2d-1}_{et}(\X,\widehat{\Z}(d))             \ar[r] \ar[d]^{b}& T H^{2d}_{et}(\X,\Z(d))   \ar[d] \ar[r] & 0\\
0\ar[r]& H^{1}_{et}(\O_K,\Z(1))^{\widehat{}}  \ar[r]^{\sim} &   H^{1}_{et}(\O_K,\widehat{\Z}(1))          \ar[r]    & 0     \ar[r] & 0 
}
\]
The group $\cok(a)$ is finite by Lemma \ref{finitecoker}. 
Since  $\ker(b)$ is finite by assumption, the Tate module 
$T H^{2d}_{et}(\X,\Z(d))$ is finite by the snake lemma. 
Since $T H^{2d}_{et}(\X,\Z(d))$ is also torsion-free, it is in fact trivial.

We obtain an isomorphism of finite groups $\ker(a)\simeq \ker(b)$. 
Now we consider the morphism of exact sequences
\[ \xymatrix{
0\ar[r]  & H^{2d-1}_{et}(\X,\Z(d))^{0}  \ar[d] \ar[r] &   H^{2d-1}_{et}(\X,\Z(d))  \ar[r] \ar[d]^{(\ref{forcoker})}& \O^{\times}_K  \ar[d]^{=} \ar[r] & C\ar[d]\ar[r] &0\\
0\ar[r]  & 
\ker(a) \ar[r] &   H^{2d-1}_{et}(\X,\Z(d))^{\widehat{}}  \ar[r] & \O^{\times}_K   \ar[r]& C' \ar[r] &0
}
\]
where $C$ and $C'$ are finite by Lemma \ref{finitecoker}. Since $\ker(a)$
is finite, it follows from Lemma \ref{lem-cokertokersequence} that 
$\cok(\ref{forcoker})$ is finite as well. But $\cok(\ref{forcoker})$ 
is also divisible, since $\cok(A\rightarrow \widehat{A})$ is divisible for 
any abelian group $A$. Hence $\cok(\ref{forcoker})$ vanishes. 
\end{proof}

\begin{lem} \label{lemvanishingChow}
One has $CH^d(\X_{\O_{K^{un}}})=0$.
\end{lem}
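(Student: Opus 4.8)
The plan is to show the Chow group of $0$-cycles $CH^d(\X_{\O_{K^{un}}})$ vanishes, where $\X_{\O_{K^{un}}}$ is the base change of $\X$ to the strictly henselian local ring $\O_{K^{un}}$. The key geometric fact to exploit is that $\O_{K^{un}}$ is strictly henselian, so the special fiber $\X_{\bar s}$ and the whole scheme $\X_{\O_{K^{un}}}$ share the same set of closed points: every closed point of $\X_{\O_{K^{un}}}$ lies in the closed fiber $\X_{\bar s}$ because the residue field $\bar\kappa$ of $\O_{K^{un}}$ is algebraically closed. Indeed $CH^d$ of a $d$-dimensional regular scheme is the Chow group of zero-cycles, generated by closed points modulo rational equivalence.

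First I would identify $CH^d(\X_{\O_{K^{un}}})$ with the top Chow group generated by closed points of $\X_{\bar s}$, using proper pushforward and the localization sequence $CH^d(\X_{\bar s}) \to CH^d(\X_{\O_{K^{un}}}) \to CH^d(\X_{K^{un}}) \to 0$. The generic fiber $\X_{K^{un}}$ has dimension $d-1$, so its Chow group $CH^d$ in codimension-$d$ (i.e.\ dimension $-1$) vanishes, giving surjectivity of $CH^d(\X_{\bar s}) \to CH^d(\X_{\O_{K^{un}}})$. So it suffices to show every zero-cycle supported on $\X_{\bar s}$ is rationally trivial in $\X_{\O_{K^{un}}}$.

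The main step is then to show that any closed point $x$ of $\X_{\bar s}$ becomes rationally equivalent to zero after allowing rational equivalences in the total space $\X_{\O_{K^{un}}}$. Since $\X$ is proper and flat over $\O_K$ with the generic point of $\Spec \O_{K^{un}}$ having algebraically closed-after-completion behavior, the strategy is to move the point along a curve. Concretely, I would find, through any closed point $x\in \X_{\bar s}$, a horizontal curve $C\subset \X_{\O_{K^{un}}}$ (flat over $\O_{K^{un}}$) whose special fiber passes through $x$; the generic fiber of $C$ is a point over $K^{un}$, and because $K^{un}$ is the fraction field of the strictly henselian $\O_{K^{un}}$, one can specialize, exhibiting $[x]$ as the specialization of a cycle coming from the generic fiber. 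The cleanest route is to invoke that for $\X$ proper over the strictly henselian $\O_{K^{un}}$ the zero-cycle map factors so that rational equivalence in the total space kills the difference between a closed point and its horizontal spread; alternatively one cites that $\X_{K^{un}}$ has trivial $CH_0$-obstruction because $K^{un}$ is algebraically closed (so any two $K^{un}$-rational points on a geometrically connected proper scheme are rationally equivalent, and the normalization/good-reduction hypothesis ensures the special fiber points all spread to such horizontal points).

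The hard part will be justifying that every closed point of the special fiber is rationally equivalent to a single fixed point, i.e.\ that $\X_{\bar s}$ contributes nothing nontrivial to $CH^d(\X_{\O_{K^{un}}})$. This amounts to a $CH_0$-triviality statement over a strictly henselian base, and the natural tool is the good or strictly semi-stable reduction hypothesis combined with the fact that over the algebraically closed residue field all rational points are rationally equivalent on a geometrically connected proper variety, lifted to horizontal curves using that $\O_{K^{un}}$ is henselian with algebraically closed residue field (so sections through smooth points exist, and flat families of such sections give the required rational equivalences). I expect the proof to reduce, after using properness and the localization sequence, to the known vanishing $CH_0(\X_{K^{un}}) = CH_0$ of a proper geometrically connected variety over the separably (here algebraically, after completion) closed field $K^{un}$, which is $\Z$ generated by the class of any closed point, combined with compatibility of specialization.
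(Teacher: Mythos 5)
There is a genuine gap, and it is at the heart of the statement. Your endgame aims to show that all closed points of $\X_{\bar s}$ are rationally equivalent \emph{to one another} (a $CH_0$-triviality statement); even if that worked, it would only show that $CH^d(\X_{\O_{K^{un}}})$ is generated by the class of a single point, i.e.\ is a quotient of $\Z$ --- not that it vanishes. The lemma asserts the full group is zero, so you must kill the class of each individual closed point. The mechanism the paper uses for this is the one your sketch never reaches: by a result of Saito--Sato (\cite[Lemma 7.2]{Saito-Sato-10}, which is also what justifies your ``horizontal curve'' step, including through points where the special fiber is singular), every closed point $x$ lies on a regular integral one-dimensional closed subscheme $Z$ of $\X_{\O_{K^{un}}}$ meeting the generic fiber; $Z$ is proper, hence finite, over the henselian base, so $Z$ is the spectrum of a henselian discrete valuation ring with fraction field $L$, and in the localization sequence for $Z$ the map $CH_0(L,1)\simeq L^{\times}\rightarrow \Z=CH_0(x)$ is the (surjective) valuation. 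Thus $CH_0(Z)=0$, so $[x]=0$ already on $Z$ --- concretely, $[x]$ is the divisor of a uniformizer --- and pushing forward gives $[x]=0$ in $CH^d(\X_{\O_{K^{un}}})$. Nothing in your proposal produces a rational function whose divisor is a single point class, which is what the vanishing requires.

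Moreover, two of the facts you lean on are false. First, $K^{un}$ is not algebraically closed: only the residue field of $\O_{K^{un}}$ is; $K^{un}$ admits plenty of (ramified) finite extensions, so closed points of the generic fiber need not be rational points, and no ``$CH_0$ of a proper variety over an algebraically closed field'' statement applies to $\X_{K^{un}}$. Second, even over a genuinely algebraically closed field, two rational points on a proper geometrically connected variety need \emph{not} be rationally equivalent --- for an elliptic curve $E$ over $\bar{\kappa}$ one has $\ker(\deg)\simeq E(\bar{\kappa})$, which is enormous --- so the ``known vanishing'' you invoke in your last paragraph does not exist. A smaller remark: the good or strictly semi-stable reduction hypothesis you repeatedly appeal to is not needed (and is not assumed in the lemma); regularity, properness and flatness over $\O_K$ suffice, precisely because the Saito--Sato lemma works in that generality. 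Your opening reduction via the localization sequence (using $\dim \X_{K^{un}}=d-1$ to see that closed points of the special fiber generate) is correct and is implicit in the paper's first step, but everything after it needs to be replaced by the valuation argument above.
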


\begin{proof}
We set $\X^{un}:=\X_{\O_{K^{un}}}$. We denote the closed fiber of $\X^{un}$ by $\X_{\bar{s}}$. 
It suffices to show that the image of every closed point $x$ of 
$\X^{un}$ in $CH_0(\X^{un})$ vanishes. Note that $\O_{K^{un}}$ is an henselian discrete valuation ring.
Since $\X^{un}$ is proper over $\O_{K^{un}}$, $x$ lies in the
closed fiber $\X_{\bar{s}}$. By \cite[Lemma 7.2]{Saito-Sato-10},
there exists a regular integral closed subscheme $Z$ of $\X$ of dimension $1$
containing $x$ which meets the generic fiber. Now $Z$ is proper over 
the base $\O_{K^{un}}$, hence finite. Since the base is henselian, this implies
that $Z$ is the spectrum of an henselian discrete valuation ring.
But for the spectrum $Z$ of a discrete valuation ring with function field $K$
and closed point $x$ 
the map $CH_0(K,1) \simeq K^{\times}\rightarrow \Z=CH_0(x)$ in 
the localization sequence is surjective,
hence its cokernel $CH_0(Z)$ vanishes. Thus the map $\Z=CH_0(x)\rightarrow CH_0(\X^{un})$ is the zero map, since it factors through $CH_0(Z)$.
\end{proof}

\begin{thm}\label{neutheorem}
Assume that the group $H^{2d-1}_{et}(\X,\widehat{\Z}(d))^0$ is finite. 
Then there are exact sequences of abelian groups
$$0\rightarrow  D \rightarrow  H_{et}^{2d-1}(\X_K,\Z(d))\rightarrow 
\widehat{H}_{et}^{2d-1}(\X_K,\Z(d))^{\delta}\rightarrow 0$$
$$0\rightarrow  D \rightarrow  H_{et}^{2d-1}(\X_K,\Z(d))^0\rightarrow 
\widehat{H}_{et}^{2d-1}(\X_K,\Z(d))^{0\delta}\rightarrow 0,$$
where $D$ is divisible, and 
$$\Tor D\simeq \cok (H_{et}^{2d-2}(\X_K,\Z(d))\to
\widehat{H}_{et}^{2d-2}(\X_K,\Z(d))^\delta)\otimes\Q/\Z,$$ 
\end{thm}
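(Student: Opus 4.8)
The plan is to compare the localization sequence for \'etale motivic cohomology with its completed analogue and feed the outcome into Lemma \ref{lem-cokertokersequence}. Writing $f^i\colon H^i_{et}(\X,\Z(d))\to H^i_{et}(\X,\widehat{\Z}(d))$ and $g^i\colon H^i_{et}(\X_K,\Z(d))\to \widehat{H}^i_{et}(\X_K,\Z(d))^\delta$ for the completion maps, I would first apply $\mathrm{disc}$ to the cofiber sequence defining $R\widehat{\Gamma}_{et}(\X_K,\Z(d))$ and use the identification $\mathrm{disc}(R\widehat{\Gamma}_{et}(\X,\Z(d)))\simeq R\Gamma_{et}(\X,\widehat{\Z}(d))$ of Proposition \ref{ar-et-hat-et-complete}; this produces a fiber sequence of discrete complexes $R\Gamma_{et}(\X_s,Ri^!\Z(d))\to R\Gamma_{et}(\X,\widehat{\Z}(d))\to R\widehat{\Gamma}_{et}(\X_K,\Z(d))^\delta$ receiving a map from the motivic localization sequence which is the identity on the $\X_s$-term. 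This is exactly the shape of Lemma \ref{lem-cokertokersequence}, so for every $i$ there is an exact sequence $0\to\cok(f^i)\to\cok(g^i)\to\ker(f^{i+1})\to\ker(g^{i+1})\to 0$.

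The key structural input is that $f$ is an equivalence modulo $m$ for every $m$ (the target is the derived $\widehat{\Z}$-completion, so $f\otimes^L\Z/m$ is an equivalence), and hence so is $g$, since the two fiber sequences agree on $\X_s$. Thus $E:=\cone(g)\simeq\cone(f)$ has uniquely divisible cohomology, i.e. $\Q$-vector spaces. The long exact sequence of $E$ shows immediately that each $\ker(g^i)$ is divisible and each $\cok(g^i)$ is torsion-free; in particular $D:=\ker(g^{2d-1})$ is divisible. That same sequence identifies $D$ with the quotient $H^{2d-2}(E)/\cok(g^{2d-2})$ of a $\Q$-vector space $V=H^{2d-2}(E)$ by the torsion-free subgroup $S=\cok(g^{2d-2})$. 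Since for a torsion-free $S\subseteq V$ with $V$ a $\Q$-vector space one has $\Tor(V/S)\simeq (S\otimes\Q)/S\simeq S\otimes\Q/\Z$, this gives $\Tor D\simeq\cok(g^{2d-2})\otimes\Q/\Z$, the asserted formula.

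The main obstacle is the surjectivity of $g^{2d-1}$, i.e. $\cok(g^{2d-1})=0$. Here I would use the finiteness hypothesis through Lemma \ref{lemreduct}, which gives $\cok(f^{2d-1})=0$, together with $H^{2d}_{et}(\X,\widehat{\Z}(d))=0$ (this follows from duality and the surjectivity of $\pi_1^{ab}(\X_K)\to\pi_1^{ab}(\X)$, exactly as in the proof of Proposition \ref{exact-sequences}). The vanishing forces $\ker(f^{2d})=H^{2d}_{et}(\X,\Z(d))$, so the degree $2d-1$ instance of Lemma \ref{lem-cokertokersequence} identifies $\cok(g^{2d-1})$ with $\ker\big(H^{2d}_{et}(\X,\Z(d))\to H^{2d}_{et}(\X_K,\Z(d))\big)=\im\big(H^{2d}_{et}(\X_s,Ri^!\Z(d))\to H^{2d}_{et}(\X,\Z(d))\big)$, the part supported on the special fibre, generated by classes of zero-cycles on $\X_s$. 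By Lemma \ref{lemvanishingChow} each such class vanishes in $CH^d(\X_{\O_{K^{un}}})$, hence already over some finite unramified $\O_{K'}$; since proper pushforward composed with pullback is multiplication by $[K':K]$, its class in $H^{2d}_{et}(\X,\Z(d))$ is torsion. As $\cok(g^{2d-1})$ is torsion-free by the key step, it vanishes, yielding the first exact sequence.

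Finally, for the $0$-superscript version I would compare the first exact sequence with the push-forward to the base. Functoriality (Remark \ref{remfunctoriality}) makes the push-forwards to $H^1_{et}(\Spec K,\Z(1))=K^\times$ and to $\widehat{H}^1_{et}(\Spec K,\Z(1))^\delta=K^\times$ compatible with $g$, and the induced map $K^\times\to K^\times$ is the isomorphism $g^1_{\Spec K}$ (the $\Spec K$ case, where $\cone$ vanishes). Applying the snake lemma to the morphism from $0\to D\to H^{2d-1}_{et}(\X_K,\Z(d))\to\widehat{H}^{2d-1}_{et}(\X_K,\Z(d))^\delta\to 0$ to the sequence $0\to 0\to K^\times\to K^\times\to 0$ then yields the second exact sequence with the same divisible kernel $D$.
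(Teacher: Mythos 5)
Your overall architecture is sound and, for the most part, coincides with the paper's proof: the application of Lemma \ref{lem-cokertokersequence} to the morphism of localization sequences, the use of Lemma \ref{lemreduct} for $\cok(f^{2d-1})=0$, the mod-$m$ equivalence as the source of divisibility of the kernels and torsion-freeness of the cokernels, the computation $\Tor D\simeq \cok(g^{2d-2})\otimes\Q/\Z$, and the kernel/snake-lemma derivation of the degree-$0$ sequence are all correct and are essentially the paper's steps. Your packaging via the cone $E=\cone(g)\simeq\cone(f)$, whose cohomology consists of $\Q$-vector spaces, is a clean equivalent of the paper's argument with the coefficient sequences $0\to\Z\stackrel{\times m}{\to}\Z\to\Z/m\to 0$, and your identification $\Tor(V/S)\simeq S\otimes\Q/\Z$ reproduces the paper's colimit computation. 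Your appeal to $H^{2d}_{et}(\X,\widehat{\Z}(d))=0$ via the duality argument of Proposition \ref{exact-sequences} is also legitimate (the paper's own proof of the theorem avoids it, instead observing that $\ker(f^{2d})=UH^{2d}_{et}(\X,\Z(d))$ is uniquely divisible by Milne's result, but your route gives the same conclusion that $\cok(g^{2d-1})$ embeds into a subquotient of $H^{2d}_{et}(\X,\Z(d))$).

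There is, however, a genuine gap at the decisive step $\cok(g^{2d-1})=0$. You correctly identify $\cok(g^{2d-1})$ with $\im\bigl(H^{2d}_{et}(\X_s,Ri^{!}\Z(d))\to H^{2d}_{et}(\X,\Z(d))\bigr)$, but then assert that this image is ``generated by classes of zero-cycles on $\X_s$'' and apply Lemma \ref{lemvanishingChow} only to such classes. Since $Ri^{!}\Z(d)\simeq\Z^c(0)^{\X_s}[-2d]$, the group in question is $H^{0}_{et}(\X_s,\Z^c(0))$, an \emph{\'etale hypercohomology} group, and the natural map $CH_0(\X_s)=H^{0}_{Zar}(\X_s,\Z^c(0))\to H^{0}_{et}(\X_s,\Z^c(0))$ is not surjective in general: its cokernel picks up the terms $H^{p}_{et}(\X_s,\mathcal{H}^{-p}(\Z^c(0)))$, $p\geq 1$, of the hypercohomology spectral sequence together with Galois-cohomological contributions over the finite field; this defect is precisely what Kato homology measures, and it need not vanish for the singular (e.g.\ normal crossing) special fibers allowed by the hypotheses of the theorem. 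So your transfer argument disposes only of the zero-cycle part of $\cok(g^{2d-1})$, leaving the rest unaccounted for. The repair is to show that the whole group $H^{2d}_{et}(\X,\Z(d))$ is torsion, which is what the paper does: it uses $H^{2d}_{et}(\X_{\O_{K^{un}}},\Z(d))\simeq H^{2d}_{Zar}(\X_{\O_{K^{un}}},\Z(d))\simeq CH^d(\X_{\O_{K^{un}}})=0$ (Lemma \ref{lemvanishingChow}) and the Hochschild--Serre spectral sequence for $\X_{\O_{K^{un}}}/\X$, with $G_s$ of strict cohomological dimension $2$, so that $H^{2d}_{et}(\X,\Z(d))$ is built from Galois cohomology in positive degrees and is therefore torsion; combined with the torsion-freeness of $\cok(g^{2d-1})$ this gives the vanishing. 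Equivalently, your own continuity-plus-transfer argument works verbatim when applied to an \emph{arbitrary} class of $H^{2d}_{et}(\X,\Z(d))$, but only after invoking the identification of the top-degree \'etale motivic cohomology over $\O_{K^{un}}$ with $CH^d(\X_{\O_{K^{un}}})$ --- a step your write-up skips by restricting to zero-cycles.
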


Note that by Proposition \ref{propkey} the hypothesis of the Theorem 
is satisfied if $\X_K$ has good or strictly semi-stable reduction. 
If $A$ is an abelian group, we denote by $UA:=\cap_{m} m\cdot A$ the subgroup of 
all divisible elements in $A$, and by $A_{div}$ the maximal divisible 
subgroup of $A$.

\begin{proof}
Let $\ker^{i}(\X)$ and $\cok^{i}(\X)$ be the kernel and cokernel of the 
map $H_{et}^{i}(\X,\Z(d))\rightarrow \widehat{H}_{et}^{i}(\X,\Z(d))^{\delta}$,
respectively. 
We define similarly $\ker^{i}(\X_{K})$ and $\cok^{i}(\X_{K})$. 
The morphism of long exact sequences
{\small{
\[ \xymatrix{
\cdots H^{2d-1}_{et}(\X_s,Ri^{!}\Z(d))\ar[r] \ar[d]^{\simeq}&
H^{2d-1}_{et}(\X,\Z(d)) \ar[r] \ar[d] &
H^{2d-1}_{et}(\X_{K},\Z(d)) \ar[r]^{}\ar[d]^{}&
H^{2d}_{et}(\X_s,Ri^{!}\Z(d))\ar[d]^{\simeq}\cdots\\
\cdots H^{2d-1}_{et}(\X_s,Ri^{!}\Z(d))\ar[r] &
\widehat{H}^{2d-1}_{et}(\X,\Z(d))^{\delta} \ar[r] &
\widehat{H}^{2d-1}_{et}(\X_{K},\Z(d))^{\delta} \ar[r]^{}&
H^{2d}_{et}(\X_s,Ri^{!}\Z(d))]\cdots
}
\]
}}
yields by Lemma \ref{lem-cokertokersequence} the exact sequence
\begin{equation}\label{isoCok}
0\rightarrow \cok^{2d-1}(\X) \rightarrow \cok^{2d-1}(\X_{K}) \rightarrow 
\ker^{2d}(\X) \rightarrow \ker^{2d}(\X_{K})  \rightarrow 0.
\end{equation}
First we note that $\cok^{2d-1}(\X)=0$ because 
$$H_{et}^{2d-1}(\X,\Z(d))\rightarrow 
H_{et}^{2d-1}(\X,\widehat\Z(d))\simeq \widehat 
H_{et}^{2d-1}(\X,\Z(d))^\delta$$
is surjective by Lemma \ref{lemreduct} and Proposition 
\ref{ar-et-hat-et-complete}.

Since the map 
$H_{et}^{2d}(\X,\Z(d))^{\widehat{}}\rightarrow H_{et}^{2d}(\X,\widehat{\Z}(d))$
is injective, we have $\ker^{2d}(\X)= UH^{2d}_{et}(\X,\Z(d))$. 
By \cite[Cor.\  4]{Milne15}, the group $UH^{2d}_{et}(\X,\Z(d))$ is uniquely
divisible because $TH^{2d}_{et}(\X,\Z(d))=0$ and 
$H^{2d}_{et}(\X,\Z(d))/m\subset H^{2d}_{et}(\X,\Z/m(d))$ is finite for any $m$. 
Moreover, 
$$H^{2d}_{et}(\X_{\O_{K^{un}}},\Z(d))\simeq H^{2d}_{Zar}(\X_{\O_{K^{un}}},\Z(d))
\simeq CH^d(\X_{\O_{K^{un}}})=0$$
by Lemma \ref{lemvanishingChow}, hence
the Hochschild-Serre spectral sequence gives a short exact sequence
\begin{multline*}
\stackrel{d_2^{0,2d-1}}{\longrightarrow} 
H^2(G_s,H^{2d-2}_{et}(\X_{\O_{K^{un}}},\Z(d)))
\rightarrow H^{2d}_{et}(\X,\Z(d))\\
\rightarrow H^1(G_s,H^{2d-1}_{et}(\X_{\O_{K^{un}}},\Z(d)))\rightarrow 0
\end{multline*}
since $G_s$ has strict cohomological dimension $2$. Since Galois cohomology is 
torsion in degrees $>0$, the abelian group $H^{2d}_{et}(\X,\Z(d))$ is torsion as
well. Therefore, $\ker^{2d}(\X)=UH^{2d}_{et}(\X,\Z(d))$ 
is both uniquely divisible and torsion, hence it vanishes.
In view of (\ref{isoCok}), we obtain $\cok^{2d-1}(\X_{K})=0$.

To obtain the second exact sequence it suffices to consider the kernels of
the commutative diagram with exact rows
\[ \xymatrix{
0\ar[r]& D\ar[d] \ar[r]&H^{2d-1}_{et}(\X_K,\Z(d))\ar[r]\ar[d] &
 \widehat{H}^{2d-1}_{et}(\X_K,\Z(d))^{\delta}  \ar[d]\ar[r] &   0\\
0\ar[r]&0           \ar[r]       &K^{\times}  \ar[r]                                                & K^{\times}                                                                           \ar[r] &    0                    
}
\]
Applying Lemma \ref{lem-cokertokersequence} 
to the coefficient sequences induced by 
$0\to \Z\stackrel{\times m}{\longrightarrow}\Z\to \Z/m\to 0$, and using that  
$$R\Gamma_{et}(\X_{K},\Z(d))\otimes^L\Z/m\Z \rightarrow 
R\widehat{\Gamma}_{et}(\X_{K},\Z(d))^{\delta} \otimes^L\Z/m\Z$$
is an equivalence by definition,  we obtain an exact sequence
$$ 0\to \cok^{i-1}(\X_{K})\stackrel{\times m}{\longrightarrow} \cok^{i-1}(\X_{K})\to 
\ker^{i}(\X_{K})\stackrel{\times m}{\longrightarrow} \ker^{i}(\X_{K})\to 0,$$
which implies that $\ker^{i}(\X_{K})$ is divisible and 
and ${}_m \ker^{i}(\X_{K})\simeq \cok^{i-1}(\X_{K})/m$ for all $i$ and $m$.
In particular, $D$ is divisible and in the colimit we obtain
$\Tor D\simeq  \cok^{2d-2}(\X_{K})\otimes\Q/\Z$.
\end{proof}

\subsection{The maps $\widehat{H}_{et}^{2d-1}(\X_K,\Z(d))
\rightarrow{H}_{ar}^{2d-1}(\X_K,\Z(d)) $.}

\begin{prop}\label{propforgap}
Assume that $\X$ has good or strictly semi-stable reduction. Then the group 
$\widehat{H}^{2d-1}_{et}(\X_K,\Z(d))^0$ is torsion. 
\end{prop}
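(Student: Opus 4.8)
The plan is to run the localization sequence for $R\widehat{\Gamma}_{et}(-,\Z(d))$ and compare it, via the proper pushforward $\X\to\Spec(\O_K)$, with the localization sequence for $\Spec(\O_K)$, exactly as in Proposition \ref{exact-sequences} but now with \'etale (rather than arithmetic) coefficients on the special fibre. By definition $R\widehat{\Gamma}_{et}(\X_K,\Z(d))$ is the cofibre of $R\Gamma_{et}(\X_s,Ri^!\Z(d))\to R\widehat{\Gamma}_{et}(\X,\Z(d))$, so the relevant portion of the long exact sequence reads
$$\widehat{H}^{2d-1}_{et}(\X,\Z(d))\to \widehat{H}^{2d-1}_{et}(\X_K,\Z(d))\xrightarrow{g} H^{2d}_{et}(\X_s,Ri^!\Z(d))\to \widehat{H}^{2d}_{et}(\X,\Z(d)).$$
Since $Ri^!\Z(d)\simeq \Z^c(0)^{\X_s}[-2d]$, the boundary target is $H^{2d}_{et}(\X_s,Ri^!\Z(d))\simeq H^0_{et}(\X_s,\Z^c(0))$, and the last group is trivial because $\widehat{H}^{2d}_{et}(\X,\Z(d))^{\delta}\simeq H^{2d}_{et}(\X,\widehat{\Z}(d))=0$ by Proposition \ref{ar-et-hat-et-complete} and the proof of Proposition \ref{exact-sequences}; hence $g$ is surjective up to the discrete identifications.

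First I would record the two finiteness inputs. The left-hand term is controlled by Proposition \ref{propkey}, which gives that $\widehat{H}^{2d-1}_{et}(\X,\Z(d))^0\simeq H^{2d-1}_{et}(\X,\widehat{\Z}(d))^0$ is finite. For the boundary target I would identify $H^0_{et}(\X_s,\Z^c(0))$ with $CH_0(\X_s)$ and observe, by functoriality of the localization sequence under $\X\to\Spec(\O_K)$, that $g$ is compatible with the valuation $K^{\times}\to\Z$ appearing in the localization sequence of $\Spec(\O_K)$, so that the induced map $H^0_{et}(\X_s,\Z^c(0))\to\Z$ is the degree map $CH_0(\X_s)\to\Z$. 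Because $\X_s$ is proper and geometrically connected over the finite field $\kappa(s)$, the degree map is rationally an isomorphism, i.e. its kernel $CH_0(\X_s)^0$ is torsion (indeed finite by the class field theory of Kato--Saito, compatibly with the perfectness of $R\Gamma_W(\X_s,\Z^c(0))$).

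Now I would assemble the morphism of localization sequences for $\X/\O_K$ and for $\Spec(\O_K)$ into a diagram with vertical maps to $\O_K^{\times}$, $K^{\times}$ and $\Z$, as in Proposition \ref{exact-sequences}, and pass to the kernels of these vertical maps. This yields an exact sequence
$$\widehat{H}^{2d-1}_{et}(\X,\Z(d))^0\to \widehat{H}^{2d-1}_{et}(\X_K,\Z(d))^0\xrightarrow{g^0} \ker\!\big(CH_0(\X_s)\xrightarrow{\deg}\Z\big)$$
whose outer two groups are torsion by the previous paragraph. Therefore $\widehat{H}^{2d-1}_{et}(\X_K,\Z(d))^0$ is an extension of a torsion group by a torsion group, hence torsion, which is the assertion.

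Conceptually this is the \'etale shadow of Proposition \ref{exact-sequences}: there the boundary lands in $H^W_1(\X_s,\Z)$, whose degree-zero part has rank $r$ and produces the free part of $H^{2d-1}_{ar}(\X_K,\Z(d))^0$; here the boundary only sees the rank-one group $CH_0(\X_s)$, the missing rank $r$ being precisely the rational term $H^{-1}_{et}(\X_s,\Q^c(0))$ of the cofibre sequence (\ref{descent-triangle-top-redefined-n=d}). I expect the main obstacle to be the second paragraph: matching the localization boundary with the degree map (functoriality of the localization sequence under the pushforward) and invoking the finiteness of the degree-zero Chow group $CH_0(\X_s)^0$ of the special fibre over the finite field, which is the one genuinely arithmetic input.
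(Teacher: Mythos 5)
Your proposal matches the paper's proof essentially step for step: the paper likewise replaces $R\Gamma_{ar}(-,\Z(d))$ by $R\widehat{\Gamma}_{et}(-,\Z(d))$ in the diagram of Proposition \ref{exact-sequences}, obtains the exact sequence $\widehat{H}^{2d-1}_{et}(\X,\Z(d))^0\to \widehat{H}^{2d-1}_{et}(\X_K,\Z(d))^0\to H^0_{et}(\X_s,\Z^c(0))^0$, invokes Proposition \ref{propkey} for finiteness of the first term, and reduces the third to the rational degree map $CH_0(\X_s)_{\Q}\to\Q$ being an isomorphism. The one imprecision is your final citation: Kato--Saito applies to smooth proper varieties, whereas in the semi-stable case $\X_s$ is a (possibly singular) normal crossing scheme, so the paper instead observes that every class in $CH_0(\X_s)_{\Q}$ is supported on a curve and quotes the curve case \cite[Thm.\ 3.1, Prop.\ 6.2]{Geisser-Crelle} --- alternatively, its Proposition \ref{Chowfinite} (proved independently, for arbitrary proper schemes over a finite field) supplies exactly the finiteness of $CH_0(\X_s)^0$ you invoke.
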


\begin{proof}
Replacing $R\Gamma_{ar}(-,\Z(d))$ with $R\widehat{\Gamma}_{et}(-,\Z(d))$ 
in the diagram of Proposition \ref{exact-sequences}, we obtain an exact sequence
$$\widehat{H}^{2d-1}_{et}(\X,\Z(d))^0\rightarrow 
\widehat{H}^{2d-1}_{et}(\X_K,\Z(d))^0\rightarrow H^0_{et}(\X_s,\Z^c(0))^0$$
where $H^0_{et}(\X_s,\Z^c(0))^0$ is the kernel of the push-forward map 
$H^0_{et}(\X_s,\Z^c(0))\rightarrow H^0_{et}(s,\Z^c(0))\simeq \Z$. 
Since $\widehat{H}^{2d-1}_{et}(\X,\Z(d))^0\simeq H^{2d-1}_{ar}(\X,\Z(d))^0$ 
is finite by Proposition \ref{propkey}, it is enough to show that 
$H^0_{et}(\X_s,\Z^c(0))^0$ is torsion, i.e. that the degree map 
$CH_0(\X_s)_{\Q}\simeq H^0(\X_s,\Q^c(0))\rightarrow \Q\simeq H^0(s,\Q^c(0))$ 
is an isomorphism. But every element of $CH_0(\X_s)_{\Q}$ is supported on a 
curve, and in this case the result is well-known, see for example 
\cite[Thm.\ 3.1, Prop.\ 6.2]{Geisser-Crelle}.
\end{proof}

\begin{prop}\label{proplast}
Assume that $\X$ has good or strictly semi-stable reduction and 
and $R\Gamma_W(\X_s,\Z^c(0))$ is a perfect complex of abelian groups. 
Then there is an exact sequence
$$0\to \widehat D \to \widehat{H}^{2d-1}_{et}(\X_K,\Z(d))^0
\stackrel{\tau}{\longrightarrow} \Tor H^{2d-1}_{ar}(\X_K,\Z(d))^0\to 0$$
with $\widehat D$ is the maximal divisible subgroup of 
$\widehat{H}^{2d-1}_{et}(\X_K,\Z(d))^0$, a torsion quotient of 
$CH_0(\X_s,2)_{\Q}$.
\end{prop}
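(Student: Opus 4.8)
The plan is to identify the two maps sitting in the exact sequence built in Theorem \ref{neutheorem} and the diagram of Proposition \ref{exact-sequences}, and to combine them so as to extract the stated short exact sequence. First I would set up the comparison map $\widehat{H}^{2d-1}_{et}(\X_K,\Z(d))^0 \to H^{2d-1}_{ar}(\X_K,\Z(d))^0$ coming from \eqref{forgettopmap2} of Proposition \ref{ar-et-hat-et-complete}; recall that after applying $(-)\widehat{\otimes}\widehat{\Z}$ it becomes an equivalence, so its kernel and cokernel are both divisible groups. Since $H^{2d-1}_{ar}(\X_K,\Z(d))^0 \simeq \pi_1^{ab}(\X_K)^{geo}_W$ is a \emph{finitely generated} abelian group by Corollary \ref{ICFT0}, the image of this map necessarily lands in the torsion subgroup $\Tor H^{2d-1}_{ar}(\X_K,\Z(d))^0$: the torsion-free quotient is detected by profinite completion, and $\widehat{H}^{2d-1}_{et}(\X_K,\Z(d))^0$ is torsion by Proposition \ref{propforgap}. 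This gives the map $\tau$ and explains why it targets the torsion subgroup.

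Next I would prove surjectivity of $\tau$ and identify its kernel $\widehat D$ as the maximal divisible subgroup. For surjectivity, the key point is that $\mathrm{disc}(H^{2d-1}_{ar}(\X_K,\Z(d))) \to H^{2d-1}_{et}(\X_K,\widehat{\Z}(d))$ is an isomorphism after profinite completion, and the torsion of the finitely generated group $H^{2d-1}_{ar}(\X_K,\Z(d))^0$ is precisely what survives modulo the divisible part on the $\widehat{H}_{et}$ side; I would make this explicit by comparing the mod-$m$ reductions, which agree by the defining equivalence $R\Gamma_{et}(\X_K,\Z(d))\otimes^L\Z/m \simeq R\widehat{\Gamma}_{et}(\X_K,\Z(d))^\delta \otimes^L \Z/m$ used already in Theorem \ref{neutheorem}. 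Since the source is torsion (Proposition \ref{propforgap}) while the relevant cokernel is divisible, the kernel $\widehat D$ of $\tau$ is divisible; being a subgroup of a torsion group it is the maximal divisible subgroup of $\widehat{H}^{2d-1}_{et}(\X_K,\Z(d))^0$.

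Finally I would identify $\widehat D$ with a torsion quotient of $CH_0(\X_s,2)_{\Q}$. Here I would trace $\widehat D$ back through the divisible group $D$ of Theorem \ref{neutheorem}, whose torsion was computed as $\cok^{2d-2}(\X_K)\otimes\Q/\Z$, and through the localization sequence of Proposition \ref{exact-sequences} written for $R\widehat{\Gamma}_{et}$. The degree $2d-2$ term, via $\Z(d)^{\X_s}=\Z^c(0)^{\X_s}[-2d]$ together with the higher-Chow interpretation, is governed by $H^{-2}_{et}(\X_s,\Z^c(0))\cong CH_0(\X_s,2)$; rationally this produces $CH_0(\X_s,2)_{\Q}$, and the comparison with the arithmetic side turns it into a torsion quotient after $\otimes\Q/\Z$. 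I expect the main obstacle to be exactly this last identification: pinning down the divisible part precisely as a quotient of $CH_0(\X_s,2)_{\Q}$ requires carefully matching the Weil-\'etale localization sequence for $\Z(d)$ on $\X_K$ with the cycle-complex computation on the special fiber, and controlling the torsion contributions at $p$ versus $\ell\neq p$. The finite-generation input from Corollary \ref{ICFT0} and the perfectness hypothesis on $R\Gamma_W(\X_s,\Z^c(0))$ are what make the torsion subgroup finite and hence make the divisibility of $\widehat D$ meaningful.
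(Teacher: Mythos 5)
Your plan has the same skeleton as the paper's proof (finite generation from Corollary \ref{ICFT0}, torsionness from Proposition \ref{propforgap}, and the rational fiber controlling kernel and cokernel), but three steps go wrong or are left open. First, the map $\tau$ is induced by the cofiber sequence \eqref{descent-triangle-top-redefined-n=d} of Proposition \ref{propeqarethat}, not by \eqref{forgettopmap2}, which runs in the opposite direction; and your claim that both kernel and cokernel of the comparison map are divisible is half false. Since the fiber $R\Gamma_{et}(\X_s,Ri^{!}\Q(d))[-1]$ is a complex of $\Q$-vector spaces, the long exact sequence $CH_0(\X_s,2)_{\Q}\rightarrow \widehat{H}^{2d-1}_{et}(\X_K,\Z(d))^{\delta}\rightarrow H^{2d-1}_{ar}(\X_K,\Z(d))^{\delta}\rightarrow CH_0(\X_s,1)_{\Q}$ shows the kernel is divisible (a quotient of a $\Q$-vector space) but the cokernel is only \emph{torsion-free} (a subgroup of a $\Q$-vector space); it cannot be divisible in general, since on the degree-$0$ parts it is finitely generated of rank $\mathrm{rank}_{\Z}(H_1^W(\X_s,\Z))-1$, which may be positive. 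Torsion-freeness of the cokernel is exactly what your surjectivity argument needs and what your ``mod-$m$ comparison'' sketch does not deliver: the source being torsion puts $\im\tau$ inside $\Tor H^{2d-1}_{ar}(\X_K,\Z(d))^0$, and the torsion-free cokernel forces $\im\tau$ to contain all of it.

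Second, your identification of $\widehat D$ is misrouted: $D=\ker\bigl(H_{et}^{2d-1}(\X_K,\Z(d))\rightarrow \widehat{H}_{et}^{2d-1}(\X_K,\Z(d))^{\delta}\bigr)$ from Theorem \ref{neutheorem} and $\widehat D=\ker\tau$ are kernels of \emph{different} maps, and tracing $\widehat D$ ``back through $D$'' establishes nothing; moreover the ``$\otimes\Q/\Z$'' step is spurious --- $\widehat D$ is directly a quotient of $CH_0(\X_s,2)_{\Q}$ read off from the long exact sequence above, and it is torsion simply because the ambient group is torsion by Proposition \ref{propforgap} (no separate analysis at $p$ versus $\ell\neq p$ is needed here). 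Finally, you are missing the reduction to the degree-$0$ subgroups: the long exact sequence controls the kernel and cokernel of the map between the \emph{full} groups, whereas the statement concerns the kernels of the push-forwards to $K^{\times}$. The paper bridges this by the morphism of short exact sequences with third terms inside $K^{\times}$, concluding that the kernels of the degree-$0$ and full maps coincide and that the cokernel of the degree-$0$ map injects into the (torsion-free) cokernel of the full map; without this step your argument, which works with the degree-$0$ groups throughout, does not connect to the cofiber sequence computation.
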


\begin{proof}
Since  $H^{2d-1}_{ar}(\X_K,\Z(d))^0$ is finitely generated 
by Corollary \ref{ICFT0}, the maximal divisible subgroup of 
$\widehat{H}^{2d-1}_{et}(\X_K,\Z(d))^0$ is contained in $\ker \tau$.
Moreover, $\widehat{H}^{2d-1}_{et}(\X_K,\Z(d))^0$
is torsion by Proposition \ref{propforgap}, so that $\hat D$ is
torsion and the image of $\tau$ 
is contained in the torsion subgroup $\Tor H^{2d-1}_{ar}(\X_K,\Z(d))^0$. 
In order to show that $\tau$ is surjective it suffices to show 
that $\cok \tau$ is torsion free, and to determine $\ker \tau$ it suffices to 
show that it is a quotient $CH_0(\X_s,2)_{\Q}$. 
Consider the morphism of short exact sequences of discrete abelian groups:
\[ \begin{CD}
0@>>> \widehat{H}^{2d-1}_{et}(\X_K,\Z(d))^{0\delta}@>>>
\widehat{H}^{2d-1}_{et}(\X_K,\Z(d))^{\delta}@>>> K^{\times}\\
@.@VVV @VVV@|\\
0@>>> H^{2d-1}_{ar}(\X_K,\Z(d))^{0\delta}@>>> H^{2d-1}_{ar}(\X_K,\Z(d))^{\delta}
@>>> K^{\times}.
\end{CD}
\]
We see that the  kernels of the left two maps are isomorphic, and that 
the cokernel of the left vertical map is subgroup of the cokernel of the 
middle vertical map.
Hence the long exact sequence associated with the cofiber sequence 
\eqref{descent-triangle-top-redefined-n=d} 
$$CH_0(\X_s,2)_{\Q}\rightarrow \widehat{H}^{2d-1}_{et}(\X_K,\Z(d))^{\delta} 
\rightarrow H^{2d-1}_{ar}(\X_K,\Z(d))^{\delta}\rightarrow CH_0(\X_s,1)_{\Q}$$
shows that the map 
$\widehat{H}^{2d-1}_{et}(\X_K,\Z(d))^0\rightarrow H^{2d-1}_{ar}(\X_K,\Z(d))^0$ 
has kernel a quotient of $CH_0(\X_s,2)_{\Q}$ and torsion free cokernel. 
\end{proof}

\begin{cor}
Under the hypothesis of the theorem 
there is an exact sequence of abelian groups
$$0\rightarrow  \mathcal K \rightarrow  H_{et}^{2d-1}(\X_K,\Z(d))^0\rightarrow 
\Tor H^{2d-1}_{ar}(\X_K,\Z(d))^0\to 0$$
where $\mathcal K\simeq D\oplus \hat D$ is the maximal divisible subgroup
of $H_{et}^{2d-1}(\X_K,\Z(d))^0$ and
$\Tor \mathcal K$ is the direct sum of 
$\cok (H_{et}^{2d-2}(\X_K,\Z(d))\to\widehat{H}_{et}^{2d-2}(\X_K,\Z(d)))\otimes\Q/\Z$
and a torsion quotient of $CH_0(\X_s,2)_{\Q}$.
\end{cor}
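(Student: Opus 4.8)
The plan is to splice the two short exact sequences already in hand. From the second sequence of Theorem~\ref{neutheorem} we have
$$0\rightarrow D\rightarrow H_{et}^{2d-1}(\X_K,\Z(d))^0\rightarrow \widehat{H}_{et}^{2d-1}(\X_K,\Z(d))^{0\delta}\rightarrow 0,$$
and from Proposition~\ref{proplast} we have
$$0\to \widehat D\to \widehat{H}^{2d-1}_{et}(\X_K,\Z(d))^0\stackrel{\tau}{\longrightarrow}\Tor H^{2d-1}_{ar}(\X_K,\Z(d))^0\to 0.$$
Writing $A:=H_{et}^{2d-1}(\X_K,\Z(d))^0$, $B:=\widehat{H}^{2d-1}_{et}(\X_K,\Z(d))^0$ (which, being torsion by Proposition~\ref{propforgap}, is discrete, so agrees with its discretization $B^\delta$) and $C:=\Tor H^{2d-1}_{ar}(\X_K,\Z(d))^0$, the map in the statement is the composite $A\twoheadrightarrow B\stackrel{\tau}{\twoheadrightarrow}C$, which is surjective since each factor is. I set $\mathcal K:=\ker(A\to C)$.

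The first step is to identify $\mathcal K$. Because $A\to B$ is surjective with kernel $D$, its restriction carries $\mathcal K$ onto $\ker\tau=\widehat D$ with kernel $D$, so I obtain a short exact sequence
$$0\to D\to \mathcal K\to \widehat D\to 0.$$
Both $D$ (by Theorem~\ref{neutheorem}) and $\widehat D$ (the maximal divisible subgroup of $B$, by Proposition~\ref{proplast}) are divisible, hence $\mathcal K$ is divisible as an extension of a divisible group by a divisible group. To see that $\mathcal K$ is the \emph{maximal} divisible subgroup of $A$, I note that $A/\mathcal K\simeq C=\Tor H^{2d-1}_{ar}(\X_K,\Z(d))^0$ is finite, since $H^{2d-1}_{ar}(\X_K,\Z(d))^0$ is finitely generated by Corollary~\ref{ICFT0}; a finite group has no nonzero divisible subgroup, so every divisible subgroup of $A$ lands in $\mathcal K$.

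Finally, since divisible abelian groups are injective, the sequence $0\to D\to \mathcal K\to \widehat D\to 0$ splits, giving $\mathcal K\simeq D\oplus \widehat D$. Taking torsion subgroups and using that $\widehat D$ is already torsion (a torsion quotient of $CH_0(\X_s,2)_{\Q}$ by Proposition~\ref{proplast}), I get $\Tor \mathcal K\simeq \Tor D\oplus \widehat D$; combined with the formula $\Tor D\simeq \cok(H_{et}^{2d-2}(\X_K,\Z(d))\to \widehat{H}_{et}^{2d-2}(\X_K,\Z(d)))\otimes\Q/\Z$ from Theorem~\ref{neutheorem}, this yields the stated description of $\Tor \mathcal K$. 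The argument is essentially formal once the two input sequences are available; the only points requiring care are the identification of $\mathcal K$ with the maximal divisible subgroup, which rests on the finiteness of $C$ from Corollary~\ref{ICFT0}, and the splitting, which uses injectivity of divisible groups.
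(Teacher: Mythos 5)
Your proof is correct and follows essentially the same route as the paper's: the paper also splices the two surjections via the kernel sequence $0\to \ker f\to \ker(g\circ f)\to \ker g\to 0$, splits it using divisibility (injectivity) of $D$, and deduces maximality of $\mathcal K$ from the finite generation of $H^{2d-1}_{ar}(\X_K,\Z(d))^0$ given by Corollary~\ref{ICFT0}. Your write-up merely spells out the details the paper leaves implicit (your parenthetical that torsion implies discrete is not true for general locally compact groups, but it is immaterial here since the whole argument takes place at the level of underlying abstract groups via the faithful functor $(-)^{\delta}$).
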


\begin{proof}
This follows from Theorem \ref{neutheorem} and Proposition \ref{proplast}
because for a composition 
$g\circ f$ of surjections we have an exact sequence 
$0\to \ker f\to \ker (g\circ f)\to \ker g \to 0$, which splits because
$D=\ker f$ is divisible. The divisible group $\mathcal K$ is maximal divisible
because $H^{2d-1}_{ar}(\X_K,\Z(d))^0$ is finitely generated.
\end{proof}

\section{Comparison to the motivic reciprocity map}
In this section we compare the composition 
$H_\M^{2d-1}(\X_K,\Z(d))\to H_{ar}^{2d-1}(\X_K,\Z(d)) $
to the classical reciprocity map. 
\subsection{Kato-homology}
Only for this section we are using the work 
of Jannsen-Saito and Kerz-Saito on Kato-homology in order to be able to make
statements about $H_\M^{2d-1}(\X_K,\Z(d))$, see \cite{Kato}, \cite{Kerz-Saito},
and \cite{Geisser10}. 

For $\X$ a separated scheme
of finite type over $\O_K$, we define the Kato complex $KC(\X)$ to be 
$\cone\large(\Z^c(\X)\to R\Gamma_\et(\X,\Z^c)\large)[-1]$, 
where $\Z^c=\Z^c(0)$ is Bloch's cycle complex of cycles of relative dimension $0$
viewed as a complex of \'etale sheaves. For an abelian group $A$
we let $KH_{i}(\X,A)$ be the homology of $KC(\X)\otimes^L A$ so that there is 
an exact sequence
$$ \cdots\to  KH_{i+2}(\X,A)\to  CH_0(\X,i,A)\to H_i^\et(\X,A)\to 
KH_{i+1}(\X,A)\to \cdots.$$
By \cite[Cor.\ 5.2, Cor.\ 5.5, Cor.\ 7.7]{Geisser10}, $KC(\X)\otimes^L \Z/m$ is 
quasi-isomorphic to the Kato-complexes discussed in \cite{Kato} and 
\cite{Kerz-Saito}. Kerz-Saito proved the following theorem 
\cite[Thm.\ 8.1]{Kerz-Saito}, which was conjectured by Kato 
\cite[Conj.\ 0.3, 5.1]{Kato},

\begin{thm}
1) If $Y$ is regular and connected scheme, proper over a finite field, 
then 
$KH_i(Y,\Z/m)=0$ for $i>0$ and $KH_0(Y,\Z/m)\cong \Z/m$ if either 
$m$ is prime to $p$, or if $i\leq 4$ and $Y$ is projective.

2) If $\X$ is regular and connected scheme, proper over $\O_K$,  
then $KH_i(\X,\Z/m)=0$ for all $i$ if either $m$ prime to $p$,
or if $i\leq 4$ and $\X$ is projective.
\end{thm}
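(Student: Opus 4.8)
The plan is to follow the descending induction on dimension of Kerz--Saito, reducing the vanishing of $KH_i$ to a Lefschetz-type statement for Kato homology and, at the bottom of the induction, to a one-dimensional base case governed by class field theory. I would treat the geometric statement (1) first and organize the arithmetic statement (2) around the same machinery. For (1), proceed by induction on $d=\dim Y$. The case $d\leq 1$ is the base case and follows from the known structure of the Brauer groups of the function field and residue fields of a proper curve over $\bF=\kappa(s)$: the relevant Galois cohomology groups have cohomological dimension too small to contribute in positive Kato degrees, while the identification $KH_0(Y,\Z/m)\cong \Z/m$ is exactly the reciprocity statement that the sum of local invariants of a Brauer class (or its higher analogue) vanishes.

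For the inductive step I would choose a hypersurface section $Z\subset Y$ which is again regular, or regular away from a closed set of smaller dimension, and which meets the given data properly. The key geometric input, and the crucial enabling tool, is the existence of such a $Z$: classical Bertini fails over the finite field $\bF$, so one invokes Poonen's Bertini theorem together with Gabber's refinement, producing for projective $Y$ hypersurfaces of large degree with the required regularity. One then establishes a Lefschetz statement $KH_i(Z,\Z/m)\stackrel{\sim}{\longrightarrow} KH_i(Y,\Z/m)$ in the relevant range of degrees, so that the vanishing for $Y$ is inherited from $Z$ and the induction closes.

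The coefficients split into two regimes, and the second is the main obstacle. For $m$ prime to $p$, the Kato complex is built from $\ell$-adic \'etale cohomology with $\ell\neq p$, where purity, Poincar\'e duality, and de Jong's alterations are available; this case runs unconditionally and with no restriction on the degree $i$. The $p$-primary part is genuinely deeper: the relevant cohomology is logarithmic de Rham--Witt cohomology, for which purity and the alteration-based d\'evissage are far more delicate, and the Lefschetz step requires resolution of singularities. Since resolution is known only in low dimension (Cossart--Piltant in dimension $\leq 3$), one controls the $p$-part only up to Kato degree $i\leq 4$ and under the projectivity hypothesis needed to produce the hypersurface sections; this is precisely the source of the two sets of hypotheses in the statement.

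Finally, for the arithmetic statement (2) I would run the same induction-on-dimension and Lefschetz machinery directly on $\X$, using a Bertini theorem adapted to the henselian local base $\O_K$ to produce good sections, the whole argument being organized by the localization triangle coming from $KC(\X)=\cone(\Z^c(\X)\to R\Gamma_\et(\X,\Z^c))[-1]$ and the displayed long exact sequence, which relates $\X$ to its special fiber $\X_s$ over $\kappa(s)$ and its generic fiber $\X_K$ over $K$; the geometric input from (1) enters through the fibers appearing in the d\'evissage. The base case is now the one-dimensional arithmetic situation, essentially $\Spec \O_L$ for finite $L/K$, where the stronger vanishing $KH_i=0$ for all $i$, including $i=0$, is exactly Hasse's local reciprocity law, the extra collapse of $KH_0$ reflecting the cohomological Hasse principle over the $p$-adic field $K$ as opposed to the identification $KH_0\cong\Z/m$ in the geometric proper case. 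The same prime-to-$p$ versus $p$-primary dichotomy, and hence the same hypotheses $i\leq 4$ and projectivity for the $p$-part, propagate through this comparison.
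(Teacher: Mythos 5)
The first thing to say is that the paper does not prove this statement at all: it is imported verbatim as Kerz--Saito \cite[Thm.\ 8.1]{Kerz-Saito} (conjectured by Kato \cite[Conj.\ 0.3, 5.1]{Kato}), and everything downstream (Corollary \ref{usekato}, the identification $KH_i(\X_K,A)\cong KH_i(\X_s,A)$ for $i\leq 2$) treats it as a black box. So the only benchmark for your proposal is the published Kerz--Saito proof, and at the level of architecture your sketch does reproduce it correctly: descending induction on dimension, hypersurface sections via the Bertini theorems of Poonen and Gabber over the finite base field, a Lefschetz-type statement for Kato homology, the unconditional prime-to-$p$ case, and the restrictions $i\leq 4$ and projectivity for the $p$-part traced to Cossart--Piltant resolution in dimension $\leq 3$. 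For the purposes of this paper, the ``proof'' is the citation, and your outline is a fair description of what sits behind it.

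As a proof, however, the proposal has concrete gaps, commensurate with the depth of the theorem. First, the Lefschetz step $KH_i(Z,\Z/m)\stackrel{\sim}{\to}KH_i(Y,\Z/m)$ is not an established tool you may invoke --- it \emph{is} the theorem, and in Kerz--Saito it is proved (for $\ell\neq p$) by a vanishing argument resting on Deligne's weight bounds from Weil II; your sketch never mentions weights, and without them the hypersurface-section induction does not close. Likewise the $p$-part is not merely ``delicate'': one needs the duality and purity theory for logarithmic de Rham--Witt sheaves (Gros--Suwa, and the Jannsen--Saito--Sato complexes) even to define the maps in the relevant range, and you give no substitute. Second, your base case for curves is misstated: $KH_1(Y,\Z/m)=0$ for a proper curve over a finite field is the injectivity of $\mathrm{Br}(k(Y))\to\bigoplus_v\mathrm{Br}(k(Y)_v)$, i.e.\ the classical Hasse principle for global function fields --- a genuine theorem, not a cohomological-dimension bound; $cd$ arguments only kill the terms in Kato degrees $\geq 2$. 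Third, for statement (2) the mechanism is not a Bertini induction rerun over $\O_K$: the arithmetic case is deduced from the finite-field case through the localization triangle relating $KC(\X)$, $KC(\X_s)$, $KC(\X_K)$ (the sequence displayed immediately after the theorem in this paper), together with purity/duality inputs over the local base (Gabber's absolute purity for $\ell\neq p$, and $p$-adic \'etale Tate twists \`a la Sato and Jannsen--Saito for the $p$-part), which is also where the stronger vanishing $KH_0(\X,\Z/m)=0$ comes from. If you intend an actual proof rather than a roadmap, the weight argument and the $p$-adic duality input are the missing ideas; otherwise the honest move, and the paper's own, is to cite \cite{Kerz-Saito}.
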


Since $\Z^c(\X)\otimes\Q \to R\Gamma_\et(\X,\Z^c)\otimes\Q$ is
a quasi-isomorphism, this implies:

\begin{cor}\label{usekato}
If $\X$ is a regular and connected scheme, projective over $\O_K$,  
then $KH_i(\X,\Z)=0$ for $i\leq 3$.
In particular, $H^{i}_\M(\X,\Z(d))\cong H^{i}_\et(\X,\Z(d))$
for $i\geq 2d-1$, and $H^{2d-2}_\M(\X,\Z(d))\to H^{2d-2}_\et(\X,\Z(d))$
is surjective.
\end{cor}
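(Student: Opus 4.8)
The plan is to deduce the integral vanishing $KH_i(\X,\Z)=0$ for $i\le 3$ from the mod-$m$ vanishing of Kerz--Saito together with the rational statement, and then read off the comparison of motivic and \'etale cohomology directly from the long exact sequence relating $KH$, $CH_0$ and \'etale homology.

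First I would record the two inputs. Rationally, the quasi-isomorphism $\Z^c(\X)\otimes\Q\to R\Gamma_\et(\X,\Z^c)\otimes\Q$ makes $KC(\X)$ acyclic after $\otimes\Q$; since $\Q$ is flat, $KH_i(\X,\Q)\simeq KH_i(\X,\Z)\otimes\Q=0$ for all $i$. Integrally mod $m$, part 2) of the Kerz--Saito theorem gives $KH_i(\X,\Z/m)=0$ for all $i\le 4$ and all $m$, using that $\X$ is projective. I would then combine these via the universal coefficient sequence attached to $KC(\X)\otimes^L\Z/m\Z$,
$$0\to KH_i(\X,\Z)/m\to KH_i(\X,\Z/m)\to {}_mKH_{i-1}(\X,\Z)\to 0.$$
For $i\le 4$ the middle term vanishes, so $KH_i(\X,\Z)$ is $m$-divisible for every $m$, hence divisible, and also ${}_mKH_{i-1}(\X,\Z)=0$, so $KH_{i-1}(\X,\Z)$ is $m$-torsion-free for all $m$.

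Taking the first consequence for $i\le 3$ and the second for $i\le 4$, I conclude that $KH_j(\X,\Z)$ is simultaneously divisible and torsion-free for $j\le 3$, i.e. a $\Q$-vector space. Hence $KH_j(\X,\Z)=KH_j(\X,\Z)\otimes\Q=KH_j(\X,\Q)=0$ for $j\le 3$, which is the first assertion. The comparison statements then follow by plugging into the exact sequence $\cdots\to KH_{i+2}(\X,\Z)\to CH_0(\X,i,\Z)\to H_i^\et(\X,\Z)\to KH_{i+1}(\X,\Z)\to\cdots$ and translating via $H^j_\M(\X,\Z(d))\cong CH_0(\X,2d-j)$ and $H^j_\et(\X,\Z(d))\cong H^\et_{2d-j}(\X,\Z)$, which come from $\Z(d)=\Z^c(0)[-2d]$. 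For $j\ge 2d-1$ one has $i=2d-j\le 1$, so both flanking terms $KH_{i+2}(\X,\Z)$ and $KH_{i+1}(\X,\Z)$ sit in degrees $\le 3$ and vanish, yielding the isomorphism $H^j_\M(\X,\Z(d))\cong H^j_\et(\X,\Z(d))$; for $j=2d-2$ one has $i=2$, so only $KH_3(\X,\Z)=0$ is available on the right, giving surjectivity of $H^{2d-2}_\M(\X,\Z(d))\to H^{2d-2}_\et(\X,\Z(d))$.

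I expect the only delicate point, rather than a genuine obstacle, to be the one-degree bookkeeping between hypothesis and conclusion: divisibility of $KH_j(\X,\Z)$ uses mod-$m$ vanishing in degree $j\le 4$, while torsion-freeness of $KH_j(\X,\Z)$ uses mod-$m$ vanishing in degree $j+1\le 4$, and it is precisely the conjunction of the two that forces a $\Q$-vector space in degrees $\le 3$ and lets the rational vanishing close the argument.
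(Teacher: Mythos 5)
Your proposal is correct and is exactly the paper's (implicit) argument: the paper derives the corollary from the Kerz--Saito mod-$m$ vanishing $KH_i(\X,\Z/m)=0$ for $i\le 4$ in the projective case together with the rational quasi-isomorphism $\Z^c(\X)\otimes\Q\to R\Gamma_\et(\X,\Z^c)\otimes\Q$, which is precisely your combination of the universal coefficient sequence (divisibility in degrees $\le 4$, torsion-freeness in degrees $\le 3$) with rational acyclicity, followed by the same reading of the long exact sequence relating $KH_*$, $CH_0(\X,*)$ and \'etale homology. Your degree bookkeeping ($i=2d-j\le 1$ for the isomorphisms, $i=2$ with $KH_3=0$ for the surjectivity) matches the paper's translation via $\Z(d)=\Z^c(0)[-2d]$.
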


Now let $\X$ be a regular scheme, proper scheme over $\O_K$ with generic fiber
$\X_K$. Note that $KC(\X_K)$ is
$\cone(\Z^c(-1)(\X_K)\to R\Gamma_\et(\X_K,\Z^c(-1))[1]$ 
because a scheme of dimension 
$n$ over $\O_K$ has generic fiber of dimension $n-1$
over $K$. We obtain an exact sequence 
$$ \cdots\to  KH_{i+1}(\X_K,A)\to  CH_{-1}(\X_K,i,A)\to H_{i-2}^\et(\X_K,A(-1))\to 
KH_{i}(\X_K,A)\to \cdots$$
or equivalently, for $d=\dim \X$,
\begin{equation}\label{katohom}
\cdots\to  KH_{i+1}(\X_K,A)\to  H^{2d-i}_\M(\X_K,A(d))\to H^{2d-i}_\et(\X_K,A(d))
\to KH_{i}(\X_K,A)\to \cdots.
\end{equation}
From the localization sequences we obtain a long exact sequence 
$$\cdots\to  KH_{i+1}(\X,A) \to KH_i(\X_K,A)\to KH_i(\X_s,A)\to KH_i(\X,A)\to \cdots$$
and the Corollary implies that for $\X$ a regular and connected
scheme, projective over $\O_K$, we have 
$$KH_i(\X_K,A)\cong KH_i(\X_s,A)$$
for $i\leq 2$. 

\begin{example}
Let $\X$ be $\Spec\; \O_K$. Then the exact sequence for $\Spec\; k$
becomes $CH_0(k)\cong H_0^\et(k,\Z)\cong \Z$, $H_{-1}^\et(k,\Z)\cong KH_0(k,\Z)=0$,
and $H_{-2}^\et(k,\Z)\cong KH_{-1}(k,\Z)\cong \Q/\Z$. For $\X_K$ we obtain
$H^1_\M(K,\Z(1)\cong H^1_\et(K,\Z(1))\cong K^\times$, 
$H^2_\M(K,\Z(1)\cong H^2_\et(K,\Z(1))\cong 0$, and
$H^3_\et(K,\Z(1))\cong KH_{-1}(K,\Z)\cong \Q/\Z$.
\end{example}

\subsection{Homology of the closed fiber} 
For a variety $Y$ over a finite field, the groups 
$H^W_i(Y,\Z^c)=H^{1-i}R\Gamma_W(Y,\Z^c)$ 
have been studied in \cite{Geisser-Crelle}, and for proper $Y$
in \cite{Geisser-Schmidt-17}. They are finitely generated for all 
$i$ and $Y$ if and only if the groups $CH_0(X,i)$ are torsion for all $i > 0$ 
and all smooth and proper $X$ \cite[Prop.\ 4.2]{Geisser-Crelle}.
If $Y$ is connected and proper, then the degree map induces an isomorphism 
$H_0^W(Y,\Z^c)\cong \Z$
\cite[Prop.\ 4.3]{Geisser-Crelle}.
The homology groups $H_i^K(Y,A)$ of the complex $C^K(Y)\otimes A$ defined in 
\cite[Def. 5.1]{Geisser-Crelle} are isomorphic to $KH_i(Y,\Z/m)$ for $A=\Z/m$.
For curves finite generation is known:

\begin{prop}\cite[Thm.\ 6.2, Prop.\ 6.3]{Geisser-Crelle}\label{katofinite}
If $C$ is a curve,  then $H^W_0(C, \Z^c)\cong \Z^{\pi_0(C)}$, there is a
short exact sequence of finitely generated groups
$$ 0\to CH_0(C)\to H^W_1(C, \Z^c ) \to H^K_1 (C, \Z) \to 0,$$
an isomorphism $CH_0(C, 1) \cong H^W_2(C, \Z^c)$ of finitely generated groups, 
and $H^W_{i}(C, \Z^c)$ vanishes for $i > 2$. In particular, $R\Gamma_W(C,\Z^c(0))$
is perfect. 
\end{prop}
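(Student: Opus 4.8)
The plan is to reduce to the case of a smooth, proper, connected curve and then compute directly, exploiting that the Weil group $W_k\cong\Z$ of the finite base field $k$ has cohomological dimension one. First I would dispose of the reducible and singular cases by dévissage. Since $R\Gamma_W(-,\Z^c)$ is additive in connected components, it suffices to treat a connected curve $C$, whose degree map will account for the factor $\Z^{\pi_0(C)}$. To pass from $C$ to its normalization $\tilde{C}\to C$ --- which is automatically smooth, so that \emph{no} resolution of singularities is needed here --- I would use the localization (abstract blow-up) sequence comparing $C$, $\tilde{C}$, the finite singular locus $S$, and its preimage $\tilde{S}$. The zero-dimensional contributions are what ultimately produce the Kato-homology term, and I expect this bookkeeping to be the main obstacle.

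For smooth proper connected $C$ one has $\Z^c(0)\simeq\Z(1)[2]\simeq\mathbb{G}_m[1]$, so the geometric complex $R\Gamma_{et}(\bar{C},\Z^c)$ is computed from the cohomology of $\mathbb{G}_m$ on $\bar{C}=C\times_k\bar{k}$. Using that $\bar{C}$ is proper and connected over an algebraically closed field, together with Tsen's theorem $\mathrm{Br}(\bar{C})=0$, I would obtain that $R\Gamma_{et}(\bar{C},\Z^c)$ has cohomology $\bar{k}^{\times}$ in degree $-1$ and $\mathrm{Pic}(\bar{C})$ in degree $0$, and vanishes otherwise; these are exactly $CH_0(\bar{C},1)$ and $CH_0(\bar{C})$.

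Applying $R\Gamma(W_k,-)$ and using $\mathrm{cd}(W_k)=1$ gives a short exact sequence
$$0\to H^{m-1}_{et}(\bar{C},\Z^c)_{F}\to H^m R\Gamma_W(C,\Z^c)\to H^m_{et}(\bar{C},\Z^c)^{F}\to 0,$$
where $F$ denotes Frobenius and $(-)_F$, $(-)^F$ are coinvariants and invariants. Evaluating with the indexing $H^W_i=H^{1-i}R\Gamma_W$: the term $H^W_2=H^{-1}R\Gamma_W$ equals $(\bar{k}^{\times})^{F}=k^{\times}$; the coinvariants $(\bar{k}^{\times})_F$ vanish because $\bar{k}^{\times}$ is divisible, so $H^W_1=H^0R\Gamma_W=\mathrm{Pic}(\bar{C})^{F}$ and $H^W_0=H^1R\Gamma_W=\mathrm{Pic}(\bar{C})_F$. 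Here the crucial geometric input is Lang's theorem: $F-1$ is surjective on the Jacobian $J(\bar{k})=\mathrm{Pic}^0(\bar{C})$, whence $J(\bar{k})_F=0$ and $\mathrm{Pic}(\bar{C})_F\cong\Z$ via the degree, giving $H^W_0\cong\Z$; dually $J(\bar{k})^{F}=J(k)$ is finite and, by F.\ K.\ Schmidt's existence of a degree-one divisor class, $\mathrm{Pic}(\bar{C})^{F}$ has rank one. Finally $H^{i}_{et}(\bar{C},\Z^c)=0$ for $i<-1$ forces $H^W_i=0$ for $i>2$.

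It remains to install the Kato-homology term and the identification of $CH_0(C)$ in the general case. I would combine the computation above with the triangle relating $\Z^c$ (whose hypercohomology yields the higher Chow groups $CH_0(C,i)$), $R\Gamma_W(C,\Z^c)$, and the Kato complex $C^K(C)$ (whose homology is $H^K_i$); the resulting long exact sequence, together with the identification of $CH_0(C)$ as the image of the cycle complex and the torsion/vanishing statements just established, yields the short exact sequence $0\to CH_0(C)\to H^W_1(C,\Z^c)\to H^K_1(C,\Z)\to 0$ and the isomorphism $CH_0(C,1)\cong H^W_2(C,\Z^c)$. Finite generation then follows from finiteness of $k^{\times}$ and $J(k)$ and from finite generation of $H^K_1(C,\Z)$; and since $R\Gamma_W(C,\Z^c)$ is thereby a bounded complex with finitely generated cohomology over $\Z$, which has global dimension one, it is automatically perfect. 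The main obstacle remains the dévissage from singular, reducible curves to the smooth connected case while tracking the connecting maps precisely enough that the zero-dimensional contributions assemble into $H^K_1(C,\Z)$ and the image of the relevant differential comes out torsion.
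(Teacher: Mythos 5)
The paper itself does not prove this proposition; it is quoted from \cite[Thm.\ 6.2, Prop.\ 6.3]{Geisser-Crelle}, so your attempt must be measured against the proof there, which runs through the niveau spectral sequence $E^1_{s,t}=\bigoplus_{x\in C_{(s)}}H^W_{s+t}(x,\Z^c)\Rightarrow H^W_{s+t}(C,\Z^c)$. For a curve this spectral sequence has only two columns $s=0,1$, hence degenerates at $E^2$: one row is the Gersten complex, whose $E^2$-terms are $CH_0(C)$ and $CH_0(C,1)$, and the other row is, by the very definition of the integral Kato complex \cite[Def.\ 5.1]{Geisser-Crelle}, the complex computing $H^K_i(C,\Z)$; the resulting extensions $0\to E^2_{0,n}\to H^W_n(C,\Z^c)\to E^2_{1,n-1}\to 0$ give all four assertions at once, with finite generation coming from class field theory for curves over finite fields. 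Your computation in the smooth, proper, connected case (Tsen, $\mathrm{cd}(W_k)=1$, divisibility of $\bar k^\times$, Lang's theorem) is correct, but observe that there $H^W_1(C,\Z^c)=\mathrm{Pic}(\bar C)^F=\mathrm{Pic}(C)=CH_0(C)$ (Hilbert 90 and $\mathrm{Br}(k)=0$) and hence $H^K_1(C,\Z)=0$: that case never sees the Kato term, so the entire content of the proposition sits in the step you twice defer as ``the main obstacle''.

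That step contains a genuine gap. The ``triangle relating $\Z^c$, $R\Gamma_W(C,\Z^c)$, and the Kato complex $C^K(C)$'' on which your final paragraph relies is not available: $C^K$ is not defined as a cone on $\Z^c(C)\to R\Gamma_W(C,\Z^c)$, and by \cite[Thm.\ 5.7]{Geisser-Crelle} one has in general only a six-term exact sequence $CH_0(Y,1)\to H^W_2(Y,\Z^c)\to H^K_2(Y,\Z)\to CH_0(Y)\to H^W_1(Y,\Z^c)\to H^K_1(Y,\Z)\to 0$ whose middle differential has torsion image; the existence of such a triangle for all $Y$ is equivalent to the conjectural vanishing $CH_0(Y,i)_{\Q}=0$ for $i>\dim Y$. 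For curves the needed exactness and the vanishing of that differential can be rescued, because $CH_0(C,i)=0$ for $i\geq 2$ (an easy localization computation) and because a two-column spectral sequence admits no $d_2$ --- but that is exactly the degeneration argument above, which you would have to supply rather than assume. Relatedly, your normalization d\'evissage cannot by itself manufacture $H^K_1(C,\Z)$: this group is defined through the groups $H^W_s(x,\Z^c)$ at \emph{all} points of $C$, and at a generic point $\eta$ the relevant term is $H^2_W(\kappa(\eta),\Z(1))\simeq H^1\bigl(W_k,(\kappa(\eta)\otimes_k\bar k)^{\times}\bigr)$ --- genuinely class-field-theoretic data that the conductor square, whose error terms are merely copies of $\Z$ coming from $S$ and $\tilde S$, never touches. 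Finally, finite generation of $H^K_1(C,\Z)$ and injectivity of $CH_0(C)\to H^W_1(C,\Z^c)$ are asserted rather than proved, whereas in the cited proof both fall out of the degeneration together with classical unramified class field theory for curves.
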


We are also using the following finite generation statement for Chow groups: 

\begin{prop}\label{Chowfinite}
For any proper scheme $Y$ over a finite field,
the group $CH_0(Y)^0=\ker( CH_0(Y)\to \Z^{\pi_0(Y)})$ is finite.
\end{prop}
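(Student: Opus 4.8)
The plan is to prove finiteness by induction on $d=\dim Y$, reducing along the way to the case where $Y$ is smooth, projective and connected over the finite field $\kappa$, where finiteness of $CH_0(Y)^0$ is a form of unramified class field theory. First I would make the standard reductions. Since $CH_0(Y)=CH_0(Y^{\mathrm{red}})$, and since both $CH_0$ and the degree map decompose over the connected components of $Y$, one reduces to $Y$ reduced and connected. The base case $d=0$ is immediate, since then the degree map $CH_0(Y)\to\Z^{\pi_0(Y)}$ is injective (multiplication by the residue degrees), so $CH_0(Y)^0=0$. By Chow's lemma, together with the localization sequence $CH_0(W)\to CH_0(Y)\to CH_0(V)\to 0$ for a closed $W\subseteq Y$ with open complement $V$, I may further assume $Y$ is projective, the difference being supported on a closed subset of dimension $<d$, which is controlled by the inductive hypothesis.

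Next, for $Y$ connected projective I would run a dévissage to the smooth case. The normalization $\nu\colon\tilde Y\to Y$ is finite and birational on each component, so $n=1$ and one gets the \emph{exact} identity $CH_0(Y)=\nu_\ast CH_0(\tilde Y)+\iota_\ast CH_0(Z)$, where $Z$ is the non-normal locus, of dimension $<d$. This reduces the problem to $\tilde Y$ (a disjoint union of integral normal schemes) modulo a lower-dimensional term treated by induction. For $Y$ integral normal I then take a de Jong alteration $h\colon Y'\to Y$ with $Y'$ smooth projective (over the perfect field $\kappa$) and $h$ generically finite of degree $n$. Over a dense open $U\subseteq Y$ with $\dim(Y\setminus U)<d$ where $h$ is finite flat, the projection formula gives $n[u]=h_\ast[h^{-1}(u)]$ for each closed point $u\in U$, whence $n\cdot CH_0(Y)\subseteq h_\ast CH_0(Y')+\iota_\ast CH_0(Y\setminus U)=:A$. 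By induction $CH_0(Y\setminus U)^0$ is finite, hence $CH_0(Y\setminus U)$ is finitely generated; and $CH_0(Y')$ is finitely generated because $CH_0(Y')^0$ is finite for the smooth projective $Y'$ (class field theory over finite fields, equivalently finiteness of $\pi_1^{ab}(Y')^{geo}$, cf. \cite{Kerz-Saito}). Thus $A\subseteq CH_0(Y)$ is finitely generated and $n\cdot CH_0(Y)\subseteq A$.

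It remains to upgrade this to genuine finiteness of $T:=CH_0(Y)^0$. For this I would first show that $T$ is torsion: every degree-zero zero-cycle is supported on finitely many closed points, which lie on a connected projective curve $C\subseteq Y$, so $T$ is the union of the images of the groups $CH_0(C)^0$, each of which is finite by Proposition \ref{katofinite}. Given that $T$ is torsion, the inclusions $n\cdot T\subseteq A\cap T\subseteq A_{\mathrm{tors}}$ show that $nT$ is finite, since the torsion subgroup of the finitely generated group $A$ is finite; equivalently $T/T[n]$ is finite.

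The main obstacle is precisely the finiteness of the $n$-torsion $T[n]\subseteq CH_0(Y)[n]$: because alterations are only generically finite of some degree $n>1$, the estimate $n\cdot CH_0(Y)\subseteq A$ does not by itself control $n$-torsion, and a torsion group $T$ with $nT$ finite need not be finite. I would close this gap by invoking the finiteness of the torsion subgroup of $CH_0(Y)$ over a finite field, which I would deduce from the finiteness of $CH_0(Y)/m$ and of $CH_0(Y)[m]$ for all $m$; these follow from the comparison of $CH_0$ with étale cohomology with finite coefficients via the Kato-complex computations of \cite{Geisser10} together with the vanishing theorem of \cite{Kerz-Saito}, the groups $H^\ast_{\et}(Y,\Z/m)$ being finite over a finite field. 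Combining $T/T[n]$ finite with $T[n]$ finite yields that $CH_0(Y)^0$ is finite, completing the induction.
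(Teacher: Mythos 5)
Your dévissage (reduce to projective via Chow's lemma, to normal via normalization, then alter to a smooth projective $Y'$, prove torsionness of $T=CH_0(Y)^0$ by spreading cycles over connected curves, and conclude $nT$ finite) is sound in outline, and you correctly identified the crux yourself: the finiteness of $T[n]$. But your proposed fix for that crux does not work as cited, and this is a genuine gap. At that stage $Y$ is only \emph{normal}, not regular --- indeed you used alterations precisely because $Y$ may be singular --- whereas the Kerz--Saito vanishing theorem (\cite[Thm.\ 8.1]{Kerz-Saito}, quoted in the paper) has regularity as a hypothesis. To bound $CH_0(Y)[m]$ and $CH_0(Y)/m$ via the Kato-homology sequence of \cite{Geisser10} you would need finiteness of $KH_2(Y,\Z/m)$ and $KH_3(Y,\Z/m)$ for the \emph{singular} proper $Y$. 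Prime to $p$ this can be extracted from Kerz--Saito by hyperenvelope/weight-complex arguments (and one could even invoke Gabber's refinement of de Jong to make the alteration degree prime to a fixed $\ell\neq p$), but the $p$-primary part of the alteration degree $n$ cannot be removed this way, and the $p$-part of Kato homology of singular proper varieties is not covered by the cited theorems: the ``$i\leq 4$ and projective'' case is again only for regular schemes. (A smaller slip in the same passage: finiteness of $CH_0(Y)[m]$ for every $m$ would in any case not give finiteness of the full torsion subgroup, though for your argument $T[n]$ alone would suffice.) As it stands, proving $T[n]$ finite for normal projective $Y$ is essentially as hard as the proposition itself, since by your own reductions the proposition is equivalent to it.

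The paper closes exactly this gap by a different idea: it never counts torsion. After a similar induction (using the right-exact abstract blow-up sequence for $CH_0$, with Chow's lemma and normalization), it uses that $\pi_1^{ab}(Y)^{geo}$ is \emph{finite} for normal proper $Y$, so it suffices to show that $CH_0(Y)^0\to\pi_1^{ab}(Y)^{geo}$ is injective; this injectivity is reduced to the surface case by Colliot-Th\'el\`ene's argument \cite[\S 9]{Kato-Saito}, using \cite{Seidenberg} to produce hyperplane sections, and for surfaces resolution of singularities is available, reducing to the smooth projective case of Kato--Saito \cite[Thm.\ 1]{Kato-Saito}. You would need to import an input of this strength to finish. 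Two secondary points: your ``connected projective curve through finitely many closed points'' step needs a Bertini-type statement valid over a finite field (Poonen-style Bertini with prescribed points, or a Seidenberg-type argument as in the paper); and Proposition \ref{katofinite} only gives finite generation of $CH_0(C)$, so finiteness of $CH_0(C)^0$ for a connected proper curve additionally uses that the degree map is rationally an isomorphism, cf.\ \cite[Thm.\ 3.1, Prop.\ 6.2]{Geisser-Crelle}.
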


\begin{proof}
For an abstract blow-up diagram
$$\begin{CD}
T'@>>> Y'\\
@VVV@VVV\\
T@>>>Y
\end{CD}$$
we obtain a map of exact sequences
$$\begin{CD}
CH_0(T')@>>> CH_0(T)\oplus CH_0(Y')@>>> CH_0(Y)@>>> 0\\
@VVV@VVV@VVV\\
\Z^{\pi_0(T')}@>>> \Z^{\pi_0(T)}\oplus \Z^{\pi_0(Y')}@>>> \Z^{\pi_0(Y)}
@>>> 0 .
\end{CD}$$
Since the vertical maps have finite cokernel, finiteness of the kernel
of the left two maps implies finiteness of the kernel of the right map.
Using Chow's Lemma and then normalizing to obtain $Y'$, we can argue by 
induction on the dimension of $Y$, to assume that $Y$ is normal and
projective, and then that $Y$ is connected. 
Since the abelianized geometric fundamental group 
$\pi_1^{ab}(Y)^{geo}$ is finite for normal $Y$, it suffices to show
that $CH_0^0(Y)\to \pi_1^{ab}(Y)^{geom}$ is injective. 
Now an argument of Colliot-Th\'el\`ene
\cite[\S 9]{Kato-Saito}, using \cite{Seidenberg} to produce hyperplane
sections, shows that it suffices to prove this for surfaces,
in which case we can use a resolution of singularities $Y'\to Y$ to
reduce to the smooth and proper case, which is known by 
Kato-Saito \cite[Thm.\ 1]{Kato-Saito}.
\end{proof}

\subsection{The reciprocity map 
$H_\M^{2d-1}(\X_K,\Z(d))\to H_{ar}^{2d-1}(\X_K,\Z(d)) $.}

\begin{thm}
If $\X_s$ is a strict normal crossing scheme and $R\Gamma_W(\X_s,\Z^c(0))$ is 
a perfect complex of abelian groups, then the reciprocity map 
$$H^{2d-1}_\M(\X_K,\Z(d))^0\to H^{2d-1}_{ar}(\X_K,\Z(d))^0$$
has finite image, and the prime to $p$ torsion part of the kernel 
is, up to finite groups, the image of $CH_0(\X_s,1)\to H^{2d-1}_\M(\X,\Z(d))$
tensored with $\Q/\Z'$.
\end{thm}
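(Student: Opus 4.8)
Write $\Tor'A$ for the prime-to-$p$ torsion subgroup of an abelian group $A$. The plan is to use the factorization of the reciprocity map as
$$\rho:H^{2d-1}_\M(\X_K,\Z(d))^0\xrightarrow{\ \alpha\ }H^{2d-1}_{et}(\X_K,\Z(d))^0\xrightarrow{\ \beta\ }H^{2d-1}_{ar}(\X_K,\Z(d))^0,$$
where $\alpha$ is the comparison map to \'etale motivic cohomology and $\beta$ is the composite $H^{2d-1}_{et}\to\widehat H^{2d-1}_{et}\to H^{2d-1}_{ar}$ analysed in Section 4. First I would dispose of the image: $\beta$ factors through $\widehat H^{2d-1}_{et}(\X_K,\Z(d))^0$, and by Proposition \ref{proplast} the map $\widehat H^{2d-1}_{et}(\X_K,\Z(d))^0\to\Tor H^{2d-1}_{ar}(\X_K,\Z(d))^0$ is surjective onto a group that is finite, since $H^{2d-1}_{ar}(\X_K,\Z(d))^0$ is finitely generated by Corollary \ref{ICFT0}. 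Hence $\im\rho$ is finite.

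Finiteness of the image already pins down the torsion of the kernel. Applying the left exact functor $\Tor'$ to $0\to\ker\rho\to H^{2d-1}_\M(\X_K,\Z(d))^0\xrightarrow{\rho}\im\rho\to0$ shows that $\ker\rho\hookrightarrow H^{2d-1}_\M(\X_K,\Z(d))^0$ is an isomorphism on $\Tor'$ up to a finite group, because $\Tor'\im\rho$ is finite. As $\Tor'K^\times$ is finite, the superscript $0$ may be dropped, and the problem reduces to the identification
$$\Tor' H^{2d-1}_\M(\X_K,\Z(d))\simeq J\otimes\Q/\Z'\quad\text{up to finite groups},\qquad J=\im\big(CH_0(\X_s,1)\to H^{2d-1}_\M(\X,\Z(d))\big).$$

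To carry this out I would combine two inputs. The first is the motivic localization sequence for the divisor $\X_s\subset\X$,
$$CH_0(\X_s,1)\xrightarrow{\ j\ }H^{2d-1}_\M(\X,\Z(d))\xrightarrow{\ r\ }H^{2d-1}_\M(\X_K,\Z(d))\xrightarrow{\ \partial\ }CH_0(\X_s),$$
so that $J=\ker r$; the second is the Kato-homology comparison \eqref{katohom} between $H^*_\M$ and $H^*_{et}$. By Kerz--Saito one has $KH_*(\X,\Z/m)=0$ for $m$ prime to $p$, and by Corollary \ref{usekato} the integral groups $KH_{\le3}(\X,\Z)$ vanish; with the localization sequence for Kato homology this identifies $\ker\alpha=\im(KH_2(\X_K,\Z))\simeq\im(KH_2(\X_s,\Z))$ and shows that $\alpha$ is an isomorphism on $\Tor'$ modulo groups built out of $KH_{\le2}(\X_s,-)$. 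I would then feed in the finiteness inputs: the group $H^{2d-1}_\M(\X,\Z(d))\simeq H^{2d-1}_{et}(\X,\Z(d))$ has finite $\Tor'$ (its $l$-adic realisations are finitely generated and vanish for almost all $l$, as in the proof of Proposition \ref{propkey}), and $CH_0(\X_s)^0$ is finite by Proposition \ref{Chowfinite}. Tracking these finiteness facts through the localization and universal-coefficient (Bockstein) sequences, the divisible prime-to-$p$ torsion surviving in $H^{2d-1}_\M(\X_K,\Z(d))$ is produced by the free part of $J$, giving $J\otimes\Q/\Z'$ up to finite groups, while the contribution coming from $CH_0(\X_s)$ through $\partial$ is finite.

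The hard part is this final identification. One must show that the connecting map --- equivalently the relevant Bockstein, taken in the colimit over $m$ prime to $p$ --- transports exactly the free part of $J$ to divisible torsion on the generic fibre, neither more nor less, and that the several a priori different torsion contributions reconcile: the Kato groups $KH_{\le2}(\X_s,-)$ entering through $\alpha$, the cokernel term $\cok(H^{2d-2}_{et}(\X_K,\Z(d))\to\widehat H^{2d-2}_{et}(\X_K,\Z(d)))\otimes\Q/\Z$ of Theorem \ref{neutheorem}, and the $CH_0(\X_s,2)_\Q$-quotient of Proposition \ref{proplast} must all collapse, modulo finite groups, to the single group $J\otimes\Q/\Z'$. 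Keeping the prime $p$ separated throughout (Kerz--Saito being available only prime to $p$) and absorbing every finite discrepancy into the ``up to finite groups'' clause is where the bookkeeping is most delicate.
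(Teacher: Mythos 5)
Your proof of finiteness of the image is correct, and it takes a genuinely different route from the paper's: you deduce it from the factorization through $\widehat{H}^{2d-1}_{et}(\X_K,\Z(d))^0$ together with Theorem \ref{neutheorem} and Proposition \ref{proplast}, whereas the paper reads it off the commutative diagram comparing the motivic and arithmetic localization sequences, using finiteness of $CH_0(\X_s)^0$ (Proposition \ref{Chowfinite}), of $H^{2d-1}_{ar}(\X,\Z(d))^0$ (Proposition \ref{propkey}) and of $\cok(f)$ (Proposition \ref{exact-sequences}). Your reduction of the kernel statement to the identification $\Tor' H^{2d-1}_\M(\X_K,\Z(d))\simeq J\otimes\Q/\Z'$ up to finite groups is also sound. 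But there the proposal stops: you explicitly defer ``the hard part,'' and the promised reconciliation of Bocksteins, of $KH_{\le 2}(\X_s,-)$, of the cokernel term of Theorem \ref{neutheorem}, and of the $CH_0(\X_s,2)_\Q$-quotient of Proposition \ref{proplast} is never carried out. Nor could it be carried out as described with the tools available here: integrally, the groups $KH_i(\X_s,\Z)$ are not known to be finitely generated outside the curve case (Proposition \ref{katofinite}), so ``up to finite groups'' bookkeeping through your $\alpha$-comparison has no justification. The central claim of the theorem is therefore not proved.

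The missing observation is that no such reconciliation is needed, because one should apply the $\Tor$-sequence over $\X$ rather than track torsion on the generic fibre. Since $CH_0(\X_s)^0$ is finite, the motivic localization sequence gives, modulo the Serre subcategory of finite groups, a short exact sequence
$$0\rightarrow \im w\rightarrow H^{2d-1}_\M(\X,\Z(d))^0\rightarrow H^{2d-1}_\M(\X_K,\Z(d))^0\rightarrow 0,$$
where $w\colon CH_0(\X_s,1)\to H^{2d-1}_\M(\X,\Z(d))^0$, and the six-term $\Tor(-,\Q/\Z')$ sequence attached to it reads
$$\Tor' H^{2d-1}_\M(\X,\Z(d))^0\rightarrow \Tor' H^{2d-1}_\M(\X_K,\Z(d))^0\rightarrow \im w\otimes\Q/\Z'\rightarrow H^{2d-1}_\M(\X,\Z(d))^0\otimes\Q/\Z'.$$
The outer terms are finite: by Corollary \ref{usekato} one has $H^{2d-1}_\M(\X,\Z(d))\cong H^{2d-1}_{et}(\X,\Z(d))$, and by \cite[Lemma 4.16]{Geisser-Morin-21} the groups $H^{i}_{et}(\X,\hat{\Z}'(d))$ are finite, which forces the prime-to-$p$ torsion of $H^{2d-1}_{et}(\X,\Z(d))$ to be finite and $H^{2d-1}_{et}(\X,\Z(d))\otimes\Q/\Z'$ to vanish. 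Combined with your (correct) observation that $\Tor'\ker\rho\simeq \Tor' H^{2d-1}_\M(\X_K,\Z(d))^0$ up to finite groups, this yields $\Tor'\ker\rho\simeq \im w\otimes\Q/\Z'$ in one stroke. The ``transport of the free part of $J$ to divisible torsion, neither more nor less'' that you flag as delicate is exactly the connecting map in this sequence and needs no separate analysis; Kato homology of the generic fibre, Theorem \ref{neutheorem} and Proposition \ref{proplast} play no role in the kernel computation.
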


The groups $CH_0(Y,i)$ are expected to be finitely generated for any
variety $Y$ over a finite field, see Prop. \ref{katofinite} for the 
case of curves. In this case, we obtain that
the prime to $p$ torsion part of the kernel has corank at most
the rank of  $CH_0(\X_s,i)$. 

\proof
We have the following commutative diagram:
$$\begin{CD}\
CH_0(\X_s,1)@>w>> H^{2d-1}_\M(\X,\Z(d))^0@>>>H^{2d-1}_\M(\X_K,\Z(d))^0@>>> 
\overset{finite}{CH_0(\X_s)^0}\\
@Vf_1VV@VsVV@VrVV@Vf_0VV\\
H_2^W(\X_s,\Z^c)@>>> \underset{finite}{H^{2d-1}_{ar}(\X,\Z(d))^0}
@>>>H^{2d-1}_{ar}(\X_K,\Z(d))^0@>f>> 
H_1^W(\X_s,\Z^c)^0
\end{CD}$$
The lower row is exact and $f$ has finite cokernel by Proposition 
\ref{exact-sequences}. The group $CH_0(\X_s)^0$ is finite by Proposition 
\ref{Chowfinite}. The group $H^{2d-1}_{ar}(\X,\Z(d))^0$ is finite by 
Proposition \ref{propkey} and $H^{2d-1}_{ar}(\X_K,\Z(d))^0$ is finitely 
generated of the same rank as that of $H_1^W(\X_s,\Z^c)^0$
by Corollary \ref{ICFT0}. It follows that $r$ has finite image, and 
that modulo the Serre subcategory of finite group, there is an exact sequence
$$CH_0(\X_s,1)\stackrel{w}{\to} H^{2d-1}_\M(\X,\Z(d))^0\to \ker r \to 0.$$
The $\Tor_i(-,\Q/\Z)$ sequence becomes (away from $p$)
$$ \Tor H^{2d-1}_\M(\X,\Z(d))^0 \to \Tor\ker r\to \im\;w\otimes\Q/\Z\to 
H^{2d-1}_\M(\X,\Z(d))^0\otimes\Q/\Z.$$
By \cite[Lemma 4.16]{Geisser-Morin-21}, 
the group  $H^i_\et(\X,\hat\Z'(d))$ is finite for all $i$, and using the coefficient
sequence this implies that $H^{i-1}_\et(\X,\Q/\Z'(d))$ is finite, 
hence the divisible subgroup $H^{i-1}_\et(\X,\Z(d))\otimes \Q/\Z'$ vanishes
and the prime to $p$-torsion of $H^i_\et(\X,\Z(d))$ is finite.
By Cor. \ref{usekato}, $H^i_\M(\X,\Z(d))\cong H^i_\et(\X,\Z(d))$ for any $i\geq 2d-1$, 
hence the same is true for motivic cohomology. 
Thus the outer terms in the short exact sequence are finite, and the prime to 
$p$ torsion of $\ker r$ is isomorphic to the $\im\; w\otimes\Q/\Z'$.
\endproof

\section{Curves}

Suppose that $\X/\O_K$ is a relative curve with good or strictly semi-stable 
reduction. 
By Proposition \ref{katofinite}, the hypothesis of Proposition \ref{proplast} 
is satisfied and we have  $CH_0(\X_s,2)_{\Q}$, hence Proposition \ref{proplast}
gives an isomorphism 
\begin{equation}\label{curveaaa}
\widehat{H}^{3}_{et}(\X_K,\Z(2))^0\simeq  \Tor H^{3}_{ar}(\X_K,\Z(2))^0,
\end{equation}
and the groups are finite because $H^{3}_{ar}(\X_K,\Z(2))^0$ is finitely generated.
We use this to relate our results to the result of
Saito \cite{Saito85}.

\begin{thm}\label{thmRec1}
Let $\X/\O_K$ be a relative curve with has good or strictly semi-stable reduction. 
Then the reciprocity map
$$\rec:SK_1(\X_K)\rightarrow \pi_1^{ab}(\X_K)$$ 
factors as follows:
\begin{eqnarray*}
SK_1(\X_K)\simeq H^{3}_{et}(\X_{K},\Z(2))&\stackrel{s}{\twoheadrightarrow}& 
\widehat{H}^{3}_{et}(\X_{K},\Z(2))\\
&\stackrel{i}{\hookrightarrow}& H^{3}_{ar}(\X_{K},\Z(2))\stackrel{\sim}{\rightarrow} \pi_1^{ab}(\X_K)_{W}\rightarrow \pi_1^{ab}(\X_K).
\end{eqnarray*}
The map $i$ is injective,  
$s$ is surjective, and the kernel of $s$ is the 
maximal divisible subgroup of $SK_1(\X_K)$. Its torsion is isomorphic 
to $$\cok (H_{et}^{2}(\X_K,\Z(2))\to
\widehat{H}_{et}^{2}(\X_K,\Z(2))^\delta)\otimes\Q/\Z.$$\end{thm}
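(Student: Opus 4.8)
The plan is to assemble the factorization from the results already established and then to recognize the composite as Saito's map $\rec$. The starting point is the standard identification $SK_1(\X_K)\simeq H^{3}_{\M}(\X_K,\Z(2))$ together with its comparison with \'etale cohomology for curves, giving $SK_1(\X_K)\simeq H^{3}_{et}(\X_K,\Z(2))$; under this identification Saito's reciprocity map is the canonical map
\[
H^{3}_{et}(\X_K,\Z(2))\longrightarrow H^{3}_{et}(\X_K,\widehat{\Z}(2))\simeq \pi_1^{ab}(\X_K),
\]
the last isomorphism being the classical one recorded in Theorem \ref{ICFT}. The three interior arrows of the asserted factorization are then already in hand: $s$ is the canonical surjection of Theorem \ref{neutheorem} in the case $d=2$; $i$ is the map $\widehat{H}^{3}_{et}(\X_K,\Z(2))\to H^{3}_{ar}(\X_K,\Z(2))$ coming from the cofiber sequence \eqref{descent-triangle-top-redefined-n=d}; and $H^{3}_{ar}(\X_K,\Z(2))\stackrel{\sim}{\to}\pi_1^{ab}(\X_K)_{W}$ is $\underline{\rec}$ from Theorem \ref{ICFT}. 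The final arrow is the dense inclusion $\pi_1^{ab}(\X_K)_{W}\hookrightarrow\pi_1^{ab}(\X_K)$.

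First I would check that the long composite equals $\rec$. By Proposition \ref{ar-et-hat-et-complete}, the map \eqref{forgettopmap2} fits $H^{3}_{ar}(\X_K,\Z(2))^{\delta}$ into a commutative triangle over $H^{3}_{et}(\X_K,\widehat{\Z}(2))$; combined with the definitions of $s$ and $i$, this identifies the composite of $s$, $i$, and $H^{3}_{ar}(\X_K,\Z(2))^{\delta}\to H^{3}_{et}(\X_K,\widehat{\Z}(2))$ with the canonical map displayed above. It then remains to note that $\underline{\rec}^{\delta}$ followed by $\pi_1^{ab}(\X_K)_{W}\hookrightarrow\pi_1^{ab}(\X_K)$ agrees with $H^{3}_{ar}(\X_K,\Z(2))^{\delta}\to H^{3}_{et}(\X_K,\widehat{\Z}(2))\simeq\pi_1^{ab}(\X_K)$, which is exactly the commutativity of the square relating $\pi_1^{ab}(\X_K)_W^{\delta}$, $\pi_1^{ab}(\X_K)^{\delta}$ and the two cohomology groups in the proof of Theorem \ref{ICFT}. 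Splicing the two commutative diagrams yields the factorization of $\rec$.

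For the structural statements, surjectivity of $s$, together with the fact that $\ker s=D$ is divisible with $\Tor D\simeq\cok(H^{2}_{et}(\X_K,\Z(2))\to\widehat{H}^{2}_{et}(\X_K,\Z(2))^{\delta})\otimes\Q/\Z$, is precisely Theorem \ref{neutheorem} for $d=2$. To see that $D$ is the \emph{maximal} divisible subgroup it suffices to show that $\widehat{H}^{3}_{et}(\X_K,\Z(2))$ is reduced: this group is an extension of a subgroup of $K^{\times}$ by $\widehat{H}^{3}_{et}(\X_K,\Z(2))^{0}$, which is finite by \eqref{curveaaa}; since $K^{\times}$ has no nonzero divisible subgroup, any divisible subgroup would land in the finite kernel and hence vanish. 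Finally, injectivity of $i$ follows by comparing the norm sequences of $\widehat{H}^{3}_{et}(\X_K,\Z(2))$ and $H^{3}_{ar}(\X_K,\Z(2))$: on the degree-zero parts $i$ restricts to the isomorphism \eqref{curveaaa} onto $\Tor H^{3}_{ar}(\X_K,\Z(2))^{0}$, and on the quotients it is the inclusion $\widetilde{K^{\times}}\hookrightarrow K^{\times}$ of the image $\widetilde{K^{\times}}$ of the norm map from $\widehat{H}^{3}_{et}(\X_K,\Z(2))$, so a four-lemma argument gives injectivity on the whole group.

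Most of this is bookkeeping among the four cohomology theories and their norm sequences, controlled entirely by the cited results. The genuine obstacle is the compatibility asserted in the second paragraph: one must confirm that the classically defined Saito map really is the canonical map $H^{3}_{et}(\X_K,\Z(2))\to H^{3}_{et}(\X_K,\widehat{\Z}(2))\simeq\pi_1^{ab}(\X_K)$ and that this matches $\underline{\rec}$ under Proposition \ref{ar-et-hat-et-complete} and Theorem \ref{ICFT}. Once this identification is secured, the factorization and all the structural assertions follow formally.
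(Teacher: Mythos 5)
Your proposal is correct and follows essentially the same route as the paper's proof: identify $\rec$ with the canonical map $H^{3}_{et}(\X_K,\Z(2))\to H^{3}_{et}(\X_K,\widehat{\Z}(2))\simeq \pi_1^{ab}(\X_K)$, factor it through $\widehat{H}^{3}_{et}$ and $H^{3}_{ar}$ via Proposition \ref{ar-et-hat-et-complete}, splice with the commutative square from the proof of Theorem \ref{ICFT}, quote Theorem \ref{neutheorem} for $s$, and deduce injectivity of $i$ from \eqref{curveaaa}. Your only additions are to make explicit two steps the paper compresses --- the four-lemma argument comparing the sequences over $\widetilde{K^{\times}}\hookrightarrow K^{\times}$ behind ``injectivity of $i$ follows from \eqref{curveaaa}'', and the reducedness of $\widehat{H}^{3}_{et}(\X_K,\Z(2))$ needed for \emph{maximality} of the divisible kernel --- both of which are correct and consistent with the paper's intent.
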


\begin{proof}
We have canonical isomorphisms
$$SK_1(\X_K)\simeq H_\M^3(\X_K,\Z(2))\simeq H_{et}^3(\X_K,\Z(2))$$
and the reciprocity map $\rec$ coincides with the composition
$$SK_1(\X_K)\simeq H_{et}^3(\X_K,\Z(2))\rightarrow 
H_{et}^{3}(\X_K,\widehat{\Z}(2))\stackrel{\sim}{\rightarrow} \pi_1^{ab}(\X_K) $$
where the last isomorphism is given by \'etale duality with finite coefficients. 
The middle map
factors as
\begin{equation*}
H_{et}^3(\X_K,\Z(2))\rightarrow \widehat{H}_{et}^3(\X_K,\Z(2))\rightarrow 
H_{ar}^3(\X_K,\Z(2))\rightarrow H_{et}^{3}(\X_K,\widehat{\Z}(2))
\end{equation*}
by construction, where the maps are 
continuous if $H_{et}^3(\X_K,\Z(2))$ is endowed with the discrete topology and 
both $H_{et}^{3}(\X_K,\widehat{\Z}(2))$ and $\pi_1^{ab}(\X_K) $ are endowed 
with their natural profinite topology.  
As in the proof of Theorem \ref{ICFT}, we have a commutative square
\[ \xymatrix{
 H_{ar}^{3}(\X_K,\Z(2))\ar[r]^{\sim}\ar[d]^{}& \pi_1^{ab}(\X_K)_W\ar[d]\\
 H_{et}^{3}(\X_K,\widehat{\Z}(2))\ar[r]^{\sim}& \pi_1^{ab}(\X_K)
}
\]
which we now consider as a  commutative square of locally compact groups. 
The factorization of the reciprocity map claimed in the theorem follows. 
The surjectivity of $s$ and the description of its kernel
is Theorem \ref{neutheorem}.
The injectivity of the map 
$\widehat{H}^{3}_{et}(\X_{K},\Z(2))\rightarrow H^{3}_{ar}(\X_{K},\Z(2))$ 
follows from \eqref{curveaaa}. 
\end{proof}

Removing the contribution coming from the base, we can obtain a version
with finitely generated groups. 
Define 
$$SK_1(\X_K)^0:=\mathrm{Ker}(SK_1(\X_K)\rightarrow K^{\times});$$
and similarly
$$\pi_1^{ab}(\X_K)^{geo}:=\mathrm{Ker}(\pi_1^{ab}(\X_K)\rightarrow G_K^{ab}).$$

\begin{thm}\label{thmreccurve}
Assume that $\X$ is a relative with good or strictly semi-stable reduction. 
Then the reciprocity map
$$SK_1(\X_K)^0\rightarrow \pi_1^{ab}(\X_K)^{geo}$$ 
factors as
$$SK_1(\X_K)^0\stackrel{s^0}{\twoheadrightarrow} 
\widehat{H}^{3}_{et}(\X_{K},\Z(2))^0\stackrel{i^0}{\hookrightarrow} 
H^{3}_{ar}(\X_{K},\Z(2))^0\stackrel{\sim}{\longrightarrow} 
\pi_1^{ab}(\X_K)_W^{geo}\hookrightarrow \pi_1^{ab}(\X_K)^{geo}$$
where 
\begin{itemize}
\item $H^{3}_{ar}(\X_{K},\Z(2))^0$ is finitely generated.
\item $s^0$ is surjective and $\ker s^0$ the maximal divisible subgroup of 
$SK_1(\X_K)^0$.
\item $i^0$ is the inclusion of the torsion subgroup 
of $H^{3}_{ar}(\X_{K},\Z(2))^0$.
\item The last map is the inclusion of a finitely generated group into its 
profinite completion.
\end{itemize}
\end{thm}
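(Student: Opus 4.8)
The plan is to obtain this as the degree-zero refinement of Theorem \ref{thmRec1}, restricting the entire factorization to the kernels of the pushforward maps to the base. First I would observe that the reciprocity map is functorial for the structure morphism $\X_K\to\Spec(K)$, so that the norm map $SK_1(\X_K)\to K^{\times}$ corresponds under $\rec$ to the projection $\pi_1^{ab}(\X_K)\to G_K^{ab}$; consequently $\rec$ carries $SK_1(\X_K)^0$ into $\pi_1^{ab}(\X_K)^{geo}$, and each arrow in the factorization of Theorem \ref{thmRec1} commutes with the maps to $K^{\times}$ (respectively $W_K^{ab}$, $G_K^{ab}$) and hence restricts to the corresponding degree-zero subgroups. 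This already produces the claimed four-term factorization of $\rec$ on degree-zero parts, with $s^0$, $i^0$, the isomorphism of Corollary \ref{ICFT0}, and the final inclusion.

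It then remains to verify the four stated properties, all of which follow from results established earlier. The finite generation of $H^{3}_{ar}(\X_K,\Z(2))^0$ and the middle isomorphism onto $\pi_1^{ab}(\X_K)_W^{geo}$ are exactly Corollary \ref{ICFT0} applied with $d=2$. For $s^0$, the second exact sequence of Theorem \ref{neutheorem} gives
$$0\to D\to SK_1(\X_K)^0\to \widehat{H}_{et}^{3}(\X_K,\Z(2))^{0\delta}\to 0$$
with $D$ divisible, so that $s^0$ is surjective with kernel $D$, and the torsion of $D$ is identified as in that theorem.

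To see that $D$ is the \emph{maximal} divisible subgroup, I would use that the quotient $\widehat{H}_{et}^{3}(\X_K,\Z(2))^0$ is finite by \eqref{curveaaa}: the image in this finite group of any divisible subgroup of $SK_1(\X_K)^0$ is a finite divisible group, hence trivial, so every divisible subgroup lies in $D$. The same finiteness, again via \eqref{curveaaa}, identifies $\widehat{H}_{et}^{3}(\X_K,\Z(2))^0$ with $\Tor H^{3}_{ar}(\X_K,\Z(2))^0$; since $H^{3}_{ar}(\X_K,\Z(2))^0$ is finitely generated, this exhibits $i^0$ as the inclusion of the torsion subgroup. Finally, the last arrow is the inclusion of the finitely generated group $\pi_1^{ab}(\X_K)_W^{geo}$ into its profinite completion $\pi_1^{ab}(\X_K)^{geo}$, which is compatible with the profinite-completion statement of Theorem \ref{ICFT} together with Proposition \ref{ar-et-hat-et-complete}.

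The main obstacle is bookkeeping rather than mathematics: one must check that the commutative square relating $H^{3}_{ar}$, $\pi_1^{ab}(\X_K)_W$ and their profinite completions from the proof of Theorem \ref{thmRec1} restricts cleanly to the geometric parts, and that the kernels and cokernels of the vertical maps line up so that $s^0$ retains surjectivity and $i^0$ retains injectivity with the stated image. Once this compatibility is confirmed, the statement is a direct transcription of the absolute factorization into degree-zero form, the finite generation of Corollary \ref{ICFT0} and the finiteness of \eqref{curveaaa} being precisely what upgrades the divisibility and torsion assertions.
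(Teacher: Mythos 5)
Your proposal is correct and takes essentially the same route as the paper, whose proof of Theorem \ref{thmreccurve} consists precisely of combining Theorems \ref{neutheorem} and \ref{thmRec1} with Proposition \ref{proplast} (via \eqref{curveaaa}, using Proposition \ref{katofinite} to kill $CH_0(\X_s,2)_{\Q}$) together with Corollary \ref{ICFT0}. Your spelled-out checks --- restricting the factorization to degree-zero parts, maximality of the divisible kernel via finiteness of $\widehat{H}^{3}_{et}(\X_K,\Z(2))^0$, and identifying $i^0$ with the torsion inclusion --- simply make explicit what the paper's one-line proof leaves implicit.
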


\begin{proof}
This follows by combining Theorems \ref{neutheorem}, \ref{thmRec1}
and Proposition \ref{proplast}.
\end{proof}

\end{document}